\newcommand{\R}{\mathbb{R}}
\newcommand{\N}{\mathbb{N}}
\newcommand{\ep}{\varepsilon}
\newcommand{\pa}{\partial}
\newcommand{\lr}[1]{{}\langle{}#1{}\rangle{}}
\newtheorem{theorem}{Theorem}[section]
\newtheorem{lemma}[theorem]{Lemma}
\newtheorem{proposition}[theorem]{Proposition}
\newtheorem{corollary}[theorem]{Corollary}
\theoremstyle{remark}
\newtheorem{remark}{Remark}[section]
\theoremstyle{definition}
\newtheorem{definition}[theorem]{Definition}
\numberwithin{equation}{section}
\begin{document}
\begin{center}
\Large{{\bf
Supersolutions for parabolic equations with unbounded diffusion 
and 
\\
its applications to 
some classes of parabolic and hyperbolic equations
}}
\end{center}

\vspace{5pt}

\begin{center}
Motohiro Sobajima%
\footnote{
Department of Mathematics, 
Faculty of Science and Technology, Tokyo University of Science,  
2641 Yamazaki, Noda-shi, Chiba, 278-8510, Japan,  
E-mail:\ {\tt msobajima1984@gmail.com}}
and 
Yuta Wakasugi%
\footnote{
Graduate School of Science and Engineering, Ehime University, 
3, Bunkyo-cho, Matsuyama, Ehime, 790-8577, Japan, 
E-mail:\ {\tt wakasugi.yuta.vi@ehime-u.ac.jp}.}
\end{center}

\newenvironment{summary}{\vspace{.5\baselineskip}\begin{list}{}{%
     \setlength{\baselineskip}{0.85\baselineskip}
     \setlength{\topsep}{0pt}
     \setlength{\leftmargin}{12mm}
     \setlength{\rightmargin}{12mm}
     \setlength{\listparindent}{0mm}
     \setlength{\itemindent}{\listparindent}
     \setlength{\parsep}{0pt}
     \item\relax}}{\end{list}\vspace{.5\baselineskip}}
\begin{summary}
{\footnotesize {\bf Abstract.}
This paper is concerned with 
supersolutions to parabolic equations 
of the form 
\begin{equation}\label{abst:eq1}
\pa_t U (x,t)-D(x)\Delta U(x,t)=0, 
\quad (x,t)\in \R^N\times (0,\infty),	
\end{equation}
where $D\in C(\R^N)$ is positive. 
Under the behavior of 
the diffusion coefficient $D$ 
with polynomial order at spatial infinity, 
a family of supersolutions to \eqref{abst:eq1} 
with slowly decaying property at spatial infinity 
is provided. As a first application, 
weighted $L^2$ type decay estimates for 
the initial-boundary value problem of 
the parabolic equation
\begin{equation}\label{abst:eq2}
\begin{cases}
\pa_tv(x,t)-D(x)\Delta v(x,t)=0, 
& (x,t)\in \Omega\times (0,\infty),	
\\
v (x,t)=0, 
& (x,t)\in \pa\Omega\times (0,\infty),	
\\
v(x,0)=v_0(x), 
& x\in \Omega
\end{cases}
\end{equation}
are proved. The second application is 
the study of the exterior problem of 
wave equations with space-dependent damping terms of type 
\begin{equation}\label{abst:eq3}
\begin{cases}
\pa_t^2 u (x,t)-\Delta u(x,t)+a(x)\pa_tu(x)=0, 
& (x,t)\in \Omega\times (0,\infty),	
\\
u (x,t)=0, 
& (x,t)\in \pa\Omega\times (0,\infty),	
\\
u(x,0)=u_0(x), 
\ 
\pa_tu(x,0)=u_1(x), 
& x\in \Omega.
\end{cases}
\end{equation}
By using supersolution provided above with $a(x)=D(x)^{-1}$, 
energy estimates for \eqref{abst:eq3} with polynomial weight
and diffusion phenomena are shown. There is a slight improvement comparing to the one in \cite{SoWa19_CCM} about the assumption of the initial data. 
}
\end{summary}

{\footnotesize{\it Mathematics Subject Classification}\/ (2010): Primary: 
35K20 %
Secondary: 
35L20, %
35B40. %
}

{\footnotesize{\it Key words and phrases}\/: 
parabolic equation with unbounded diffusion, 
supersolution, 
damped wave equation, space-dependent damping, 
diffusion phenomena.
%
}
\section{Introduction}
In this paper we consider 
positive supersolutions of the following parabolic equation 
\begin{align}\label{eq:D-heat}
\begin{cases}
\pa_t U (x,t)-D(x)\Delta U(x,t)=0, 
&(x,t)\in \R^N\times (0,\infty),	
\\
U (x,0)=w(x),
&x\in \R^N,	
\end{cases}
\end{align}
where $N\in \N$ and the diffusion coefficient $D$ 
satisfies
\begin{align}\label{eq:D-ass}
D\in C(\R^N), \quad 
D(x)>0,\quad 
\lim_{|x|\to \infty}\Big(|x|^{-\alpha}D(x)\Big)=D_0 
\end{align} 
for some constants $\alpha\in \R$ and $D_0>0$. That is, 
we assume 
$D_1\lr{x}^{\alpha}\leq D(x)\leq D_2\lr{x}^{\alpha}$ 
for some $D_1, D_2>0$, where $\lr{x}=\sqrt{1+|x|^2}$.
The initial value $w(x)$ also behaves polynomially at spatial infinity.
We emphasize that the diffusion coefficient $D$ 
is allowed to be unbounded ($\alpha>0$) and also decreasing ($\alpha<0$).

The notion of subsolutions and supersolutions for 
elliptic and parabolic problems are well-known. 
They are essentially provided from the maximum principle 
of the corresponding problems as typified by the positivity of the solutions.
Nowadays, so-called supersolution-subsolution methods for 
such problems are understood as powerful tools 
to analyse the existence and uniqueness of solutions 
and to discover profiles of them.
(For the detail, 
see e.g., Gilbarg--Trudinger \cite{GTbook} for elliptic problems
and 
Quittner--Souplet \cite{SQbook} for parabolic problems, and their references therein). 

Here we shall focus our attention 
to the notion of supersolutions for parabolic problems. 
For instance, in the case of the following problem
\begin{align}\label{nheat}
\begin{cases}
\pa_tu(x,t)-\Delta u(x,t) =\big(u(x,t)\big)^p
&(x,t)\in \Omega\times (0,T),
\\
u(x,t)=0, 
& (x,t)\in \pa\Omega\times (0,T),
\\
u(x,0)=u_0(x)\geq 0,
& x\in \Omega,
\end{cases}
\end{align}
the function $\overline{u}$ $\in C^2(\Omega\times[0,T))$
is called supersolution of \eqref{nheat} if 
\[
\begin{cases}
\pa_t \overline{u}(x,t) -\Delta \overline{u}(x,t) \geq \big(\overline{u}(x,t)\big)^p
&(x,t)\in \Omega\times (0,T),
\\
\overline{u}(x,t)\geq 0, 
& (x,t)\in \pa\Omega\times (0,T),
\\
\overline{u}(x,0)\geq u_0(x),
& x\in \Omega.
\end{cases}
\]
Once we find a supersolution of \eqref{nheat}, 
then we can immediately obtain an estimate for solutions in a pointwise sense. 
By using this structure, Weissler \cite{Weissler84} proved 
single-point blowup of solutions to \eqref{nheat}.
The structure of supersolutions also can be found in 
the study of nonlinear parabolic systems.
We only quote Levine \cite{Levine91}, 
Lu-Sleeman \cite{LS94} and Ishige--Kawakami--Sier\.{z}\c{e}ga \cite{IKS16}. 

The references stated above suggest that 
a criterion of construction of supersolutions (subsolutions) 
for general problem 
enables us to reach a further detailed analysis 
of profiles of solutions. 

The purpose of the present paper is to 
give a family of supersolutions to the problem \eqref{eq:D-heat}
and to discuss applications of
them to weighted $L^2$ type decay estimates for 
initial-boundary value problems of parabolic equations 
and diffusion phenomena for the 
hyperbolic equations with space-dependent damping. 

To state our main result, 
we would give the definition of 
supersolutions to \eqref{eq:D-heat} as follows. 
\begin{definition}
For given initial value $w\in C(\Omega)$, a function $\overline{U}$ is said to be 
a supersolution of \eqref{eq:D-heat} if $\overline{U}\in C^2(\overline{\Omega}\times [0,\infty))$ satisfies 
\[
\begin{cases}
\pa_t \overline{U}(x,t) -D(x)\Delta \overline{U}(x,t) \geq 0, 
&(x,t)\in \Omega\times (0,\infty),
\\
\overline{U}(x,t)\geq 0, 
& (x,t)\in \pa\Omega\times (0,\infty),
\\
\overline{U}(x,0)\geq w(x),
& x\in \Omega.
\end{cases}
\]
\end{definition}

We are interested in supersolutions with 
polynomially decaying property at spatial infinity. 
Therefore, we do not handle supersolutions similar to Gaussian function $t^{-N/2}
\exp(-|x|^2/4t)$. 
The following assertion is the main result of this paper, 
which deals with the supersolutions of diffusion equation \eqref{eq:D-heat} 
with polynomially decaying property at spatial infinity. 
\begin{theorem}\label{thm:main}
Assume that $D$ satisfies \eqref{eq:D-ass} for $\alpha\in (-\infty,\min\{2,N\})$.
Then 
for $\sigma\in (0,\frac{N-\alpha}{2})$, there exist a supersolution
$\overline{U}_{D,\sigma}\in C^2(\overline{\Omega}\times[0,\infty))$ 
with $w(x)=\lr{x}^{-2\sigma}$
and positive constants $c_{D,\sigma}$, $C_{D,\sigma}$ 
and $C'_{D,\sigma}$
such that 
\begin{gather}
\label{eq:two-sided}
c_{D,\sigma}
\left(1+t+\lr{x}^{2-\alpha}\right)^{-\frac{2\sigma}{2-\alpha}}
\leq 
\overline{U}_{D,\sigma}(x,t)
\leq 
C_{D,\sigma}
\left(1+t+\lr{x}^{2-\alpha}\right)^{-\frac{2\sigma}{2-\alpha}},
\\
\label{eq:deri-bdd}
|\pa_t\overline{U}_{D,\sigma}(x,t)|
\leq 
C'_{D,\sigma}
\left(1+t+\lr{x}^{2-\alpha}\right)^{-\frac{2\sigma}{2-\alpha}-1}.
\end{gather} 
\end{theorem}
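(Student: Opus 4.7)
The plan is to construct $\overline{U}_{D,\sigma}$ in explicit self-similar form and to reduce the supersolution inequality to an ODE of Kummer type in a single scaling variable. Set $\rho(x):=\lr{x}^{2-\alpha}$ and
\[
\gamma:=\frac{2\sigma}{2-\alpha},\qquad n:=\frac{N-\alpha}{2-\alpha},\qquad \kappa:=D_0(2-\alpha)^2,
\]
so that $\sigma<(N-\alpha)/2$ translates to $0<\gamma<n$. I would seek
\[
\overline{U}_{D,\sigma}(x,t) := C\,(T_0+t)^{-\gamma}\,\phi\!\left(\tfrac{\rho(x)}{\kappa(T_0+t)}\right),
\]
with constants $C>0$, $T_0\geq 1$ and a positive decreasing profile $\phi\in C^2([0,\infty))$ to be chosen.

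Using the chain rule together with the identities
\[
\Delta\rho=(2-\alpha)\lr{x}^{-\alpha}\bigl[N-\alpha|x|^2/\lr{x}^{2}\bigr],\qquad |\nabla\rho|^2=(2-\alpha)^2\lr{x}^{-2\alpha}|x|^2,
\]
and the rewritings $|x|^2=\lr{x}^2-1$ and $D(x)=D_0\lr{x}^{\alpha}+(D(x)-D_0\lr{x}^{\alpha})$, a direct calculation recasts the left-hand side of the supersolution inequality as
\[
(T_0+t)^{\gamma+1}\bigl(\pa_t\overline{U}_{D,\sigma}-D(x)\Delta\overline{U}_{D,\sigma}\bigr) = -C\,\mathcal{L}\phi(z) + C\,R(x,t),\qquad z:=\tfrac{\rho(x)}{\kappa(T_0+t)},
\]
where the Kummer-type principal operator is
\[
\mathcal{L}\phi(z) := z\phi''(z)+(n+z)\phi'(z)+\gamma\phi(z),
\]
and $R$ collects three residuals: the defect of $D(x)\lr{x}^{-\alpha}$ from $D_0$, the $\lr{x}^{-\alpha}$ correction from $|x|^2=\lr{x}^2-1$, and the $\alpha\lr{x}^{-2}$ correction inside $\Delta\rho$. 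Each is controlled by a product of $\phi(z)+|z\phi'(z)|+|z\phi''(z)|$ with a bounded coefficient that tends to $0$ as $|x|\to\infty$ or as $T_0\to\infty$.

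I would then take $\phi(z):=M(\gamma,n-\ep,-z)$, a Kummer confluent hypergeometric function with slightly shifted lower parameter, with $\ep>0$ chosen so small that $n-\ep>\gamma$. The defining identity $z\phi''+(n-\ep+z)\phi'+\gamma\phi=0$ yields
\[
\mathcal{L}\phi(z)=\ep\,\phi'(z)\leq 0,
\]
so $-\mathcal{L}\phi\geq 0$ pointwise, producing a strictly positive ``profit'' to absorb $R$. Positivity and monotonicity of $\phi$, together with the bounds $|z\phi'(z)|+|z\phi''(z)|\leq C_0\,\phi(z)$ and the asymptotics $\phi(0)=1$ and $\phi(z)\sim c_{\infty}z^{-\gamma}$ as $z\to\infty$, all follow from the Euler integral representation
\[
M(\gamma,n-\ep,-z)=\tfrac{\Gamma(n-\ep)}{\Gamma(\gamma)\Gamma(n-\ep-\gamma)}\int_0^1 e^{-zs}\,s^{\gamma-1}(1-s)^{n-\ep-\gamma-1}\,ds.
\]
Taking $T_0$ large enough makes the residual $R$ dominated by $\ep|\phi'(z)|$ uniformly in $(x,t)$, and enlarging $C$ ensures both the initial inequality $\overline{U}_{D,\sigma}(x,0)\geq \lr{x}^{-2\sigma}$ on $\overline{\Omega}$ (via the matching tail $\phi(z)\asymp z^{-\gamma}$) and $\overline{U}_{D,\sigma}\geq 0$ on $\pa\Omega$.

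The two-sided bound \eqref{eq:two-sided} follows by inspecting the two $z$-regimes: if $z\lesssim 1$ then $\phi(z)\asymp 1$ and $(T_0+t)^{-\gamma}\asymp (1+t+\rho(x))^{-\gamma}$, while if $z\gtrsim 1$ then $\phi(z)\asymp z^{-\gamma}$ and $(T_0+t)^{-\gamma}\phi(z)\asymp \rho(x)^{-\gamma}\asymp (1+t+\rho(x))^{-\gamma}$. The derivative bound \eqref{eq:deri-bdd} is immediate from $\pa_t\overline{U}_{D,\sigma}=-C(T_0+t)^{-\gamma-1}[\gamma\phi(z)+z\phi'(z)]$ together with $|z\phi'(z)|\leq C_0\phi(z)$. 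The hardest step is the quantitative absorption of $R$ by the profit $-\mathcal{L}\phi=\ep|\phi'|$: because $|\phi'(z)|/\phi(z)\to 0$ like $1/z$ as $z\to\infty$, the coefficient of $R$ must decay at least as fast in that regime, and the simultaneous tuning of $\ep$, $T_0$ and $C$ has to accommodate both the bounded-$z$ and the large-$z$ parts of the domain; the sharp uniform estimates on $\phi,\phi',\phi''$ provided by the Euler representation of $M(\gamma,n-\ep,\cdot)$ are indispensable at this point.
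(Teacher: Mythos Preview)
The overall strategy---a self-similar ansatz with a Kummer profile, splitting the PDE into a Kummer-type principal operator plus a remainder---is exactly the one the paper uses. The gap is in how you control the remainder. With the explicit weight $\rho(x)=\lr{x}^{2-\alpha}$, your identity actually gives
\[
\frac{(T_0+t)^{\gamma+1}}{C}\bigl(\partial_t\overline{U}-D\Delta\overline{U}\bigr)
=-\mathcal{L}\phi(z)+R(x,t),
\qquad
R=\bigl(n-p(x)A(x)\bigr)\phi'(z)+\bigl(1-p(x)B(x)\bigr)z\phi''(z),
\]
where $p(x)=D(x)\lr{x}^{-\alpha}/D_0$, $A(x)=(N-\alpha|x|^2/\lr{x}^2)/(2-\alpha)$ and $B(x)=|x|^2/\lr{x}^2$. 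Your claim that $R$ is controlled by $\phi+|z\phi'|+|z\phi''|$ times a coefficient vanishing as $|x|\to\infty$ or $T_0\to\infty$ is not correct: the first term in $R$ carries $\phi'$, not $z\phi'$, and $\phi'(z)\to\phi'(0)\neq 0$ as $z\to 0$, so enlarging $T_0$ does nothing for it. Absorbing $(n-pA)\phi'$ into the profit $\ep|\phi'|$ would force $p(x)A(x)\ge n-\ep$ for all $x$, hence $\ep\ge n-\inf_x p(x)A(x)$; since the hypothesis gives only $p(x)\to 1$ with no rate and merely $p\ge D_1/D_0$, this is incompatible with $\ep<n-\gamma$ whenever $D_1/D_0$ is small and $\gamma$ is close to $n$. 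Likewise $p(x)B(x)\le 1$ need not hold.

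The paper's proof differs precisely at this point: instead of $\lr{x}^{2-\alpha}$ it uses a function $A_\ep(x)$ built from the actual coefficient $a=1/D$ so that $(1-\ep)a\le\Delta A_\ep\le(1+\ep)a$ and $|\nabla A_\ep|^2/(aA_\ep)\le(2-\alpha)/(N-\alpha)+\ep$ hold \emph{pointwise} (Lemma~\ref{lem_A_ep}). With this weight the two error coefficients in the analogue of $R$ have definite signs by construction, and the supersolution inequality follows directly from $\varphi'<0$, $\varphi''>0$---no absorption is needed. This Poisson-type preparation of the spatial weight, encoding the behaviour of $D$ on compact sets and not just at infinity, is the missing ingredient in your argument.
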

\begin{remark}
Here we point out about the case $\sigma=\frac{N-\alpha}{2}$. 
Formally, we consider the equation $\pa_tu=|x|^{\alpha}\Delta u$. 
This has the (super)solution
\[
(1+t)^{-\frac{N-\alpha}{2-\alpha}}\exp\left(-\frac{|x|^{2-\alpha}}{(2-\alpha)^2(1+t)}\right).
\]
In contrast, if $\sigma < \frac{N-\alpha}{2}$,
then the corresponding self-similar solution 
of $\pa_tu=|x|^{\alpha}\Delta u$ has the following form
\[
(1+t)^{-\frac{2\sigma}{2-\alpha}}
\exp\left(-\frac{|x|^{-\alpha}}{(2-\alpha)^2(1+t)}\right)
M\left(\frac{N-\alpha-2\sigma}{2-\alpha},\frac{N-\alpha}{2-\alpha};\frac{|x|^{-\alpha}}{(2-\alpha)^2(1+t)}\right)
\approx
\left(1+t+\frac{|x|^{2-\alpha}}{(2-\alpha)^2}\right)^{-\frac{2\sigma}{2-\alpha}},
\]
where $M(\cdot,\cdot;\cdot)$ is the Kummer confluent hypergeometric function 
(see Definition \ref{def:M}, below, and also \cite{SoWa19_CCM}). 
In view of the explicit representation of self-similar solution, 
the condition $\sigma \leq \frac{N-\alpha}{2}$ 
is required to ensure the positivity of self-similar solutions
and 
the restriction $\sigma < \frac{N-\alpha}{2}$ 
is necessary for the polynomially decaying profile of self-similar solutions.
\end{remark}

As a first application to supersolutions in Theorem \ref{thm:main}, 
we provide a weighted $L^2$-type decay estimate 
for the initial-boundary value problem of \eqref{eq:D-heat}, that is, 
\begin{align}\label{eq:D-heat-inibdry}
\begin{cases}
\pa_t v (x,t)-D(x)\Delta v(x,t)=0, 
&(x,t)\in \Omega\times (0,\infty),	
\\
v(x,t)=0, 
&(x,t)\in \pa \Omega\times (0,\infty),	
\\
v (x,0)=v_0(x), &x\in \Omega
\end{cases}
\end{align}
under the assumption \eqref{eq:D-ass}.
Here we consider the problem \eqref{eq:D-heat-inibdry} in an exterior domain 
in $\R^N$. 
In this case, the Friedrichs extension of 
the corresponding elliptic operator $D\Delta$ generates 
an analytic semigroup $\{T(t)\}_{t\geq 0}$ on a weighted $L^2$ space (see Section 4). 
We say that $v=T(t)v_0$ is the solution of \eqref{eq:D-heat-inibdry}.  
The properties of solutions to this problem, for  instance $L^p$-$L^q$ type estimates, can be found in literature 
(see e.g., 
Ioku--Metafune--Sobajima--Spina \cite{IMSS16}, 
Sobajima--Wakasugi \cite{SoWa16_JDE, SoWa18_ADE}). 
In the present paper, we will give 
the following assertion. 
\begin{proposition}\label{prop:heat-decay}
Assume that $D$ satisfies \eqref{eq:D-ass} with $\alpha\in (-\infty,\min\{2,N\})$	. 
Let $v_0$ satisfy $\lr{x}^{\sigma-\frac{\alpha}{2}}v_0\in L^2(\Omega)$
for some $\sigma\in (0,\frac{N-\alpha}{2})$. 
Then the solution $v$ of the problem \eqref{eq:D-heat-inibdry} satisfies 
\[
\big\|\lr{x}^{-\frac{\alpha}{2}}v(t)\big\|_{L^2(\Omega)}
\leq 
C(1+t)^{-\frac{\sigma}{2-\alpha}}\big\|\lr{x}^{\sigma-\frac{\alpha}{2}}v_0\big\|_{L^2(\Omega)}, \quad t\geq 0
\]
for some positive constant $C$ independent of $v_0$. 
\end{proposition}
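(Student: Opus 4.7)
The plan is to control the weighted $L^2$-type energy
\[
E(t) := \int_\Omega \frac{v(x,t)^2}{\overline{U}_{D,\sigma}(x,t)\,D(x)}\,dx
\]
by using the supersolution $\overline{U}_{D,\sigma}$ from Theorem \ref{thm:main} as a reciprocal weight. The two-sided bound \eqref{eq:two-sided} combined with the polynomial behaviour of $D$ shows that at $t=0$ the weight $(\overline{U}_{D,\sigma}(\cdot,0)\,D)^{-1}$ is pointwise comparable to $\lr{x}^{2\sigma-\alpha}$, so $E(0)$ is equivalent to $\|\lr{x}^{\sigma-\alpha/2}v_0\|_{L^2(\Omega)}^2$; for $t>0$, $(\overline{U}_{D,\sigma}(\cdot,t)\,D)^{-1}$ is bounded below by a constant multiple of $(1+t)^{2\sigma/(2-\alpha)}\lr{x}^{-\alpha}$, so $E(t)$ dominates $(1+t)^{2\sigma/(2-\alpha)}\|\lr{x}^{-\alpha/2}v(t)\|_{L^2(\Omega)}^2$. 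Any monotonicity $E(t)\leq E(0)$ therefore directly yields the proposition.

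To establish this monotonicity, I would multiply the equation $\pa_t v = D\Delta v$ by $v/(\overline{U}_{D,\sigma}\,D)$ and integrate over $\Omega$. Using $v|_{\pa\Omega}=0$, two successive integrations by parts eliminate all boundary contributions; after rearranging via the algebraic identity relating $\int \overline{U}_{D,\sigma}|\nabla(v/\overline{U}_{D,\sigma})|^2\,dx$ to $\int |\nabla v|^2/\overline{U}_{D,\sigma}\,dx$ plus $\Delta\overline{U}_{D,\sigma}$ and $|\nabla\overline{U}_{D,\sigma}|^2$ correction terms (a completion of the square in $\nabla(v/\overline{U}_{D,\sigma})$), the computation reduces to
\[
\frac{dE}{dt} = -2\int_\Omega \overline{U}_{D,\sigma}\left|\nabla\left(\frac{v}{\overline{U}_{D,\sigma}}\right)\right|^2 dx - \int_\Omega \frac{v^2}{\overline{U}_{D,\sigma}^{2}}\left(\frac{\pa_t \overline{U}_{D,\sigma}}{D}-\Delta\overline{U}_{D,\sigma}\right) dx.
\]
The first term is manifestly non-positive, and the second is non-positive precisely because $\overline{U}_{D,\sigma}$ is a supersolution of \eqref{eq:D-heat}. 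Hence $E(t)\leq E(0)$, and combining with the two comparisons above completes the argument.

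The main technical obstacle is justifying the integrations by parts. The weight $(\overline{U}_{D,\sigma}\,D)^{-1}$ grows like $\lr{x}^{2\sigma-\alpha}$ at spatial infinity, whereas the solution $v$ is a priori only given by the analytic semigroup on the natural weighted $L^2$-space for $D\Delta$, and it is not obvious that $v(t)$ belongs to the weighted space in which $E(t)$ is finite. I would dispose of this by a standard density and truncation argument: approximate $v_0$ by functions in $C_c^\infty(\Omega)$, for which the corresponding solutions are smooth and decay rapidly enough at infinity so that every manipulation is legitimate, prove the estimate in the regularized setting, and pass to the limit using that the right-hand side is continuous in $\|\lr{x}^{\sigma-\alpha/2}v_0\|_{L^2(\Omega)}$.
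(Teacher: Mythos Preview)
Your formal computation is exactly the one the paper isolates in Remark~1.2, and the conclusion $E(t)\le E(0)$ together with the two-sided bound \eqref{eq:two-sided} does yield the proposition. So the core idea is correct and coincides with the paper's.

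The difference is entirely in the justification of the integrations by parts, and here your proposal leaves the essential difficulty open. You write that for $v_0\in C_c^\infty(\Omega)$ the corresponding solution is ``smooth and decays rapidly enough at infinity so that every manipulation is legitimate.'' But this is precisely the point that is \emph{not} clear: the semigroup $T(t)$ is only known to map into $D(L)\subset L^2_{d\mu}\cap\dot H^1(\Omega)$, and for a diffusion coefficient behaving like $|x|^{\alpha}$ (possibly unbounded) no off-the-shelf Gaussian bound tells you that $T(t)v_0$ lies in the stronger weighted space where $E(t)<\infty$, let alone that the boundary terms at infinity vanish. The paper flags exactly this issue in Remark~1.2 (``this is not clear because of the regularity of $v$ to verify the computation with integration by parts'') and does not attempt to prove spatial decay of $T(t)v_0$.

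Instead, the paper's proof (Proposition~4.2) performs a \emph{domain} truncation rather than a data truncation: it solves the problem on the bounded sets $\Omega_n=\Omega\cap B(0,n)$, where standard elliptic regularity gives $D(L_n)=H^2(\Omega_n)\cap H^1_0(\Omega_n)$ and your computation of $\frac{d}{dt}E(t)$ is rigorously valid with no tail terms. This yields the weighted estimate uniformly in $n$. The passage $n\to\infty$ is then handled not by any pointwise decay argument but by the Trotter--Kato approximation theorem: one checks resolvent convergence $(I-L_n)^{-1}f\to(I-L)^{-1}f$ in $L^2_{d\mu}$ (via monotonicity from the maximum principle and a uniform $H^1$ bound), which gives $T_n(t)f\to T(t)f$ and allows the limit in the estimate. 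This route bypasses the question of spatial decay of $T(t)v_0$ altogether. Your density-in-data idea could in principle be made to work, but it would require independently establishing weighted-space smoothing for $T(t)$, which is at least as hard as the proposition itself.
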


\begin{remark}
Proposition \ref{prop:heat-decay} comes from the following formal computation 
via integration by parts twice: for positive $\Phi\in C^2(\R^{N+1})$, 
\begin{align*}
\frac{d}{dt}
\int_\Omega \frac{v^2}{\Phi D}\,dx
&=
2\int_\Omega \frac{v\pa_t v}{\Phi D}\,dx
-\int_\Omega \frac{v^2\pa_t\Phi}{\Phi^2 D}\,dx
\\
&=
2\int_\Omega \frac{v\Delta v}{\Phi}\,dx
-\int_\Omega \frac{v^2\pa_t\Phi}{\Phi^2 D}\,dx
\\
&=
-2\int_\Omega \left|\nabla \left(\frac{v}{\Phi}\right)\right|^2\Phi\,dx
-\int_\Omega \frac{v^2(\pa_t\Phi-D\Delta \Phi)}{\Phi^2 D}\,dx.
\end{align*}
From the above estimate, one can find 
a weighted $L^2$-estimate of $v$ through 
the $L^\infty$-estimate of supersolution $\Phi$. 
However, this is not clear because of the regularity of $v$ 
to verify the computation with integration by parts. 
Instead of the difficulty stated above, 
the desired estimate is proved via semigroup approach as an application of 
the Trotter-Kato approximation theorem. 
\end{remark}

The second application is the analysis of 
wave equations with 
space-dependent damping term:
\begin{equation}\label{eq:dw}
\begin{cases}
\pa_t^2 u (x,t)-\Delta u(x,t)+a(x)\pa_tu(x)=0, 
& (x,t)\in \Omega\times (0,\infty),	
\\
u (x,t)=0, 
& (x,t)\in \pa\Omega\times (0,\infty),	
\\
u(x,0)=u_0(x), 
\ 
\pa_tu(x,0)=u_1(x), 
& x\in \Omega, 
\end{cases}
\end{equation}
where $a(x)=D(x)^{-1}$, that is, $a\in C(\overline{\Omega})$ satisfying 
\begin{equation}\label{ass:a_intro}
a(x)>0, \quad \lim_{|x|\to \infty}\Big(|x|^{\alpha}a(x)\Big)=a_0\left(=D_0^{-1}\right). 
\end{equation}
If $a\equiv 1$, then \eqref{eq:dw} is the usual 
damped wave equation. This is motivated by the derivation 
by Cattaneo \cite{Ca58} and Vernotte \cite{Ve58} as the approximation of heat equation with finite propagation property. 
Actually, it is known that 
if $u_0$ and $u_1$ are compactly supported smooth function, 
then 
so-called diffusion phenomena occurs, 
that is, the solution of \eqref{eq:dw} 
for $t\gg 1$ can be approximated by the solution of 
\begin{equation}\label{eq:dw-heat}
\begin{cases}
a(x)\pa_tv(x)-\Delta v(x,t)=0, 
& (x,t)\in \Omega\times (0,\infty),	
\\
v (x,t)=0, 
& (x,t)\in \pa\Omega\times (0,\infty),	
\\
v(x,0)=u_0(x)+a(x)^{-1}u_1(x),
& x\in \Omega 
\end{cases}
\end{equation}
(which is equivalent to \eqref{eq:D-heat-inibdry}) in the sense of 
\begin{gather*}
\Big\|\sqrt{a(\cdot)}\Big( u(\cdot,t)-v(\cdot,t)\Big)\Big\|_{L^2(\Omega)}
=O(t^{-\frac{N-\alpha}{2(2-\alpha)}-\eta}), 
\\
\big\|\sqrt{a(\cdot)}v(\cdot,t)\big\|_{L^2(\Omega)}=O(t^{-\frac{N-\alpha}{2(2-\alpha)}})
\end{gather*}
as $t\to \infty$ for some $\eta>0$
(for detail, see \cite{ChHa03, HsLi92, Ik02, Kar00, Ni03MathZ, Nis16, RaToYo16, Wa14JHDE, YaMi00}).
Recently in Sobajima--Wakasugi \cite{SoWa19_CCM}, 
diffusion phenomena for the slowly decaying initial data
has been proved with $a(x)=|x|^{-\alpha}$; 
note that this result is only valid for the damping 
with special structure of homogeneous polynomial type. 
The diffusion phenomena for general damping satisfying the behavior 
at the spatial infinity \eqref{ass:a_intro} is open so far. 

The consequence of the second application of supersolutions in Theorem \ref{thm:main} is diffusion phenomena for \eqref{eq:dw} 
under the assumption \eqref{ass:a_intro}. 

Before stating the result of diffusion phenomena, 
we provide weighted energy estimates for \eqref{eq:dw}.  
\begin{theorem}\label{thm:wee}
Assume that $a(x)$ satisfies \eqref{ass:a_intro} with 
$\alpha\in [0,1)$
and 
the pair $(u_0,u_1)\in H^1_0(\Omega)\times L^2(\Omega)$ satisfies
\[
E_0
=
\int_{\Omega}\Big(|\nabla u_0(x) |^2+ \big(u_1(x)\big) ^2\Big)\lr{x}^{2\sigma+\alpha}\,dx
+
\int_{\Omega} \big(u_0(x)\big)^2\lr{x}^{2\sigma-\alpha}\,dx<\infty.
\]
with $\sigma\in (0,\frac{N-\alpha}{2})$.
Let $u$ be a solution of \eqref{eq:dw} (in a weak sense). 
Then there exists a positive constant $C$ such that 
\begin{align}
\label{eq:wee.1}
\sup_{t\geq 0}
\int_{\Omega}\Big(|\nabla u(x,t)|^2+\big(\pa_tu(x,t)\big)^2\Big)
\Big(1+t+\lr{x}^\alpha\Big)\Big(1+t+\lr{x}^{2-\alpha}\Big)^{\frac{2\sigma}{2-\alpha}}\,dx
&\leq CE_0, 
\\
\label{eq:wee.2}
\sup_{t\geq 0}
\int_{\Omega}\big(u(x,t)\big)^2
\lr{x}^{-\alpha}\Big(1+t+\lr{x}^{2-\alpha}\Big)^{\frac{2\sigma}{2-\alpha}}\,dx
&\leq CE_0, 
\\
\label{eq:wee.3}
\int_{0}^\infty
\left(\int_{\Omega}\big(\pa_tu(x,t)\big)^2\lr{x}^{-\alpha}\Big(1+t+\lr{x}^\alpha\Big)\Big(1+t+\lr{x}^{2-\alpha}\Big)^{\frac{2\sigma}{2-\alpha}}\,dx\right)\,dt
&\leq CE_0.
\end{align}
\end{theorem}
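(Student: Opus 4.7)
\textbf{Proof plan for Theorem \ref{thm:wee}.} The strategy is a weighted-energy multiplier argument in which the weight is constructed directly from the supersolution of Theorem~\ref{thm:main}. Apply Theorem~\ref{thm:main} with $D(x):=1/a(x)$, which by \eqref{ass:a_intro} satisfies \eqref{eq:D-ass} with the same $\alpha$, to obtain a positive $\overline{U}=\overline{U}_{D,\sigma}\in C^{2}(\overline{\Omega}\times[0,\infty))$ satisfying \eqref{eq:two-sided}, \eqref{eq:deri-bdd} and the supersolution inequality $\Delta\overline{U}\leq a\,\partial_{t}\overline{U}$ (i.e., \eqref{eq:D-heat} with $D=1/a$). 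Introduce the weight
$$
W(x,t):=\bigl(1+t+\lr{x}^{\alpha}\bigr)\,\overline{U}(x,t)^{-1},
$$
which, by \eqref{eq:two-sided}, is comparable to the energy weight in \eqref{eq:wee.1}, while $a\,\partial_{t}W$ is comparable to the $u^{2}$-weight $\lr{x}^{-\alpha}(1+t+\lr{x}^{2-\alpha})^{\frac{2\sigma}{2-\alpha}}$ in \eqref{eq:wee.2}.

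Multiply the equation in \eqref{eq:dw} by $M:=W\,\partial_{t}u+\beta(\partial_{t}W)\,u$ for a parameter $\beta>\tfrac{1}{2}$ to be chosen, and integrate over $\Omega$. Routine integration by parts produces
$$
\frac{d}{dt}\mathcal{F}(t)+\int_{\Omega}P\,(\partial_{t}u)^{2}\,dx+\int_{\Omega}Q\,|\nabla u|^{2}\,dx=\mathcal{R}(t),
$$
where the density of $\mathcal{F}$ is $\tfrac{W}{2}[(\partial_{t}u)^{2}+|\nabla u|^{2}]+\beta(\partial_{t}W)\,u\,\partial_{t}u+\tfrac{\beta a\,\partial_{t}W}{2}\,u^{2}$, the coefficients are $P=aW-(\tfrac{1}{2}+\beta)\partial_{t}W$ and $Q=(\beta-\tfrac{1}{2})\partial_{t}W$, and $\mathcal{R}$ collects the cross terms $\int(\nabla W\cdot\nabla u)\partial_{t}u\,dx$, $\int\beta u\,\nabla(\partial_{t}W)\cdot\nabla u\,dx$, $\int\beta(\partial_{t}^{2}W)u\,\partial_{t}u\,dx$ and $\int\tfrac{\beta a\,\partial_{t}^{2}W}{2}u^{2}\,dx$. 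The crux is to verify the ``profile inequalities''
$$
\partial_{t}W\leq C\,aW,\quad |\partial_{t}^{2}W|\leq C\,\partial_{t}W,\quad |\nabla W|^{2}\leq C\,W^{2},\quad |\nabla(\partial_{t}W)|^{2}\leq C\,aW\,\partial_{t}W,
$$
which are established by straightforward calculation from the asymptotics \eqref{eq:two-sided}, \eqref{eq:deri-bdd} together with the supersolution inequality $\Delta\overline{U}\leq a\,\partial_{t}\overline{U}$ (the latter supplying control of $\Delta\overline{U}$, hence indirectly of $|\nabla\overline{U}|^{2}/\overline{U}$ via $\Delta\overline{U}=\nabla\cdot\nabla\overline{U}$). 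The assumption $\alpha\in[0,1)$ is used precisely to guarantee that $(1+t+\lr{x}^{\alpha})/(1+t+\lr{x}^{2-\alpha})$ remains bounded as $|x|\to\infty$, which is what keeps $\partial_{t}W/(aW)$ uniformly bounded.

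With these four bounds, choosing $\beta$ slightly larger than $\tfrac{1}{2}$ and using Cauchy--Schwarz absorbs every term in $\mathcal{R}$ into $\tfrac{1}{2}\int P(\partial_{t}u)^{2}\,dx+\tfrac{1}{2}\int Q|\nabla u|^{2}\,dx+\tfrac{1}{2}\int(a\,\partial_{t}W)u^{2}\,dx$, and simultaneously makes the density of $\mathcal{F}$ a coercive quadratic form in $(\partial_{t}u,\nabla u,u)$. The resulting differential inequality $\frac{d}{dt}\mathcal{F}(t)+c\int_{\Omega}(aW)(\partial_{t}u)^{2}\,dx\leq 0$ shows that $\mathcal{F}$ is non-increasing and that $\int_{0}^{\infty}\!\!\int_{\Omega}(aW)(\partial_{t}u)^{2}\,dx\,dt\leq \mathcal{F}(0)$. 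Since $W(\cdot,0)\asymp\lr{x}^{2\sigma+\alpha}$ and $a\,\partial_{t}W(\cdot,0)\asymp\lr{x}^{2\sigma-\alpha}$, the initial value $\mathcal{F}(0)$ is bounded by $CE_{0}$ with exactly the weights appearing in the definition of $E_{0}$, so unpacking $\mathcal{F}(t)$ yields \eqref{eq:wee.1} and \eqref{eq:wee.2}, and the space-time estimate yields \eqref{eq:wee.3}; weak solutions are handled by the usual density/regularization argument on the data. The main obstacle is the delicate simultaneous verification of the four profile inequalities and the associated absorption bookkeeping of cross terms with appropriate constants, and this is the exact place where the admissible range of $\alpha$ shrinks from $(-\infty,\min\{2,N\})$ in Theorem~\ref{thm:main} to $[0,1)$ here.
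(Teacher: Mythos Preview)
Your plan has a structural gap in the absorption step. In your energy identity the only $u^{2}$--contribution sits inside $\mathcal{F}$ (namely $\tfrac{\beta}{2}\int a(\partial_{t}W)u^{2}$); there is no $u^{2}$--dissipation on the left. Yet $\mathcal{R}$ contains $\int\tfrac{\beta a\,\partial_{t}^{2}W}{2}\,u^{2}$ and $\int\beta(\partial_{t}^{2}W)\,u\,\partial_{t}u$, and after Cauchy--Schwarz and your bound $|\partial_{t}^{2}W|\le C\,\partial_{t}W$ both produce terms of size $\int a(\partial_{t}W)\,u^{2}$. ``Absorbing into $\tfrac{1}{2}\int(a\,\partial_{t}W)u^{2}$'' is not available: that quantity is inside the time derivative, so feeding it back only yields a Gronwall inequality $\tfrac{d}{dt}\mathcal{F}\le C\mathcal{F}$ and exponential growth, not boundedness. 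A related issue is that the positivity (and a genuine two--sided bound) of $\partial_{t}W$ is never established: Theorem~\ref{thm:main} gives only $|\partial_{t}\overline U|\le C'(\ldots)^{-\frac{2\sigma}{2-\alpha}-1}$, not a sign, so $\partial_{t}W=\overline U^{-1}\bigl[1-(1+t+\lr{x}^{\alpha})\partial_{t}\overline U/\overline U\bigr]$ could vanish or change sign, killing $Q=(\beta-\tfrac12)\partial_{t}W$ and the $u^{2}$--coercivity of $\mathcal{F}$.

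There is also a gap in the profile inequalities involving spatial gradients. Theorem~\ref{thm:main} gives no information whatsoever on $\nabla\overline U$; the claim that the supersolution inequality $\Delta\overline U\le a\,\partial_{t}\overline U$ ``indirectly'' controls $|\nabla\overline U|^{2}/\overline U$ is not correct---a bound on the Laplacian does not yield a pointwise gradient bound. One can recover such bounds only by going back to the explicit construction $\Phi_{\beta,\ep}(x,t;t_0)=(t_0+t)^{-\beta}\varphi_{\beta,\ep}(z)$ with $z=\widetilde\gamma_{\ep}A_{\ep}(x)/(t_0+t)$ and using Lemma~\ref{lem_A_ep}, which is considerably more than what you invoke.

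The paper resolves exactly the $u^{2}$--issue by a different mechanism. It uses two energies with \emph{different} weights: $E_{1}$ carries $\Psi^{\lambda+\frac{\alpha}{2-\alpha}}$ with $\Psi=t_0+t+A_{\ep}$, while $E_{0}=\int(2u\,\partial_{t}u+a\,u^{2})\Phi_{\beta,\ep}^{-1+2\delta}$. In the $E_{0}$--estimate the term $2\int u\Delta u\,\Phi_{\beta,\ep}^{-1+2\delta}$ is treated not by naive integration by parts but by the identity of Lemma~\ref{lem.deltaphi}, which produces $\tfrac{1-2\delta}{2}\int u^{2}(\Delta\Phi_{\beta,\ep})\Phi_{\beta,\ep}^{-2+2\delta}$; this pairs with the term $-(1-2\delta)\int a\,u^{2}(\partial_{t}\Phi_{\beta,\ep})\Phi_{\beta,\ep}^{-2+2\delta}$ coming from the time derivative of the weight to give $-(1-2\delta)\int u^{2}(a\,\partial_{t}\Phi_{\beta,\ep}-\Delta\Phi_{\beta,\ep})\Phi_{\beta,\ep}^{-2+2\delta}\le 0$ by Lemma~\ref{lem:super-sol}. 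This is the precise place where the supersolution structure annihilates the bad $u^{2}$--contribution, and it has no counterpart in your scheme. The remaining cross term $\int|u||\partial_{t}u|\Psi^{\lambda-1}$ is then handled by the Hardy--type inequality (Lemma~\ref{lem.hardy}) for $N\ge 2$ and by a modified weight in $N=1$; after that one combines $E=E_{1}+\nu E_{0}$ and bootstraps to $\widetilde E_{1}=(t_0+t)\int(|\nabla u|^{2}+|\partial_{t}u|^{2})\Psi^{\lambda}$.
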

From the estimates \eqref{eq:wee.1}, we already have 
the energy decay estimate 
\begin{equation}\label{wee.weak}
(1+t)^{1+\frac{2\sigma}{2-\alpha}}\int_{\Omega}\Big(|\nabla u(x,t)|^2+\big(\pa_tu(x,t)\big)^2\Big)\,dx
\leq CE_0.
\end{equation}
Combining \eqref{eq:wee.3} with the usual energy equality
\begin{equation*}
\int_{\Omega}\Big(|\nabla u(x,t)|^2+\big(\pa_tu(x,t)\big)^2\Big)\,dx
=
2\int_t^\infty\left(\int_{\Omega}a(x)\big(\pa_tu(x,s)\big)^2\,dx\right)\,ds
\end{equation*}
(verified from $\int_{\Omega}\big(|\nabla u|^2+(\pa_tu)^2\big)\,dx\to 0$ as $t\to \infty$), we obtain an energy decay estimate which is slightly stronger than \eqref{wee.weak}.
\begin{corollary}\label{thm:energy-decay}
Under the assumption of Theorem \ref{thm:wee}, one has
\[
\lim_{t\to \infty}
	\left(
		(1+t)^{1+\frac{2\sigma}{2-\alpha}}
		\int_{\Omega}\Big(|\nabla u(x,t)|^2
		+\big(\pa_tu(x,t)\big)^2\Big)\,dx
	\right)
	=0.
\]
\end{corollary}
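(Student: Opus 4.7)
The plan is to combine the two ingredients highlighted in the discussion preceding the statement: the weighted space-time dissipation bound \eqref{eq:wee.3} and the energy identity
\[
\int_{\Omega}\bigl(|\nabla u(x,t)|^{2}+(\pa_{t}u(x,t))^{2}\bigr)\,dx
=2\int_{t}^{\infty}\!\!\int_{\Omega}a(x)\bigl(\pa_{t}u(x,s)\bigr)^{2}\,dx\,ds
\]
recorded in the paragraph above. I would derive this identity by multiplying the damped wave equation by $\pa_{t}u$, integrating by parts in $x$ and on the time interval $[t,T]$, and letting $T\to\infty$; the boundary term at $s=T$ drops out because \eqref{eq:wee.1} (equivalently \eqref{wee.weak}) already forces $\int_{\Omega}(|\nabla u|^{2}+(\pa_{t}u)^{2})(x,T)\,dx\to 0$.

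With the identity in hand, I would bound the right-hand side by using $a(x)\le C\lr{x}^{-\alpha}$ from \eqref{ass:a_intro}, followed by the elementary pointwise lower bounds
\[
1+s+\lr{x}^{\alpha}\ge 1+t,\qquad 1+s+\lr{x}^{2-\alpha}\ge 1+t\quad(s\ge t),
\]
which hold because $\alpha\in[0,1)$ gives $\lr{x}^{\alpha},\lr{x}^{2-\alpha}\ge 0$. Combining these produces the pointwise inequality
\[
\lr{x}^{-\alpha}\le(1+t)^{-1-\frac{2\sigma}{2-\alpha}}\,\lr{x}^{-\alpha}\bigl(1+s+\lr{x}^{\alpha}\bigr)\bigl(1+s+\lr{x}^{2-\alpha}\bigr)^{\frac{2\sigma}{2-\alpha}},
\]
and substituting it into the identity yields
\[
(1+t)^{1+\frac{2\sigma}{2-\alpha}}\!\int_{\Omega}\bigl(|\nabla u|^{2}+(\pa_{t}u)^{2}\bigr)(x,t)\,dx
\le C\!\int_{t}^{\infty}\!\!\int_{\Omega}\lr{x}^{-\alpha}\bigl(1+s+\lr{x}^{\alpha}\bigr)\bigl(1+s+\lr{x}^{2-\alpha}\bigr)^{\frac{2\sigma}{2-\alpha}}(\pa_{t}u)^{2}\,dx\,ds.
\]

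To conclude, I would observe that the integrand on the right is nonnegative and, by \eqref{eq:wee.3}, integrable on $\Omega\times[0,\infty)$ with total mass at most $CE_{0}$; hence its tail from $t$ to $\infty$ tends to $0$ as $t\to\infty$, which is exactly the corollary. The only conceptual point worth flagging is the source of the improvement from $O(1)$ to $o(1)$: read naively, the same chain of inequalities only shows boundedness of $(1+t)^{1+2\sigma/(2-\alpha)}E(t)$, essentially reproducing \eqref{wee.weak}; the vanishing of the limit is supplied precisely by the finiteness of the full space-time integral in \eqref{eq:wee.3} together with the tail-vanishing property of integrable functions. A secondary, technical point is to justify the energy identity at the weak-solution regularity $(u_{0},u_{1})\in H^{1}_{0}(\Omega)\times L^{2}(\Omega)$ used in Theorem \ref{thm:wee}, which is standard via approximation by smooth data and a limiting argument powered by the weighted energy estimates themselves.
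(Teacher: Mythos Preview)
Your proposal is correct and follows essentially the same approach as the paper: the paper's argument, given in the paragraph immediately preceding the corollary, is precisely to combine the energy identity (justified via \eqref{wee.weak}) with the finiteness of the space-time integral \eqref{eq:wee.3}, and your pointwise weight comparison makes explicit the step the paper leaves implicit.
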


The following assertion describes diffusion phenomena 
for \eqref{eq:dw} with polynomially decaying initial data. 
\begin{theorem}\label{thm:dif-pheno}
Assume that $a(x)$ satisfies \eqref{ass:a_intro} and 
the pair $(u_0,u_1)\in (H^2(\Omega)\cap H^1_0(\Omega))\times H^1_0(\Omega)$ satisfies
\begin{align*}
E_0
&=
\int_{\Omega}\Big(|\nabla u_0 (x) |^2+ \big(u_1(x)\big)^2\Big)\lr{x}^{2\sigma+\alpha}\,dx
+
\int_{\Omega} \big(u_0(x)\big)^2\lr{x}^{2\sigma-\alpha}\,dx<\infty, 
\\
E_0'
&=
\int_{\Omega}\Big(|\nabla u_1(x)|^2+\big(u_2(x)\big)^2\Big)\lr{x}^{2\sigma+3\alpha}\,dx
+
\int_{\Omega}(u_1(x))^2\lr{x}^{2\sigma+\alpha}\,dx<\infty
\end{align*}
with $u_2(x)=-\Delta u_0(x)+a(x)u_1(x)$ and $\sigma\in (0,\frac{N-\alpha}{2})$. 
Let $u$ and $v$ be solutions of \eqref{eq:dw} and \eqref{eq:dw-heat} with $v_0(x)=u_0(x)+a(x)^{-1}u_1(x)$, respectively. 
Then there exists a positive constant $K$ such that 
\[
\Big\|\sqrt{a(\cdot)}\Big(u(x,t)-v(x,t)\Big)\Big\|_{L^2(\Omega)}
\leq 
K(1+t)^{-\frac{\sigma}{2-\alpha}}\eta(t)\sqrt{E_0+E_0'},
\]
where 
\begin{align*}
	\eta (t) = 
	\begin{cases}
		(1+t)^{-\frac{2(1-\alpha)}{2-\alpha}} 
		\sqrt{\log (2+t)}
		&\text{if}\ \sigma\in [\alpha,\frac{N-\alpha}{2}),
\\
		(1+t)^{-\frac{2(1-\alpha)\sigma}{(2-\alpha)\alpha}}
&\text{if}\ \sigma\in (0,\alpha).
		\end{cases}
\end{align*}
\end{theorem}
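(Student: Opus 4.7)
The plan is to use a difference argument. Set $w := u - v$; subtracting \eqref{eq:dw-heat} from \eqref{eq:dw}, with $v_0 = u_0 + a(x)^{-1}u_1$, gives
\[
\pa_t^2 w - \Delta w + a(x)\pa_t w = -\pa_t^2 u
\quad\text{in}\ \Omega\times(0,\infty),
\]
together with $w|_{\pa\Omega} = 0$, $w(\cdot,0) = -a(x)^{-1}u_1$ and $\pa_t w(\cdot,0) = u_1 - a(x)^{-1}\Delta v_0$. I would treat $w$ by the same weighted energy method used to prove Theorem \ref{thm:wee}, with the supersolution $\overline{U}_{D,\sigma}$ from Theorem \ref{thm:main} as the multiplier; the new feature is the inhomogeneous term $-\pa_t^2 u$, whose weighted decay must produce the factor $\eta(t)$.

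To control this source, observe that $U := \pa_t u$ itself solves \eqref{eq:dw} with initial data $(u_1, u_2)$ (since $\pa_t^2 u(\cdot,0) = \Delta u_0 - a(x)u_1 = -u_2$). A direct comparison of weight exponents shows that $E_0'$ is precisely the quantity Theorem \ref{thm:wee} associates to $(u_1, u_2)$ under the parameter shift $\sigma \to \sigma + \alpha$, so that applying Theorem \ref{thm:wee} to $U$ with this shifted parameter yields in particular
\[
\sup_{t\geq 0}\int_\Omega |\pa_t^2 u|^2 \big(1+t+\lr{x}^\alpha\big)\big(1+t+\lr{x}^{2-\alpha}\big)^{\frac{2(\sigma+\alpha)}{2-\alpha}}\,dx \leq CE_0',
\]
together with the analogues of \eqref{eq:wee.2}--\eqref{eq:wee.3} for $U$ and $\pa_t U$. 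Testing the equation for $w$ against $(\pa_t w)\,\overline{U}_{D,\sigma}^{-1}$ and $w\,\overline{U}_{D,\sigma}^{-1}$ and integrating by parts, the source contributes $-\int_\Omega (\pa_t^2 u)(\pa_t w)\,\overline{U}_{D,\sigma}^{-1}\,dx$, which by Cauchy--Schwarz is absorbed by a small fraction of the dissipation $\int_\Omega a\,|\pa_t w|^2\,\overline{U}_{D,\sigma}^{-1}\,dx$ plus the remainder
\[
C\int_\Omega \frac{|\pa_t^2 u|^2}{a(x)\,\overline{U}_{D,\sigma}(x,t)}\,dx.
\]
Thanks to \eqref{eq:two-sided} and \eqref{ass:a_intro}, the weight in this last integral is comparable to $\lr{x}^\alpha\big(1+t+\lr{x}^{2-\alpha}\big)^{2\sigma/(2-\alpha)}$.

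The main obstacle is dominating this remainder sharply enough to obtain the dichotomy in $\eta(t)$. I would split $\Omega$ into $\{\lr{x}^{2-\alpha}\leq 1+t\}$ and its complement and interpolate between the pointwise-in-time bound on $|\pa_t^2 u|^2$ displayed above and the space-time $L^2$ bound on $\pa_t U = \pa_t^2 u$ furnished by the analogue of \eqref{eq:wee.3}. The gap between the weight $\lr{x}^{-\alpha}\big(1+t+\lr{x}^\alpha\big)\big(1+t+\lr{x}^{2-\alpha}\big)^{2(\sigma+\alpha)/(2-\alpha)}$ available from the previous step and the weight $\lr{x}^\alpha\big(1+t+\lr{x}^{2-\alpha}\big)^{2\sigma/(2-\alpha)}$ needed here leaves a time-factor whose integrability changes regime exactly at $\sigma = \alpha$: for $\sigma \geq \alpha$ the time integral saturates logarithmically and yields the factor $\sqrt{\log(2+t)}$, while for $\sigma < \alpha$ it is finite and yields the power $(1+t)^{-2(1-\alpha)\sigma/((2-\alpha)\alpha)}$. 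A Gronwall-type integration of the resulting differential inequality for the weighted energy of $w$, combined with the initial bound $\mathcal{E}[w](0)\lesssim E_0$ coming from the explicit forms of $w(0)$ and $\pa_t w(0)$, closes the argument and produces the asserted decay of $\|\sqrt{a}(u-v)\|_{L^2(\Omega)}$.
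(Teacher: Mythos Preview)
Your derivation of the equation for $w=u-v$ is incorrect, and this undermines the rest of the plan. Since $v$ solves $a(x)\pa_tv=\Delta v$, applying the damped wave operator to $v$ gives
\[
\pa_t^2 v-\Delta v+a(x)\pa_t v=\pa_t^2 v-\Delta v+\Delta v=\pa_t^2 v,
\]
so subtracting from the equation for $u$ yields
\[
\pa_t^2 w-\Delta w+a(x)\pa_t w=-\pa_t^2 v,
\]
with source $-\pa_t^2 v$, not $-\pa_t^2 u$. Your subsequent strategy of controlling the source via Theorem~\ref{thm:wee} applied to $U=\pa_t u$ therefore does not apply: $\pa_t^2 v$ is a parabolic object, and obtaining sharp weighted bounds for it at the regularity level $(u_0,u_1)\in (H^2\cap H^1_0)\times H^1_0$ of the theorem is problematic (already $\pa_t w(\cdot,0)=u_1-a^{-1}\Delta v_0$ requires $\Delta(a^{-1}u_1)\in L^2$, which is not assumed). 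A secondary issue is that the shift $\sigma\mapsto\sigma+\alpha$ needed to match $E_0'$ to the hypotheses of Theorem~\ref{thm:wee} may push the parameter outside the admissible range $(0,\tfrac{N-\alpha}{2})$ when $\sigma$ is close to $\tfrac{N-\alpha}{2}$.

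The paper proceeds quite differently. It does not treat $w$ as a damped wave; instead it rewrites \eqref{eq:dw} in parabolic form $\pa_t u-a^{-1}\Delta u=-a^{-1}\pa_t^2 u$, uses the Duhamel formula for the semigroup $T(t)$ generated by $a^{-1}\Delta$, and after an integration by parts obtains
\[
u(t)-T(t)[u_0+a^{-1}u_1]=J_1(t)+J_2(t)+J_3(t),
\]
with $J_1$ an integral of $T(t-s)[a^{-1}\pa_t^2 u(s)]$ over $[t/2,t]$, $J_2$ a boundary term, and $J_3$ an integral of $LT(t-s)[a^{-1}\pa_t u(s)]$ over $[0,t/2]$. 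Each piece is then bounded using the semigroup estimates of Section~\ref{sec:heat} together with the \emph{higher-order} weighted energy estimates of Theorem~\ref{thm.ek} (with $k=1$), which are designed precisely to supply the needed bounds on $\pa_t u$ and $\pa_t^2 u$ without the parameter-range obstruction above. The dichotomy in $\eta(t)$ emerges from the estimate of $J_3$ (via Proposition~\ref{prop.4.2}) in the first regime and from interpolation of the $k=1$ energy bounds in the second. The theorem as stated is then obtained from this more regular case by approximation.
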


\begin{remark}
Since $v_0=u_0+a(x)^{-1}u_1$ (the initial value of $v$) satisfies $\lr{x}^{\sigma-\frac{\alpha}{2}}v_0\in L^2(\Omega)$, 
Proposition \ref{prop:heat-decay} gives 
\begin{align}
\label{eq:principle-term}
\big\|\sqrt{a(\cdot)}v(t)\big\|_{L^2(\Omega)}\leq C(1+t)^{-\frac{\sigma}{2-\alpha}}
\big\|\lr{x}^{\sigma-\frac{\alpha}{2}}v_0\big\|_{L^2(\Omega)}.
\end{align}
Therefore Theorem \ref{thm:dif-pheno} enables us 
to conclude that $v(t)$ provides the asymptotic profile of the solution $u$.  
In \cite[Remark 1.4]{SoWa19_CCM}, a decay estimate similar to \eqref{eq:principle-term} 
via weighted $L^p$-$L^q$ estimates are provided but with small extra growth factor $(1+t)^{\ep}$. 
The merit of this procedure via Proposition \ref{prop:heat-decay} is 
to reduce unexpected factor $(1+t)^\ep$. 
\end{remark}

As a corollary of Theorem \ref{thm:dif-pheno}, 
we also have a limiting case $\sigma=\frac{N-\alpha}{2}$ as follows.  

\begin{corollary}\label{cor:dif-pheno2}
Assume that $a(x)$ satisfies \eqref{ass:a_intro} and 
the pair $(u_0,u_1)\in (H^2(\Omega)\cap H^1_0(\Omega))\times H^1_0(\Omega)$ satisfies
\begin{align*}
\lr{x}^{\frac{N}{2}}u_0, 
\lr{x}^{\frac{N}{2}+\alpha}\nabla u_0, 
\lr{x}^{\frac{N}{2}+\alpha}u_1, 
\lr{x}^{\frac{N}{2}+2\alpha}\Delta u_0, 
\lr{x}^{\frac{N}{2}+2\alpha}\nabla u_1
\in L^2(\Omega). 
\end{align*}
Let $u$ and $v$ be solutions of \eqref{eq:dw} and \eqref{eq:dw-heat} with $v_0(x)=u_0(x)+a(x)^{-1}u_1(x)$, respectively. 
Then for every $\ep>0$, 
there exists a positive constant $K_\ep$ such that 
\[
\Big\|\sqrt{a(\cdot)}\Big( u(\cdot,t)-v(\cdot,t) \Big)\Big\|_{L^2(\Omega)}
\leq 
K_\ep(1+t)^{-\frac{N-\alpha}{2(2-\alpha)}-\frac{2(1-\alpha)}{2-\alpha}+\ep}.
\]
\end{corollary}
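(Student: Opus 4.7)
The plan is to deduce the corollary directly from Theorem~\ref{thm:dif-pheno} by choosing the free parameter $\sigma$ strictly less than $(N-\alpha)/2$, sufficiently close that the logarithmic factor in $\eta(t)$ may be absorbed into the prescribed polynomial loss $(1+t)^{\ep}$. Concretely, given $\ep>0$, I would set
\[
\sigma = \frac{N-\alpha}{2} - \delta, \qquad \delta = \frac{(2-\alpha)\ep}{2},
\]
taking $\ep$ small enough that $\sigma\in[\alpha,\frac{N-\alpha}{2})$ so that the first branch of $\eta(t)$ in Theorem~\ref{thm:dif-pheno} is the relevant one.

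The first step is to verify that the weighted $L^2$ hypotheses of the corollary imply $E_0+E_0'<\infty$ for this $\sigma$. For $E_0$ one has $\sigma+\frac{\alpha}{2}=\frac{N}{2}-\delta\leq \frac{N}{2}+\alpha$ and $\sigma-\frac{\alpha}{2}=\frac{N-2\alpha}{2}-\delta\leq \frac{N}{2}$, so the assumptions on $\lr{x}^{N/2+\alpha}\nabla u_0, \lr{x}^{N/2+\alpha}u_1, \lr{x}^{N/2}u_0$ control the integrals defining $E_0$. For $E_0'$ one has $\sigma+\frac{3\alpha}{2}=\frac{N}{2}+\alpha-\delta\leq \frac{N}{2}+2\alpha$, which handles the $\nabla u_1$ and $\Delta u_0$ pieces of $u_2=-\Delta u_0+a u_1$; the remaining term $a u_1$ is controlled using $a(x)\lesssim\lr{x}^{-\alpha}$ from \eqref{ass:a_intro}, which reduces the weight $\lr{x}^{\sigma+3\alpha/2}$ on $au_1$ back to $\lr{x}^{\sigma+\alpha/2}u_1\in L^2$.

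The second step is to invoke Theorem~\ref{thm:dif-pheno} and simplify. The combined polynomial exponent in the first branch is
\[
-\frac{\sigma}{2-\alpha}-\frac{2(1-\alpha)}{2-\alpha}
= -\frac{N-\alpha}{2(2-\alpha)}-\frac{2(1-\alpha)}{2-\alpha} + \frac{\delta}{2-\alpha}
= -\frac{N-\alpha}{2(2-\alpha)}-\frac{2(1-\alpha)}{2-\alpha}+\frac{\ep}{2}.
\]
Since $\sqrt{\log(2+t)}\leq C_\ep (1+t)^{\ep/2}$, multiplying gives the extra factor $(1+t)^{\ep}$ claimed in the corollary, with the constant $K_\ep$ absorbing $K$, $C_\ep$ and $\sqrt{E_0+E_0'}$.

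The main (mild) obstacle is verifying that the weighted hypotheses of the corollary really do imply those of Theorem~\ref{thm:dif-pheno} uniformly as $\delta \downarrow 0$; this is essentially a bookkeeping check that every required weight $\sigma\pm\frac{\alpha}{2}$ or $\sigma+\frac{3\alpha}{2}$ is dominated by the corresponding weight from the hypothesis, as above. A subtler point is the case $N<3\alpha$, in which the range $[\alpha,\frac{N-\alpha}{2})$ is empty and the second branch of $\eta(t)$ would have to be used; however, this regime is excluded for $\alpha\in[0,1)$ once $N\geq 3$, so in the setting of Theorem~\ref{thm:wee} the first branch suffices and the argument above is complete.
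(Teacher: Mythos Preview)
Your approach is correct and matches the paper's intended argument: the paper states the result explicitly as the limiting case $\sigma\to\frac{N-\alpha}{2}$ of Theorem~\ref{thm:dif-pheno} and offers no further proof, so the natural deduction is precisely the one you carry out. The caveat you raise about the regime $N<3\alpha$ (possible only when $N\le 2$ since $\alpha\in[0,1)$) is genuine---there the first branch of $\eta(t)$ is unavailable for $\sigma$ near $\frac{N-\alpha}{2}$ and the second branch yields strictly weaker decay $(1+t)^{-\sigma/\alpha}$---but the paper does not address this either, so your observation flags a minor lacuna in the statement rather than a defect in your argument.
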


\begin{remark}
Energy estimates with polynomial growth weights (such as Theorem \ref{thm:wee}) can be also applied to 
semilinear wave equations with damping term $a(x)\pa_tu$ (for example, see Sobajima \cite{So18a} for the case $a(x)\equiv 1$). 
This kind of analysis including asymptotic behavior of solutions to nonlinear problem will be done in a forthcoming paper.
\end{remark}

The present paper is organized as follows. 
In Section \ref{sec:pre}, 
the basic properties of Kummer's confluent hypergeometric functions are collected, 
which are deeply used throughout of this paper. 
Section \ref{sec:supersol} 
is devoted to prove Theorem \ref{thm:main}, that is, 
the construction of supersolution to $\pa_tU-D(x)\Delta U=0$.
As applications, 
we will prove weighted $L^2$-type decay estimates
for initial-boundary problem of $\pa_tv-D(x)\Delta v=0$
via semigroup approach in Section \ref{sec:heat}.
In the last Section \ref{sec:damped}, 
we show weighted energy estimates 
for solutions to wave equations with damping term 
$\pa_t^2u-\Delta u+a(x)\pa_tu=0$ and their diffusion phenomena.

\section{Preliminaries}\label{sec:pre}

We collect some of important properties of 
Kummer's confluent hypergeometric functions. 
At the beginning we state their definition. 
\begin{definition}[Kummer's confluent hypergeometric functions]
\label{def:M}
For
$b,c \in \mathbb{R}$ with $-c \notin \mathbb{N} \cup \{0\}$,
Kummer's confluent hypergeometric function of first kind is defined by
\begin{align*}
	M(b, c; s) = \sum_{n=0}^{\infty} \frac{(b)_n}{(c)_n} \frac{s^n}{n!}, \quad s\in [0,\infty),
\end{align*}
where $(d)_n$ is the Pochhammer symbol defined by 
$(d)_0 = 1$ and $(d)_n = \prod_{k=1}^n (d+k-1)$ for $n\in\N$; 
note that when $b=c$, Kummer's function $M(b,b;s)$ coincides with $e^s$.
\end{definition}

The following properties of Kummer's confluent hypergeometric functions 
are well known (see e.g., Beals--Wong \cite{BeWo10}). 
\begin{lemma}\label{lem_Kummer}
Kummer's confluent hypergeometric function
$M(b, c; s)$ satisfies the following properties:
\begin{itemize}
\item[{\rm (i)}]
$M(b, c; s)$ is a solution of
the Kummer equation
\begin{align}
\label{Kummer_eq}
	s u(s)'' + (c-s) u'(s) - bu(s) = 0.
\end{align}
\item[{\rm (ii)}]
If $c \ge b > 0$, then
$M(b, c; s) > 0$
and
$M(b, c; s) \sim \frac{\Gamma(c)}{\Gamma(b)} s^{b-c} e^s$
as
$s\to \infty$,
more precisely,
\begin{align*}
	\lim_{s\to \infty} \frac{M(b, c; s)}{s^{b-c}e^s} = \frac{\Gamma(c)}{\Gamma(b)}.
\end{align*}
\item[{\rm (iii)}]
More generally, if $-c \notin \mathbb{N}\cap \{0\}$ and $c \ge b$, then,
while the sign of
$M(b, c; s)$
is indefinite,
it has the same asymptotic behavior
\begin{align*}
	\lim_{s\to \infty} \frac{M(b, c; s)}{s^{b-c}e^s} = \frac{\Gamma(c)}{\Gamma(b)}.
\end{align*}
In particular, 
$M(b, c; s)$
has a bound
\begin{align*}
	| M(b,c;s) | \le C_{b,c} (1+s)^{b-c} e^s
\end{align*}
with some constant $C_{b,c}>0$.
\item[{\rm (iv)}]
$M(b, c; s)$
satisfies the relations
\begin{align*}%
	s M(b,c;s) &= sM'(b,c;s) + (c-b)M(b,c;s) - (c-b)M(b-1,c;s),\\
	c M'(b,c;s) &= cM(b,c;s) - (c-b) M(b,c+1;s).
\end{align*}%
\end{itemize}
\end{lemma}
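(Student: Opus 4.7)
My plan is to establish the four statements in the order (i), (iv), (ii), (iii), since the algebraic identities will make some of the analytic work easier. For part (i), I would substitute the power series $M(b,c;s)=\sum_{n\ge 0}\frac{(b)_n}{(c)_n}\frac{s^n}{n!}$ directly into the Kummer ODE \eqref{Kummer_eq} and check that every coefficient vanishes; the key identity is $(b)_{n+1}=(b+n)(b)_n$, which makes the coefficient of $s^n$ collapse to $0$ after a one-step index shift. For part (iv), the same series manipulation works: differentiating termwise gives $sM'(b,c;s)=\sum_{n\ge 0}\frac{(b)_{n+1}}{(c)_{n+1}}\frac{s^{n+1}}{n!}$, and subtracting from $sM(b,c;s)=\sum_{n\ge 0}\frac{(b)_n}{(c)_n}\frac{s^{n+1}}{n!}$ produces a factor $\frac{c-b}{c+n}$. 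Using the elementary identity $(b)_n-(b-1)_n=n(b)_{n-1}$ then turns the remaining sum into $(c-b)[M(b,c;s)-M(b-1,c;s)]$, proving the first relation of (iv); the second identity is obtained analogously.

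For part (ii), positivity when $c\ge b>0$ is immediate because every Pochhammer ratio $(b)_n/(c)_n$ is nonnegative. For the asymptotic I would rely on Euler's integral representation
\begin{equation*}
M(b,c;s)=\frac{\Gamma(c)}{\Gamma(b)\Gamma(c-b)}\int_0^1 e^{st}\,t^{b-1}(1-t)^{c-b-1}\,dt,\qquad c>b>0,
\end{equation*}
which can be verified by expanding $e^{st}$ and using the beta function. Substituting $t=1-\tau/s$ and applying Watson's lemma near $\tau=0$ extracts the leading order $\frac{\Gamma(c)}{\Gamma(b)}s^{b-c}e^s$. The boundary case $c=b$ is trivial since $M(b,b;s)=e^s$ and $s^{b-c}=1$.

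Part (iii) is where the main difficulty lies, because the integral representation fails for general $b$ (no assumption $b>0$) while $-c\notin\mathbb{N}\cup\{0\}$ still allows $(c)_n\ne 0$. The cleanest route is to use the contiguous relation from (iv) to reduce the value of $b$: iterating $(c-b)M(b-1,c;s)=(c-b)M(b,c;s)+sM'(b,c;s)-sM(b,c;s)$ expresses $M(b,c;s)$ in terms of $M(b+k,c;s)$ with $b+k>0$, and the known asymptotic from (ii) together with Lemma \ref{lem_Kummer}(i) (to handle the derivative terms) propagates the leading factor $\frac{\Gamma(c)}{\Gamma(b)}s^{b-c}e^s$ correctly thanks to the functional equation $\Gamma(c)/\Gamma(b+k)=\Gamma(c)/[(b)_k\Gamma(b)]$ cancelling the shift. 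The uniform bound $|M(b,c;s)|\le C_{b,c}(1+s)^{b-c}e^s$ then follows by combining the asymptotic with the continuity of $M(b,c;\cdot)$ on $[0,\infty)$ (entire in $s$), so the quotient $M(b,c;s)/[(1+s)^{b-c}e^s]$ is continuous on $[0,\infty)$ and has a finite limit at infinity. The hard part is exactly this reduction, since one must track the derivative terms carefully and verify that no cancellation destroys the leading coefficient; alternatively one may cite the standard asymptotic expansion from Beals--Wong \cite{BeWo10} and skip the ODE computation.
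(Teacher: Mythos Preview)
Your treatment of (i), (ii), and (iv) is correct and in fact more detailed than the paper, which simply cites Beals--Wong \cite{BeWo10} for those parts. For (iii), however, the paper takes a different and cleaner route than your contiguous-relation reduction. Instead of iterating the first identity in (iv), the paper uses the elementary derivative formula
\[
M^{(m)}(b,c;s)=\frac{(b)_m}{(c)_m}\,M(b+m,c+m;s)
\]
and applies l'H\^{o}pital's rule $m$ times, where $m$ is chosen so that $b+m>0$; since then $c+m\ge b+m>0$, part (ii) applies directly to the shifted function, and the identity $\frac{(b)_m}{(c)_m}\cdot\frac{\Gamma(c+m)}{\Gamma(b+m)}=\frac{\Gamma(c)}{\Gamma(b)}$ closes the computation in one line. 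Your approach via (iv) keeps $c$ fixed but introduces a factor $sM'(b+1,c;s)$ at every step, and the coefficient $c-b-1$ you need to divide by can vanish (e.g.\ when $c=b+1$), so additional case analysis is required; the scheme can be made to work, but the l'H\^{o}pital argument sidesteps all of this bookkeeping by shifting $b$ and $c$ simultaneously and letting the differentiation absorb the derivative terms automatically.
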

\begin{remark}
For the proof of (i), (ii), and (iv), see \cite[p.190, (6.1.1)]{BeWo10}, \cite[p.192, (6.1.8)]{BeWo10},
and \cite[p.200]{BeWo10}, respectively.
The assertion (iii) can be found in \cite[p.192, the comment under (6.1.9)]{BeWo10}.
Also, we easily prove it in the following way.
We first note that
\begin{align*}
	M^{(m)}(b,c;s) = \frac{(b)_{m}}{(c)_m} M(b+m, c+m; s)
\end{align*}
and
$|M^{(m)}(b,c;s)| \to \infty$ as $s \to \infty$ for any $m \in \mathbb{Z}_{\ge 0}$.
Let
$m \in \mathbb{Z}_{\ge 0}$
be such that $b+m > 0$ holds.
Then, the l'H\^{o}pital theorem and (ii) imply
\begin{align*}
	\lim_{s \to \infty} \frac{M(b,c;s)}{s^{b-c}e^s}
	&= \lim_{s \to \infty} \frac{D_s^m M(b,c;s)}{D_s^m (s^{b-c}e^s)}
	= \frac{(b)_{m}}{(c)_m} \lim_{s \to \infty} \frac{M(b+m, c+m; s)}{s^{b-c}e^s + o(s^{b-c}e^s)}
	= \frac{(b)_{m}}{(c)_m} \frac{\Gamma(c+m)}{\Gamma(b+m)} = \frac{\Gamma(c)}{\Gamma(b)}.
\end{align*}
\end{remark}

\section{Construction of supersolution to $\pa_tU=D(x)\Delta U$}\label{sec:supersol}

In this section, we will construct a family of positive supersolutions to the 
parabolic equation
\begin{align}\label{eq:sect3}
\pa_tU-D(x)\Delta U=0, \quad (x,t) \in \Omega\times (0,\infty).
\end{align}
Here we assume that $\alpha\in (-\infty,\min\{2,N\})$ 
in \eqref{eq:D-ass}.
Following the previous works \cite{SoWa16_JDE, SoWa17_AIMS, SoWa18_ADE}, 
we use the same notation as the damping coefficient $a(x)$. Therefore we put 
\begin{align}%
\label{a}
	a(x) := \frac{1}{D(x)}.
\end{align}%
Since
$D(x)$
is positive, the problem
\eqref{eq:D-heat} is equivalent to
\begin{align}\label{eq:a-heat}
	a(x) \pa_t U (x,t)-\Delta U(x,t)=0, 
\quad (x,t)\in \Omega\times (0,\infty).
\end{align}
Also, the assumption \eqref{eq:D-ass} implies that
\begin{align}\label{ass.a.sec3}%
	\lim_{|x|\to \infty} \left( |x|^{\alpha} a(x) \right) = \frac{1}{D_0} > 0.
\end{align}%

To fix the direction of the discussion, 
we first recall the special case where $a(x)=|x|^{-\alpha}$ ($D(x) = |x|^{\alpha}$)
with $\alpha\in [0,1)$, 
which is studied in \cite[Section 2]{SoWa19_CCM}. 
In this case, the equation \eqref{eq:sect3} becomes
\begin{align}\label{eq:sect3.spec}
	|x|^{-\alpha}\pa_tU = \Delta U,\quad (x,t)\in \R^N\times (0,\infty).
\end{align}
This equation has a self-similar structure. 
Indeed, if $U(x,t)$ is the solution of \eqref{eq:sect3.spec}, then $U_s(x,t)=U(s^{\frac{1}{2-\alpha}}x,st)$ 
is also a solution of the same equation. 
Therefore, we can introduce a notion of self-similar solutions of
the type $U=s^{\beta}U_s$ $(\beta>0)$. 
Fortunately, such a family of solutions can be explicitly written by using Kummer's confluent hypergeometric functions. 
(In this moment, to use this kind of self-similar structure we shall impose $\alpha<2$. 
The other cases can be considered via the Kelvin transform.)
The following lemma is the list of their important properties. 

\begin{lemma}[{\cite[Section 2]{SoWa19_CCM}}]\label{lem:exact}
For $\beta>0$ and $\alpha\in [0,1)$, define 
\[
\widetilde{\Phi}_{\beta}(x,t)
=t^{-\beta}\varphi_{\beta}\big(\xi(x,t)\big), 
\quad 
\varphi_{\beta}(z)=e^{-z}M\left(\frac{N-\alpha}{2-\alpha}-\beta,\frac{N-\alpha}{2-\alpha};z\right), 
\quad
\xi(x,t)=\frac{|x|^{2-\alpha}}{(2-\alpha)^2t}.
\]
Then $\widetilde{\Phi}_{\beta}\in C^\infty(\R^N\times (0,\infty))$ satisfies the following assertions:
\begin{itemize}
\item[(i)]
for every $\beta>0$ and $s>0$, 
\[
\widetilde{\Phi}_{\beta}(x,t)=s^{\beta}\widetilde{\Phi}_{\beta}(s^{\frac{1}{2-\alpha}}x,st), 
\quad (x,t)\in \R^N\times (0,\infty);
\]
\item[(ii)]
for every $\beta>0$, 
\[
|x|^{-\alpha}\pa_t\widetilde{\Phi}_{\beta}(x,t)=\Delta \widetilde{\Phi}_{\beta}(x,t), 
\quad (x,t)\in \R^N\times (0,\infty);
\]
\item[(iii)]
for every $\beta>0$, there exists a positive constant $C_\beta >0$ such that 
\[
|\widetilde{\Phi}_{\beta}(x,t)|\leq C_\beta\left(t+\frac{|x|^{2-\alpha}}{(2-\alpha)^2}\right)^{-\beta}, 
\quad (x,t)\in \R^N\times (0,\infty);
\]
\item[(iv)]
for every $0<\beta<\frac{N-\alpha}{2-\alpha}$, 
there exists a positive constant $c_\beta>0$ such that
\[
\widetilde{\Phi}_{\beta}(x,t)\geq c_\beta\left(t+\frac{|x|^{2-\alpha}}{(2-\alpha)^2}\right)^{-\beta}, 
\quad (x,t)\in \R^N\times (0,\infty);
\]
\item[(v)]
for every $\beta>0$, 
\[
\pa_t\widetilde{\Phi}_{\beta}(x,t)=-\beta \widetilde{\Phi}_{\beta+1}(x,t), 
\quad (x,t)\in \R^N\times (0,\infty).
\]
\end{itemize}
\end{lemma}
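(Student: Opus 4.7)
The plan is to reduce every assertion to a statement about the one-variable function $\varphi_\beta(z) = e^{-z}M(b,c;z)$ with $b = c - \beta$ and $c = (N-\alpha)/(2-\alpha)$, and then invoke the corresponding property of Kummer's function from Lemma \ref{lem_Kummer}. The self-similar variable $\xi(x,t) = |x|^{2-\alpha}/((2-\alpha)^2 t)$ is the key bookkeeping device: note that $\xi$ is invariant under $(x,t)\mapsto (s^{1/(2-\alpha)}x,st)$ and that $t(1+\xi) = t + |x|^{2-\alpha}/(2-\alpha)^2$, which identifies the right-hand sides of the desired two-sided bounds.

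Assertion (i) is immediate from the scale invariance of $\xi$ and the $t^{-\beta}$ prefactor. For (ii), I would compute $\partial_t\widetilde\Phi_\beta$ and $\Delta\widetilde\Phi_\beta$ in terms of $\xi$ (using $\partial_t\xi = -\xi/t$ and the radial formula $\Delta = \partial_r^2 + (N-1)r^{-1}\partial_r$), multiply by $|x|^\alpha$, and observe that the identity $|x|^{-\alpha}\partial_t\widetilde\Phi_\beta = \Delta\widetilde\Phi_\beta$ is equivalent to
\[
\xi\varphi_\beta''(\xi) + \Bigl(\tfrac{N-\alpha}{2-\alpha}+\xi\Bigr)\varphi_\beta'(\xi) + \beta\varphi_\beta(\xi) = 0.
\]
Writing $\varphi_\beta(z)=e^{-z}M(b,c;z)$ and expanding the derivatives, the three $M$ terms collapse to $z M''+(c-z)M'-bM=0$, i.e., precisely the Kummer equation of Lemma \ref{lem_Kummer}(i) for $b=c-\beta$. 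This gives the choice of parameters in $\varphi_\beta$ a posteriori and simultaneously proves (ii).

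For the bounds (iii) and (iv) I would pass through $\varphi_\beta$. For (iii), Lemma \ref{lem_Kummer}(iii) with $b-c=-\beta$ gives $|M(b,c;z)|\le C(1+z)^{-\beta}e^{z}$, so $|\varphi_\beta(z)|\le C(1+z)^{-\beta}$, and hence $|\widetilde\Phi_\beta(x,t)|\le C t^{-\beta}(1+\xi)^{-\beta} = C(t + |x|^{2-\alpha}/(2-\alpha)^2)^{-\beta}$. For (iv), the restriction $0<\beta<(N-\alpha)/(2-\alpha)$ ensures $0<b<c$, so Lemma \ref{lem_Kummer}(ii) yields both $M(b,c;z)>0$ for all $z\ge 0$ and $\varphi_\beta(z)\sim \Gamma(c)/\Gamma(b)\, z^{-\beta}$ as $z\to\infty$, together with $\varphi_\beta(0)=1$; continuity and strict positivity on $[0,\infty)$ then upgrade the asymptotic to a global lower bound $\varphi_\beta(z)\ge c_\beta(1+z)^{-\beta}$, which transfers to the claimed lower bound on $\widetilde\Phi_\beta$.

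Finally, for (v), I would differentiate $\widetilde\Phi_\beta$ in $t$ directly to obtain
\[
\partial_t\widetilde\Phi_\beta(x,t) = -t^{-\beta-1}e^{-\xi}\bigl[\beta M(b,c;\xi) + \xi M'(b,c;\xi) - \xi M(b,c;\xi)\bigr],
\]
and then apply the recurrence in Lemma \ref{lem_Kummer}(iv) with $c-b=\beta$, which rearranges exactly to $\beta M(b-1,c;\xi) = \beta M(b,c;\xi) + \xi M'(b,c;\xi) - \xi M(b,c;\xi)$, identifying the bracket with $\beta M(c-(\beta+1),c;\xi)$ and hence the right-hand side with $-\beta\widetilde\Phi_{\beta+1}(x,t)$. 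The main technical obstacle is (ii): carrying out the reduction carefully so that the coefficients line up with Kummer's equation — once that bookkeeping is done, the remaining items are essentially algebraic consequences of Lemma \ref{lem_Kummer}.
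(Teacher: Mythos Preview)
Your proof is correct. The paper itself does not prove this lemma---it is quoted from \cite{SoWa19_CCM}---but your argument matches exactly the techniques the paper deploys for the generalized version: the reduction of the PDE to the ODE $z\varphi''+(c+z)\varphi'+\beta\varphi=0$ and its identification with Kummer's equation is the same computation underlying Definition~\ref{phi.beta} and Lemma~\ref{lem:super-sol}; your bounds (iii)--(iv) via Lemma~\ref{lem_Kummer}(ii)--(iii) are precisely how the paper proves Lemma~\ref{lem_varphi}(i)--(ii); and your derivation of (v) from the first recurrence in Lemma~\ref{lem_Kummer}(iv) is the same as the paper's proof of Lemma~\ref{lem_varphi}(iii) (which then yields Lemma~\ref{lem.dtphi}).
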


The aim of this section is to construct 
a family of supersolutions to \eqref{eq:sect3} 
of the form 
\[
(t_0+t)^{-\beta}\varphi\left( \frac{\gamma A(x)}{t_0+t}\right)
\]
with properties similar to Lemma \ref{lem:exact} (iii)--(v), 
which is motivated by the family $\{\widetilde{\Phi}_\beta\}_{\beta>0}$ in Lemma \ref{lem:exact}.  
\subsection{Related elliptic problem}
Observe that the relation 
\[
\Delta \left(\frac{|x|^{2-\alpha}}{(2-\alpha)^2}\right)=\frac{N-\alpha}{2-\alpha}|x|^{-\alpha}
=({\rm const.})a(x)
\]
seems to be important.
As the generalization of this relation, 
we recall existence of {\it approximate} solutions of the Poisson equation
\begin{align}\label{eq:ell}%
	\Delta A(x) = a(x),\quad x\in \mathbb{R}^N
\end{align}%
in the sense \eqref{A1} below. 
We refer {\cite[Lemma 2.1]{SoWa17_AIMS} for $\alpha\in [0,1)$}, {\cite[Lemma 3.1]{SoWa18_ADE} for $\alpha\in (-\infty,0)$}
for the proof. 
The proof of the remaining cases $\alpha\in [1,2)$ 
is essentially the same as  {\cite[Lemma 2.1]{SoWa17_AIMS}.
\begin{lemma}\label{lem_A_ep}
Assume that $a(x)$ satisfies \eqref{ass.a.sec3} with 
$\alpha\in (-\infty,\min\{2,N\})$. 
Then for every $\ep \in (0,1)$, 
there exist a function $A_\ep\in C^2(\R^N)$ 
and positive constants $c_\ep$ and $C_\ep$ such that
\begin{align}
\label{A1}
	&(1-\ep)a(x)\leq \Delta A_\ep (x) \leq (1+\ep) a(x),\\
\label{A2}
	&c_\ep \lr{x}^{2-\alpha} \leq A_\ep(x) \leq C_\ep \lr{x}^{2-\alpha},\\
\label{A3}
	&\frac{|\nabla A_\ep(x)|^2}{a(x)A_\ep(x)}\leq \frac{2-\alpha}{N-\alpha}+\ep
\end{align}
hold for $x\in \mathbb{R}^N$.
\end{lemma}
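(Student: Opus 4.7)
The plan is to build $A_\ep$ as a perturbation of the explicit profile
\[
A_0(x):=\kappa\,\langle x\rangle^{2-\alpha},\qquad \kappa:=\frac{1}{(N-\alpha)(2-\alpha)D_0},
\]
whose leading Laplacian $(2-\alpha)(N-\alpha)\kappa|x|^{-\alpha}$ matches the asymptotic behavior $|x|^{-\alpha}/D_0$ of $a(x)$ given by \eqref{ass.a.sec3}. A direct computation yields
\[
\Delta\langle x\rangle^{2-\alpha}=(2-\alpha)\langle x\rangle^{-\alpha-2}\bigl(N+(N-\alpha)|x|^2\bigr),\qquad |\nabla\langle x\rangle^{2-\alpha}|^2=(2-\alpha)^2\langle x\rangle^{-2\alpha-2}|x|^2,
\]
from which $\Delta A_0(x)/a(x)\to 1$ and $|\nabla A_0|^2/(aA_0)\to(2-\alpha)/(N-\alpha)$ as $|x|\to\infty$. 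Hence all three conditions \eqref{A1}--\eqref{A3} hold for $A_0$ on an exterior region $\{|x|\ge R_\ep\}$ once $R_\ep$ is chosen large enough depending on $\ep$.

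For the interior, I would set $A_\ep:=A_0+B_\ep+K_\ep$, where $K_\ep>0$ is a constant and $B_\ep\in C^2(\R^N)$ is a smooth function supported in $\{|x|\le R_\ep+1\}$ produced by cutting off the Dirichlet solution of $\Delta B=a-\Delta A_0$ on a slightly larger ball. Choosing $R_\ep$ large enough that $\Delta A_0$ is already $(\ep/2)$-close to $a$ on the transition annulus $\{R_\ep\le|x|\le R_\ep+1\}$, and absorbing the cutoff error terms there into the remaining $\ep/2$ margin, condition \eqref{A1} holds globally. Condition \eqref{A2} is then immediate because $A_0\asymp\langle x\rangle^{2-\alpha}$ while $B_\ep+K_\ep$ is bounded and $K_\ep$ is large enough to ensure $A_\ep$ stays strictly positive even where $B_\ep$ is most negative.

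The main obstacle is \eqref{A3}: it is a pointwise inequality, and it is saturated in the limit $|x|\to\infty$, so the interior gradient contribution of the correction $B_\ep$ must not be allowed to push the left-hand side past the threshold $(2-\alpha)/(N-\alpha)+\ep$ anywhere. The purpose of the additive constant $K_\ep$ is precisely to resolve this: adding a constant changes neither $\nabla A_\ep$ nor $\Delta A_\ep$, so \eqref{A1} is untouched, while $a(x)A_\ep(x)$ is strictly enlarged, lowering the left-hand side of \eqref{A3}. After fixing $R_\ep$ and $B_\ep$, choosing $K_\ep$ sufficiently large dominates the interior gradient error, while the exterior limit remains $(2-\alpha)/(N-\alpha)$ since $K_\ep$ is negligible compared to $A_0(x)\to\infty$. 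The overall scheme mirrors \cite{SoWa17_AIMS} for $\alpha\in[0,1)$ and \cite{SoWa18_ADE} for $\alpha<0$; the remaining range $\alpha\in[1,\min\{2,N\})$ follows by the same argument since each estimate depends only on $\alpha<2$ and $\alpha<N$ together with the polynomial behavior of $a$ at infinity.
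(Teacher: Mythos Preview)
Your overall strategy---take the explicit profile $A_0=\kappa\langle x\rangle^{2-\alpha}$, add a local correction to repair \eqref{A1} in a ball, then add a large constant $K_\ep$ to force \eqref{A3} on that ball---is exactly the scheme behind the references the paper cites, and the asymptotic computations are correct (modulo a typo: $|\nabla\langle x\rangle^{2-\alpha}|^2=(2-\alpha)^2\langle x\rangle^{-2\alpha}|x|^2$, without the extra $\langle x\rangle^{-2}$; your stated limit $(2-\alpha)/(N-\alpha)$ is the one that follows from the correct formula).

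There is, however, a genuine gap in the interior step. You propose to solve $\Delta B=a-\Delta A_0$ on a ball $B(0,R_\ep+2)$ with Dirichlet data and then multiply by a cutoff $\chi$ supported in $\{|x|\le R_\ep+1\}$. The Laplacian of $\chi B$ picks up the extra terms $2\nabla\chi\cdot\nabla B+B\,\Delta\chi$ on the transition annulus, and these are \emph{not} controlled by the smallness of $a-\Delta A_0$ there: $B$ and $\nabla B$ on the annulus feel the full source on the whole ball, which is of fixed size independent of $R_\ep$. Rescaling shows the error on the annulus is of order at least $R_\ep\,\|a-\Delta A_0\|_{L^\infty(B(0,R_\ep))}$, while the target bound $(\ep/2)a(x)$ is of order $R_\ep^{-\alpha}$; the two are not comparable as $R_\ep\to\infty$, so the absorption you claim fails.

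The standard repair is to cut off the \emph{source} rather than the solution: set $f_\ep=\eta_\ep\,(a-\Delta A_0)$ with a cutoff $\eta_\ep$ equal to $1$ on $\{|x|\le R_\ep\}$, and let $B_\ep$ be its Newtonian potential on $\R^N$. Then $\Delta(A_0+B_\ep)=(1-\eta_\ep)\Delta A_0+\eta_\ep a$ is a pointwise convex combination, so \eqref{A1} holds everywhere once $R_\ep$ is large. The potential $B_\ep$ is no longer compactly supported, but $|\nabla B_\ep(x)|=O(|x|^{1-N})$ (bounded when $N=1$), which is $o(|\nabla A_0(x)|)=o(|x|^{1-\alpha})$ since $\alpha<N$; hence \eqref{A3} survives at infinity, and your additive constant $K_\ep$ handles the compact region exactly as you describe. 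Growth of $B_\ep$ in low dimensions ($\log|x|$ for $N=2$, $|x|$ for $N=1$) is dominated by $\langle x\rangle^{2-\alpha}$ because $\alpha<\min\{2,N\}$, so \eqref{A2} is unaffected. With this modification your argument goes through.
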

\begin{remark}
It is enough to find an exact solution of \eqref{eq:ell}
satisfying \eqref{A2} and \eqref{A3}
to construct a supersolution of parabolic equation \eqref{eq:sect3}. 
If $a(x)$ is radially symmetric, 
the solution $A$ of Poisson equation was constructed 
in Todorova--Yordanov \cite{ToYo09}. 
In subsequent papers, 
suitable profiles for more general problem, 
such as damped wave equation with 
space-dependent diffusion, have been considered 
(see e.g., 
Radu--Todorova--Yordanov \cite{RaToYo09,RaToYo10}).
However, such a solution with expected properties 
does not exist in general (especially in the case where $a(x)$ is not radially symmetric, see \cite[Remark 3.1]{SoWa16_JDE}). 
\end{remark}
\subsection{Supersolution of corresponding parabolic equation}

For a function
$\varphi \in C^2([0,\infty))$
and
$(x,t) \in \mathbb{R}^N \times [0,\infty)$,
we put 
\begin{align}\label{pre-Phi}
\Phi(x,t)
=(t_0+t)^{-\beta}
\varphi\left(z\right), \quad z=\frac{\gamma A_\ep(x)}{t_0+t}
\end{align}
with some constants $\beta,\gamma>0$ and $t_0 \ge 1$.
Here,
$A_\ep(x)$
is the function constructed in Lemma \ref{lem_A_ep}
with a constant
$\varepsilon \in (0,1)$.
We will specify the function
$\varphi$
later, and here we first show
the following lemma. 
\begin{lemma}\label{lem.3.3}
Let $\varphi\in C^2([0,\infty))$ and 
$\Phi$ be as in \eqref{pre-Phi}. Then 
\begin{equation}\label{eq:Phi}%
	a(x)\pa_t \Phi(x,t)-\Delta \Phi(x,t)
	=
	-a(x)(t_0+t)^{-\beta-1}
		\left(
			\beta \varphi(z)
			+z \varphi'(z)
			+\gamma\,\frac{\Delta A_\ep(x)}{a(x)} \varphi'(z)
			+\gamma\, \frac{|\nabla A_\ep(x)|^2}{a(x)A_\ep(x)}z \varphi''(z)
		\right)
\end{equation}%
holds for
$(x,t) \in \mathbb{R}^N \times [0,\infty)$.
\end{lemma}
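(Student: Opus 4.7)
The identity is a direct chain rule computation, so my plan is just to organize the differentiations cleanly and factor at the end. The whole proof will be a few lines.

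First I would set $z = \gamma A_\ep(x)/(t_0+t)$ and record the partial derivatives of $z$: $\pa_t z = -z/(t_0+t)$, $\nabla z = \gamma\nabla A_\ep(x)/(t_0+t)$, and $\Delta z = \gamma \Delta A_\ep(x)/(t_0+t)$, so that $|\nabla z|^2 = \gamma^2|\nabla A_\ep(x)|^2/(t_0+t)^2$. These are the only facts about the function $A_\ep$ that enter the computation.

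Next I would differentiate $\Phi(x,t) = (t_0+t)^{-\beta}\varphi(z)$ in time, using the product rule, to obtain
\[
\pa_t\Phi(x,t) = -(t_0+t)^{-\beta-1}\bigl(\beta\varphi(z) + z\varphi'(z)\bigr).
\]
Then I would compute the Laplacian: since $\Phi$ depends on $x$ only through $z$,
\[
\Delta\Phi(x,t) = (t_0+t)^{-\beta}\bigl(\varphi''(z)|\nabla z|^2 + \varphi'(z)\Delta z\bigr)
= (t_0+t)^{-\beta-1}\gamma\Delta A_\ep(x)\,\varphi'(z) + (t_0+t)^{-\beta-2}\gamma^2|\nabla A_\ep(x)|^2\varphi''(z).
\]

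Finally I would assemble $a(x)\pa_t\Phi - \Delta\Phi$, pull out the common factor $-a(x)(t_0+t)^{-\beta-1}$, and recognize that the coefficient of $\varphi''(z)$ becomes
\[
\frac{\gamma^2|\nabla A_\ep(x)|^2}{a(x)(t_0+t)} = \frac{|\nabla A_\ep(x)|^2}{a(x)A_\ep(x)}\cdot\frac{\gamma A_\ep(x)}{t_0+t}\cdot\gamma = \gamma\,\frac{|\nabla A_\ep(x)|^2}{a(x)A_\ep(x)}\,z,
\]
which is precisely the last term inside the parentheses in \eqref{eq:Phi}. There is no real obstacle here: the only subtlety is keeping track of the powers of $(t_0+t)$ and using the identity $\gamma A_\ep(x)/(t_0+t) = z$ to rewrite the $\varphi''$ coefficient in the stated form.
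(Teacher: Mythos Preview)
Your proposal is correct and follows essentially the same approach as the paper: a direct chain-rule computation of $\pa_t\Phi$ and $\Delta\Phi$, followed by factoring out $-a(x)(t_0+t)^{-\beta-1}$ and using $z=\gamma A_\ep(x)/(t_0+t)$ to rewrite the $\varphi''$ coefficient. The paper's proof is organized slightly differently (it does not separately record the derivatives of $z$), but the content is the same.
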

\begin{proof}
By direct calculation we have 
\begin{align*}
	a(x)\pa_t \Phi(x,t)
	&=a(x)
	(t_0+t)^{-\beta-1}
	\left(
	-
	\beta 
	\varphi\left(z\right)
	-
	z
	\varphi'\left(z\right)
	\right),
\end{align*}
where
$z=\gamma A_\ep(x)/(t_0+t)$.
On the one hand, we see that 
\begin{align*}
\Delta \Phi(x,t)
&=
(t_0+t)^{-\beta}
\left(\gamma\,\frac{\Delta A_\ep(x)}{t_0+t}
\varphi'\left(z\right)
+
\gamma^2\,\frac{ |\nabla A_\ep(x)|^2}{(t_0+t)^2}
\varphi''\left(z\right)
\right)
\\
&=
a(x)(t_0+t)^{-\beta-1}
\left(\gamma \frac{\Delta A_\ep(x)}{a(x)}
\varphi'\left(z\right)
+
\frac{\gamma|\nabla A_\ep(x)|^2}{a(x) A_\ep(x)}
z\varphi''\left(z\right)
\right).
\end{align*}
Therefore we have \eqref{eq:Phi}.
\end{proof}

In particular, we choose 
\begin{align}
\label{gammatilde}
	\widetilde{\gamma}_\ep=\left(\frac{2-\alpha}{N-\alpha}+\ep\right)^{-1}, \quad 
	\gamma_\ep=(1-\ep)
	\widetilde{\gamma}_\ep.
\end{align}
and 
$\varphi$ by using the Kummer confluent hypergeometric function.
\begin{definition}\label{phi.beta}
For $\beta\in \R$, define 
\[
	\varphi_{\beta,\ep}(s)=e^{-s}M\left(\gamma_\ep-\beta, \gamma_\ep; s\right),
	\quad s\geq 0.
\]
We remark that $\varphi_{\beta,\ep}$ is a unique (modulo constant multiple)
solution of
the equation 
\begin{align}
\label{eq:varphi}
	s\varphi''(s)+(\gamma_\ep+s)\varphi'(s)+\beta\varphi(s)=0 
\end{align}
with bounded derivative near $s=0$. 
\end{definition}
Then we have the following properties of $\varphi_{\beta,\ep}$. 

\begin{lemma}\label{lem_varphi}
The function
$\varphi_{\beta,\ep}$
satisfies the following:
\begin{itemize}
\item[\rm (i)]
If $\beta \in [0,\gamma_\ep)$,
then
$\varphi_{\beta,\ep}$
has the lower and upper bounds
\[
	k_{\beta,\ep}(1+s)^{-\beta}\leq \varphi_{\beta,\ep}(s)\leq K_{\beta,\ep}(1+s)^{-\beta}
\]
with some constants
$k_{\beta,\varepsilon}, K_{\beta,\varepsilon} > 0$.
\item[{\rm (ii)}]
If $\beta \ge 0$,
then
$\varphi_{\beta,\ep}$
has the upper bound
\begin{align*}
	|\varphi_{\beta,\varepsilon}(s)|
		\le K_{\beta,\varepsilon} (1+s)^{-\beta}
\end{align*}
with some constant
$K_{\beta,\varepsilon} > 0$.
\item[\rm (iii)]
If
$\beta \ge 0$,
then
$\varphi_{\beta,\ep}(s)$
and
$\varphi_{\beta+1,\ep}(s)$
satisfy the recurrence relation
\[
	\beta\varphi_{\beta,\ep}(s)
	+
	s\varphi_{\beta,\ep}'(s)
	=
	\beta\varphi_{\beta+1,\ep}(s).
\]
\item[\rm (iv)]
If $0<\beta <\gamma_\ep$,
the first and the second derivatives of
$\varphi_{\beta,\ep}(s)$
have negative and positive signs,
respectively:
\[
	\varphi_{\beta,\ep}'(s)
	=-\frac{\beta}{\gamma_\ep} e^{-s}M(\gamma_\ep-\beta,\gamma_\ep+1;s)<0, 
	\quad 
	\varphi_{\beta,\ep}''(s)
	=\frac{\beta(\beta+1)}{\gamma_\ep(\gamma_\ep+1)} e^{-s}M(\gamma_\ep-\beta,\gamma_\ep+2;s)>0.
\]
\end{itemize}
\end{lemma}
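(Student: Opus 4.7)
The plan is to derive all four assertions by direct manipulation of the Kummer function $M(\gamma_\ep-\beta,\gamma_\ep;s)$ using the properties catalogued in Lemma \ref{lem_Kummer}. The payoff of the somewhat unusual choice of parameters is that the differences $c-b = \beta$ and $c = \gamma_\ep$ appear cleanly in each computation, so the hypotheses of Lemma \ref{lem_Kummer}(ii)--(iv) are all satisfied as soon as one records that $-\gamma_\ep\notin\N\cup\{0\}$ (since $\gamma_\ep>0$) and, when relevant, that $\gamma_\ep-\beta>0$.

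For (i) and (ii), I would first observe that for $\beta\in[0,\gamma_\ep)$ we have $c=\gamma_\ep \ge b=\gamma_\ep-\beta>0$, so Lemma \ref{lem_Kummer}(ii) gives both positivity of $M(\gamma_\ep-\beta,\gamma_\ep;s)$ on $[0,\infty)$ and the asymptotic $M(\gamma_\ep-\beta,\gamma_\ep;s)\sim \frac{\Gamma(\gamma_\ep)}{\Gamma(\gamma_\ep-\beta)}s^{-\beta}e^{s}$ as $s\to\infty$. Multiplying by $e^{-s}$ produces $\varphi_{\beta,\ep}(s)\sim Cs^{-\beta}$, and combining with $\varphi_{\beta,\ep}(0)=M(\gamma_\ep-\beta,\gamma_\ep;0)=1$ and continuity on compact subsets yields (i). For (ii), I drop the assumption $\beta<\gamma_\ep$ and apply instead Lemma \ref{lem_Kummer}(iii), which supplies $|M(\gamma_\ep-\beta,\gamma_\ep;s)|\le C_{\beta,\ep}(1+s)^{-\beta}e^{s}$ under just $c\ge b$ (i.e.\ $\beta\ge 0$), so that $|\varphi_{\beta,\ep}(s)|\le C_{\beta,\ep}(1+s)^{-\beta}$.

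For (iii), I would compute $\varphi_{\beta,\ep}'(s) = e^{-s}\bigl(-M(\gamma_\ep-\beta,\gamma_\ep;s)+M'(\gamma_\ep-\beta,\gamma_\ep;s)\bigr)$ and then invoke the first identity in Lemma \ref{lem_Kummer}(iv) with $b=\gamma_\ep-\beta$, $c=\gamma_\ep$, namely
\[
sM'(\gamma_\ep-\beta,\gamma_\ep;s) = sM(\gamma_\ep-\beta,\gamma_\ep;s)-\beta M(\gamma_\ep-\beta,\gamma_\ep;s)+\beta M(\gamma_\ep-(\beta+1),\gamma_\ep;s),
\]
so that $\beta\varphi_{\beta,\ep}(s)+s\varphi_{\beta,\ep}'(s)=\beta e^{-s}M(\gamma_\ep-(\beta+1),\gamma_\ep;s)=\beta\varphi_{\beta+1,\ep}(s)$ after cancellation.

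For (iv), I would apply the second identity of Lemma \ref{lem_Kummer}(iv) (with $c=\gamma_\ep$, then with $c=\gamma_\ep+1$) to peel off one differentiation at a time:
\[
M'(\gamma_\ep-\beta,\gamma_\ep;s) = M(\gamma_\ep-\beta,\gamma_\ep;s)-\tfrac{\beta}{\gamma_\ep}M(\gamma_\ep-\beta,\gamma_\ep+1;s),
\]
which after substitution into $\varphi_{\beta,\ep}' = e^{-s}(-M+M')$ gives the stated formula for $\varphi_{\beta,\ep}'$. Differentiating once more and using the same identity at level $c=\gamma_\ep+1$ yields the formula for $\varphi_{\beta,\ep}''$. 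The sign statements follow because $0<\gamma_\ep-\beta<\gamma_\ep+1<\gamma_\ep+2$, so Lemma \ref{lem_Kummer}(ii) applies to both $M(\gamma_\ep-\beta,\gamma_\ep+1;s)$ and $M(\gamma_\ep-\beta,\gamma_\ep+2;s)$, guaranteeing they are strictly positive. There is no real obstacle here; the only point that deserves care is to verify at the outset that $\gamma_\ep-\beta>0$ (needed in (i) and (iv) but not in (ii)) and that $\gamma_\ep$ is not a non-positive integer, which is automatic from the definition $\gamma_\ep=(1-\ep)((2-\alpha)/(N-\alpha)+\ep)^{-1}>0$.
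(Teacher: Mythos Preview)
Your proposal is correct and follows essentially the same route as the paper's proof: parts (i) and (ii) come directly from Lemma~\ref{lem_Kummer}(ii)--(iii), part (iii) from the first contiguous relation in Lemma~\ref{lem_Kummer}(iv), and part (iv) by applying the second contiguous relation twice (at $c=\gamma_\ep$ and then $c=\gamma_\ep+1$) together with the positivity in Lemma~\ref{lem_Kummer}(ii). The computations and the sign checks you outline match the paper's argument line for line.
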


\begin{proof}
(i) From Lemma \ref{lem_Kummer} (ii), we obtain
\[
	k_{\beta, \varepsilon} e^s (1+s)^{-\beta}
	\le M\left(\gamma_\ep-\beta, \gamma_\ep; s\right)
	\le K_{\beta, \varepsilon} e^s (1+s)^{-\beta}
\]
for
$s\ge 0$
with some positive constants
$k_{\beta, \varepsilon}$, $K_{\beta, \varepsilon}$.
This implies (i).

\noindent
(ii) Similarly to (i), Lemma \ref{lem_Kummer} (iii) implies
\begin{align*}
	|M\left(\gamma_\ep-\beta, \gamma_\ep; s\right)|
		\le K_{\beta,\varepsilon} (1+s)^{-\beta} e^s.
\end{align*}
This and the definition of
$\varphi_{\beta,\varepsilon}$
lead to (ii).

\noindent
(iii)
Noting
\begin{align}%
\label{varphi'}
	\varphi_{\beta,\ep}'(s)
	= e^{-s} \left( - M(\gamma_{\ep} - \beta, \gamma_{\ep}; s) + M'(\gamma_{\ep} - \beta, \gamma_{\ep}; s) \right)
\end{align}%
and the first assertion of Lemma \ref{lem_Kummer} (iv),
we have
\begin{align*}%
	&\beta\varphi_{\beta,\ep}(s)
	+
	s\varphi_{\beta,\ep}'(s) \\
	&=
		e^{-s} \left( \beta M(\gamma_{\ep}-\beta, \gamma_{\ep}; s)
			-s M(\gamma_{\ep}-\beta, \gamma_{\ep}; s)
			+s M'(\gamma_{\ep}-\beta, \gamma_{\ep}; s)
				\right) \\
	&=
		\beta e^{-s} M(\gamma_{\ep}-\beta-1, \gamma_{\ep}; s) \\
	&= \beta\varphi_{\beta+1,\ep}(s).
\end{align*}%

\noindent
{\rm (iv)}
The second assertion of Lemma \ref{lem_Kummer} (iv) implies
\begin{align*}%
	\gamma_{\ep} M'(\gamma_{\ep}-\beta, \gamma_{\ep}; s)
	&= \gamma_{\ep} M(\gamma_{\ep}-\beta, \gamma_{\ep}; s)
		- \beta M(\gamma_{\ep}-\beta, \gamma_{\ep}+1; s).
\end{align*}%
From this and \eqref{varphi'}, we obtain
\begin{align*}%
	\varphi_{\beta,\ep}'(s)
	=-\frac{\beta}{\gamma_\ep} e^{-s}M(\gamma_\ep-\beta,\gamma_\ep+1;s).
\end{align*}%
Since $0< \beta < \gamma_{\ep}$,
Lemma \ref{lem_Kummer} (ii) shows
$M(\gamma_\ep-\beta,\gamma_\ep+1;s) > 0$
and hence,
$\varphi_{\beta,\ep}'(s) < 0$.

Next, we compute
\begin{align*}%
	\varphi_{\beta,\ep}''(s)
	&=-\frac{\beta}{\gamma_\ep} e^{-s}
		\left( -M(\gamma_\ep-\beta,\gamma_\ep+1;s) + M'(\gamma_\ep-\beta,\gamma_\ep+1;s) \right).
\end{align*}%
Applying the second assertion of Lemma \ref{lem_Kummer} (iv), we have
\begin{align*}%
	(\gamma_{\ep}+1) M'(\gamma_{\ep}-\beta, \gamma_{\ep}+1; s)
	&= (\gamma_{\ep}+1) M(\gamma_{\ep}-\beta, \gamma_{\ep}+1; s)
		- (\beta+1) M(\gamma_{\ep}-\beta, \gamma_{\ep}+2; s),
\end{align*}%
and hence,
\begin{align*}%
	\varphi_{\beta,\ep}''(s)
	&=\frac{\beta(\beta+1)}{\gamma_\ep(\gamma_{\ep}+1)} e^{-s}
		M(\gamma_{\ep}-\beta, \gamma_{\ep}+2; s).
\end{align*}%
Noting again that
$0<\beta < \gamma_{\ep}$,
we see from Lemma \ref{lem_Kummer} (ii) that
$M(\gamma_{\ep}-\beta, \gamma_{\ep}+2; s) > 0$,
and hence,
$\varphi_{\beta,\ep}''(s) > 0$.
\end{proof}

Here we define a family of functions $\{\Phi_{\beta,\ep}\}_{\beta}$ which will be proved to fulfill all conditions in 
Theorem \ref{thm:main}. 

\begin{definition}\label{phi.beta.ep}
For
$(x,t) \in \mathbb{R}^N \times [0,\infty)$,
we define 
\[
	\Phi_{\beta,\ep}(x,t; t_0)=(t_0+t)^{-\beta}\varphi_{\beta,\ep}(z), 
	\quad 
	z=\frac{\widetilde{\gamma}_\ep A_\ep(x)}{t_0+t},
\]
where
$\ep \in (0,1)$,
$\widetilde \gamma_{\ep}$ is the constant given in \eqref{gammatilde},
$\beta$ is a constant satisfying $\beta \in (0,\widetilde{\gamma_{\ep}})$,
$t_0 \ge 1$,
$\varphi_{\beta,\ep}$
is the function defined by Definition \ref{phi.beta},
and
$A_{\ep}(x)$ is the function constructed in Lemma \ref{lem_A_ep}.
\end{definition}

The function
$\Phi_{\beta,\ep}(x,t)$
defined above is a supersolution of the equation \eqref{eq:a-heat}.

\begin{lemma}\label{lem:super-sol}
The function
$\Phi_{\beta,\ep}(x,t;t_0)$
satisfies
\begin{align*}
	a(x)\pa_t\Phi_{\beta,\ep}(x,t;t_0)-\Delta \Phi_{\beta,\ep}(x,t;t_0)\geq 0,\quad
	(x,t) \in \mathbb{R}^N \times [0,\infty).
\end{align*}
\end{lemma}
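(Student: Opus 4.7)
The plan is to apply the identity \eqref{eq:Phi} from Lemma \ref{lem.3.3} with the choice $\gamma=\widetilde{\gamma}_\ep$, and then eliminate the combination $\beta\varphi(z)+z\varphi'(z)$ using the fact that $\varphi_{\beta,\ep}$ solves the Kummer-type ODE \eqref{eq:varphi}. After this substitution the bracket in \eqref{eq:Phi} becomes a linear combination of $z\varphi_{\beta,\ep}''(z)$ and $\varphi_{\beta,\ep}'(z)$ whose coefficients depend on $\Delta A_\ep/a$ and $|\nabla A_\ep|^2/(aA_\ep)$. The whole task is then to check that these coefficients have exactly the signs that, together with the known signs of $\varphi'_{\beta,\ep}$ and $\varphi''_{\beta,\ep}$ from Lemma \ref{lem_varphi}(iv), force the bracket to be nonpositive.

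Concretely, using \eqref{eq:varphi} to rewrite $\beta\varphi(z)+z\varphi'(z)=-z\varphi''(z)-\gamma_\ep\varphi'(z)$ and substituting into \eqref{eq:Phi}, I expect the bracket there to take the form
$$\left(\widetilde{\gamma}_\ep\,\frac{|\nabla A_\ep(x)|^2}{a(x)A_\ep(x)}-1\right)z\varphi_{\beta,\ep}''(z)+\left(\widetilde{\gamma}_\ep\,\frac{\Delta A_\ep(x)}{a(x)}-\gamma_\ep\right)\varphi_{\beta,\ep}'(z).$$
Now I invoke Lemma \ref{lem_A_ep}: the estimate \eqref{A3} gives $\widetilde{\gamma}_\ep\,|\nabla A_\ep|^2/(aA_\ep)\leq 1$, so the first coefficient is $\leq 0$, while \eqref{A1} together with the definition $\gamma_\ep=(1-\ep)\widetilde{\gamma}_\ep$ yields $\widetilde{\gamma}_\ep\,\Delta A_\ep/a\geq (1-\ep)\widetilde{\gamma}_\ep=\gamma_\ep$, so the second coefficient is $\geq 0$. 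Lemma \ref{lem_varphi}(iv) supplies $z\varphi_{\beta,\ep}''(z)\geq 0$ and $\varphi_{\beta,\ep}'(z)\leq 0$ on $[0,\infty)$, so each term is nonpositive. Multiplying by the prefactor $-a(x)(t_0+t)^{-\beta-1}<0$ in \eqref{eq:Phi} concludes $a(x)\pa_t\Phi_{\beta,\ep}-\Delta\Phi_{\beta,\ep}\geq 0$.

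The real content of the argument is the bookkeeping: the constants $\widetilde{\gamma}_\ep$ and $\gamma_\ep$ have been engineered so that both coefficients above receive the correct sign simultaneously. The main obstacle in writing up the proof cleanly is therefore not analytic but algebraic, namely combining \eqref{eq:Phi}, \eqref{eq:varphi}, \eqref{A1}, \eqref{A3} and the sign information of Lemma \ref{lem_varphi}(iv) in precisely the order that makes the two structural cancellations visible. One small point to verify along the way is that the argument $z=\widetilde{\gamma}_\ep A_\ep(x)/(t_0+t)$ is nonnegative (immediate from \eqref{A2}), so that the sign statements of Lemma \ref{lem_varphi}(iv) apply pointwise on $\R^N\times[0,\infty)$.
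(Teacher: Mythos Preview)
Your proposal is correct and follows essentially the same route as the paper: apply Lemma~\ref{lem.3.3} with $\gamma=\widetilde{\gamma}_\ep$, replace $\beta\varphi_{\beta,\ep}(z)+z\varphi_{\beta,\ep}'(z)$ via the ODE \eqref{eq:varphi}, and then use \eqref{A1}, \eqref{A3} together with the sign information of Lemma~\ref{lem_varphi}(iv). The only difference is cosmetic---the paper pulls the resulting two terms outside the bracket and writes them with separate prefactors, whereas you leave them inside and argue the bracket is $\leq 0$; the substance is identical.
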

\begin{proof}
We note that the equation \eqref{eq:varphi} implies
\begin{align*}%
	\beta \varphi_{\beta,\ep}(z)
	+z \varphi_{\beta,\ep}'(z)
	&=
		- \gamma_{\ep} \varphi_{\beta,\ep}'(z)
		- z \varphi_{\beta,\ep}''(z) \\
	&= - (1-\ep) \widetilde{\gamma_{\ep}} \varphi_{\beta,\ep}'(z)
		- z \varphi_{\beta,\ep}''(z).
\end{align*}%
Using the above and Lemma \ref{lem.3.3}, we calculate
\begin{align*}
&a(x)\pa_t \Phi_{\beta,\ep}(x,t)-\Delta \Phi_{\beta,\ep}(x,t)
\\
&=
-a(x)(t_0+t)^{-\beta-1}
\left(
\beta \varphi_{\beta,\ep}(z)
+z \varphi_{\beta,\ep}'(z)
+\widetilde{\gamma}_\ep\,\frac{\Delta A_\ep(x)}{a(x)} \varphi_{\beta,\ep}'(z)
+\widetilde{\gamma}_\ep\, \frac{| \nabla A_\ep(x)|^2}{a(x)A_\ep(x)}z \varphi_{\beta,\ep}''(z)
\right)
\\
&=
\widetilde{\gamma}_\ep a(x)(t_0+t)^{-\beta-1}
\left(
1-\ep-\frac{\Delta A_\ep(x)}{a(x)} 
\right)\varphi_{\beta,\ep}'(z)
+a(x)(t_0+t)^{-\beta-1}
z\left(
1-\widetilde{\gamma}_\ep\, \frac{| \nabla A_\ep(x)|^2}{a(x)A_\ep(x)}
\right)\varphi_{\beta,\ep}''(z).
\end{align*}
Since it follows from the construction of $A_\ep$ that 
\[
1-\ep-\frac{\Delta A_\ep(x)}{a(x)}\leq 0, 
\quad
1-\widetilde{\gamma}_{\ep}\,\frac{| \nabla A_\ep(x)|^2}{a(x)A_\ep(x)}\geq 0,
\]
we see from Lemma \ref{lem_varphi} {\rm (iii)} that $\Phi_{\beta,\ep}$ satisfies 
$a(x)\pa_t\Phi_{\beta,\ep}(x,t)-\Delta \Phi_{\beta,\ep}(x,t)\geq 0$
for all $x\in \mathbb{R}^N$
and
$t\geq 0$. 
\end{proof}

The function $\Phi_{\beta,\varepsilon}$ also satisfies
the following recurrence relation.
\begin{lemma}\label{lem.dtphi}
Let $\Phi_{\beta,\varepsilon}$ be defined in Definition \ref{phi.beta.ep}.
Then, for $\beta \ge 0$, we have
\begin{align*}
	\partial_t \Phi_{\beta,\varepsilon}(t;t_0) = -\beta \Phi_{\beta+1,\varepsilon}(t;t_0).
\end{align*}
\end{lemma}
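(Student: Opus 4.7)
The plan is to prove this identity by a direct computation, applying the product and chain rules to $\Phi_{\beta,\varepsilon}$ and then invoking the recurrence relation for $\varphi_{\beta,\varepsilon}$ established in Lemma \ref{lem_varphi}(iii).

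First I would write $\Phi_{\beta,\varepsilon}(x,t;t_0) = (t_0+t)^{-\beta}\varphi_{\beta,\varepsilon}(z)$ with $z = \widetilde{\gamma}_\varepsilon A_\varepsilon(x)/(t_0+t)$, and note that the spatial argument $x$ plays no role in the differentiation: the only $t$-dependence enters through the prefactor $(t_0+t)^{-\beta}$ and through $z$. A direct computation gives
\begin{equation*}
\partial_t z = -\frac{\widetilde{\gamma}_\varepsilon A_\varepsilon(x)}{(t_0+t)^2} = -\frac{z}{t_0+t}.
\end{equation*}
Applying the product rule and the chain rule, one obtains
\begin{equation*}
\partial_t \Phi_{\beta,\varepsilon}(x,t;t_0)
= -\beta(t_0+t)^{-\beta-1}\varphi_{\beta,\varepsilon}(z)
+ (t_0+t)^{-\beta}\varphi_{\beta,\varepsilon}'(z)\cdot\left(-\frac{z}{t_0+t}\right)
= -(t_0+t)^{-\beta-1}\bigl(\beta\varphi_{\beta,\varepsilon}(z)+z\varphi_{\beta,\varepsilon}'(z)\bigr).
\end{equation*}

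Next I would invoke Lemma \ref{lem_varphi}(iii), which asserts exactly $\beta\varphi_{\beta,\varepsilon}(s)+s\varphi_{\beta,\varepsilon}'(s)=\beta\varphi_{\beta+1,\varepsilon}(s)$, to rewrite the bracket. This yields
\begin{equation*}
\partial_t \Phi_{\beta,\varepsilon}(x,t;t_0)
= -\beta(t_0+t)^{-\beta-1}\varphi_{\beta+1,\varepsilon}(z),
\end{equation*}
and recognizing the right-hand side as $-\beta\,\Phi_{\beta+1,\varepsilon}(x,t;t_0)$ (with the same $z=\widetilde{\gamma}_\varepsilon A_\varepsilon(x)/(t_0+t)$) completes the proof.

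There is no real obstacle here: the statement is essentially a translation of the Kummer-type recurrence relation (iii) of Lemma \ref{lem_varphi} into the self-similar variables $(t_0+t)^{-\beta}\varphi_{\beta,\varepsilon}(z)$. The only minor subtlety is to make sure the index shift $\beta \mapsto \beta+1$ is consistent with Definition \ref{phi.beta.ep}, which requires $\beta+1 \in (0,\widetilde{\gamma}_\varepsilon)$ if one wants to reuse the bounds of Lemma \ref{lem_varphi}(i); however, for the identity itself this restriction is not needed, since Lemma \ref{lem_varphi}(iii) holds for all $\beta\ge 0$.
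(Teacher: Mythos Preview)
Your proof is correct and is exactly the computation the paper has in mind: the paper simply states that the lemma ``immediately follows from Lemma \ref{lem_varphi} (iii)'' and omits the details, and your argument fills in precisely those details via the product/chain rule and the recurrence $\beta\varphi_{\beta,\ep}(s)+s\varphi_{\beta,\ep}'(s)=\beta\varphi_{\beta+1,\ep}(s)$. Your remark about the range of $\beta$ is also apt and consistent with how the paper uses the lemma.
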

The lemma above immediately follows from Lemma \ref{lem_varphi} (iii)
and we omit the detail.

\begin{proof}[Proof of Theorem \ref{thm:main}]
Put $\beta=\frac{2\sigma}{2-\alpha}$ 
and take $\ep>0$ sufficiently small so that $\beta<\widetilde{\gamma}_{\ep}$, and let $t_0\geq 1$. We define
\[
\overline{U}_{D,\sigma}(x,t)=\widetilde{K} \Phi_{\beta,\ep}(x,t;t_0), \quad (x,t)\in \Omega\times [0,\infty),
\]
where $\widetilde{K}$ is a positive constant specified later. 
Then Lemma \ref{lem:super-sol} yields that 
$\Phi_{\beta,\ep}(x,t;t_0)$ is a supersolution of $a(x)\pa_tU-\Delta U=0$ in $\R^N$. 
Concerning the initial value and the boundary condition, 
we see from Lemma \ref{lem_varphi} (i) 
and Lemma \ref{lem_A_ep} that 
\begin{align}\label{eq:middle}
\widetilde{K}k_{\beta,\ep}
\Big(t_0+t+C_\ep \lr{x}^{2-\alpha}\Big)^{-\frac{2\sigma}{2-\alpha}}
\leq 
\overline{U}_{D,\sigma}(x,t)
\leq 
\widetilde{K}K_{\beta,\ep}
\Big(t_0+t+c_\ep \lr{x}^{2-\alpha}\Big)^{-\frac{2\sigma}{2-\alpha}}.
\end{align}
Taking $\widetilde{K}=k_{\beta,\ep}^{-1}(t_0+C_\ep)^{\frac{2\sigma}{2-\alpha}}$, we have $\overline{U}_{D,\sigma}(x,0)\geq \lr{x}^{-2\sigma}=w(x)$ for $x\in \Omega$. 
Since $\Phi_{\beta,\ep}(x,t;t_0)$ 
is positive in $\R^N\times [0,\infty)$, 
$\overline{U}_{D,\sigma}$ satisfies 
the nonnegativity condition 
on $\pa\Omega\times [0,\infty)$.
As a consequence,
$\overline{U}_{D,\sigma}$ is a supersolution of the problem \eqref{eq:D-heat}. 
The estimate \eqref{eq:two-sided} 
follows from \eqref{eq:middle} and 
the other estimate \eqref{eq:deri-bdd} 
follows from 
Lemma \ref{lem.dtphi} and Lemma \ref{lem_varphi} (ii).  
The proof is now complete. 
\end{proof}

\section{Application to weighted $L^2$-estimates for diffusion equations}\label{sec:heat}

In this section,
as an application of supersolutions to \eqref{eq:D-heat} in Theorem \ref{thm:main}, 
we discuss some weighted $L^2$-estimates for
the initial-boundary value problem of the linear diffusion equation
\eqref{eq:D-heat-inibdry} which is equivalent to 
the following problem in terms of $a(x)=D(x)^{-1}$:
\begin{equation}%
\label{de}
\begin{cases}
a(x)\pa_tu(x,t)- \Delta u(x,t)=0,
&
(x,t)\in \Omega\times (0,\infty),
\\
u(x,t)=0,&
(x,t)\in \pa\Omega\times (0,\infty),
\\
u(0,x)=f(x), 
&
x \in \Omega.
\end{cases}
\end{equation}%
Here,
$\Omega$
is an exterior domain in
$\mathbb{R}^N$ with smooth boundary $\pa\Omega$ 
with $N\ge 2$,
namely,
$\mathbb{R}^N \setminus \Omega$
is compact. The operator $a(x)^{-1}\Delta$ 
is formally symmetric in the following Hilbert space 
$L^2_{d\mu}=L^2_{d\mu}(\Omega)$ with the inner product $(\cdot,\cdot)_{L^2_{d\mu}}$:
\[
L^2_{d\mu}(K):=\left\{f\in L^2_{\rm loc}(K)
\;;\;
\|f\|_{L^2_{d\mu}(K)}=\left(\int_{K}|f|^2\,d\mu\right)^{\frac{1}{2}}<\infty\right\}, \quad 
(f,g)_{L^2_{d\mu}(K)}=\int_{K}fg\,d\mu, \quad d\mu=a(x)dx.
\]
According to the analysis of \cite[Section 2]{SoWa16_JDE},
the corresponding bilinear closed form is given by 
\[
\mathfrak{a}(u,v)=\int_{\Omega}\nabla u\cdot\nabla v\,dx, \quad 
D(\mathfrak{a})=
\left\{u\in L^2_{d\mu}\cap \dot{H}^{1}(\Omega)\;;\;
\int_{\Omega}\frac{\pa u}{\pa x_j}\varphi\,dx=
-\int_{\Omega}u\frac{\pa \varphi}{\pa x_j}, \ \forall \varphi\in C_c^\infty(\R^N)\right\}.
\]
Note that a similar proof to \cite[Lemma 2.1]{SoWa16_JDE} works 
for $\alpha<2$. 
Then we can use the Friedrichs extension $-L$ of $-a(x)\Delta$ in $L^2_{d\mu}$ 
as the associated operator of the closed form $\mathfrak{a}$. 

\begin{lemma}[{\cite[Lemma 2.2]{SoWa16_JDE}}]\label{lem.4.1}
The operator $L$ in $L^2_{d\mu}$ 
defined by 
\begin{align*}
D(L)
&=\left\{u\in D(\mathfrak{a})\;;\;\exists f\in L^2_{d\mu}\ \text{s.t.}\ 
\mathfrak{a}(u,v)=(f,v)_{L^2_{d\mu}}\quad \forall v\in D(\mathfrak{a})
\right\},
\\
-Lu&=f
\end{align*}
is nonnegative and selfadjoint in $L^2_{d\mu}$. Therefore $L$ generates an analytic semigroup $T(t)$ on $L^2_{d\mu}$ and satisfies 
\[
\|T(t)f\|_{L^2_{d\mu}}\leq 
\|f\|_{L^2_{d\mu}}, \quad 
\|LT(t)f\|_{L^2_{d\mu}}
\leq 
\frac{1}{t}\|f\|_{L^2_{d\mu}}
\quad \forall f\in L^2_{d\mu}.
\]
\end{lemma}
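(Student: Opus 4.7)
The plan is to construct $L$ as the Friedrichs extension associated with the closed, symmetric, nonnegative sesquilinear form $\mathfrak{a}$ on $L^2_{d\mu}$, and then to read off the semigroup bounds from the spectral calculus of nonnegative self-adjoint operators. This is the standard route, and the lemma is essentially quoted from \cite{SoWa16_JDE}; the present setting only differs in the admissible range of $\alpha$.

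First I would verify that $\mathfrak{a}$ satisfies the hypotheses of Kato's first representation theorem: it should be densely defined, symmetric, nonnegative, and closed in $L^2_{d\mu}$. Symmetry and nonnegativity are immediate from the integral expression $\mathfrak{a}(u,v)=\int_\Omega \nabla u\cdot\nabla v\,dx$. Density of $D(\mathfrak{a})$ in $L^2_{d\mu}$ reduces to the density of $C_c^\infty(\Omega)$ in the weighted space, which holds because the weight $a$ is locally bounded above and below on $\overline{\Omega}$. Closedness amounts to proving that $D(\mathfrak{a})$ equipped with the form norm $(\mathfrak{a}(u,u)+\|u\|_{L^2_{d\mu}}^2)^{1/2}$ is complete; this was carried out in \cite{SoWa16_JDE} for $\alpha\in[0,1)$, and the same argument --- local $H^1$-regularity combined with the distributional identification built into the definition of $D(\mathfrak{a})$ --- goes through for every $\alpha<2$.

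Kato's theorem then produces a unique nonnegative self-adjoint operator on $L^2_{d\mu}$ whose domain is exactly the set described in the statement and which implements $\mathfrak{a}(u,v)=(\,\cdot\,,v)_{L^2_{d\mu}}$ in the stated way, giving the operator $L$. For the semigroup bounds I would invoke the spectral theorem: since $L$ is nonnegative self-adjoint, its spectrum lies in $[0,\infty)$, so $T(t)$ is defined by functional calculus from the bounded function $\lambda\mapsto e^{-t\lambda}$. Analyticity on the right half-plane is automatic from self-adjointness and semiboundedness. The contraction estimate is then $\|T(t)\|\le \sup_{\lambda\ge 0}e^{-t\lambda}=1$, and the smoothing bound follows from the elementary calculus inequality $\sup_{\lambda\ge 0}\lambda e^{-t\lambda}=(et)^{-1}\le t^{-1}$.

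The main technical obstacle I expect is the closedness of $\mathfrak{a}$. Because $a(x)\asymp \lr{x}^{-\alpha}$ degenerates or blows up at spatial infinity depending on the sign of $\alpha$, a Cauchy sequence in the form norm does not automatically have a distributional gradient consistent with its $L^2_{d\mu}$-limit. The condition in $D(\mathfrak{a})$ that distributional derivatives be tested against every $\varphi\in C_c^\infty(\R^N)$ --- rather than only against $C_c^\infty(\Omega)$ --- is engineered precisely to force this identification, and the argument rests on a weighted Hardy/Poincar\'e-type control of the $L^2_{d\mu}$-norm in terms of the Dirichlet form away from $\pa\Omega$.
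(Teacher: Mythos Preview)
Your proposal is correct and follows the standard Friedrichs-extension argument that underlies the cited reference; note that the present paper does not actually supply a proof of this lemma but simply quotes it from \cite[Lemma 2.2]{SoWa16_JDE}, remarking only that the closedness argument for $\mathfrak{a}$ carries over to the range $\alpha<2$. Your sketch is exactly that route, so there is nothing to compare.
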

Although $L^p$-$L^q$ type 
estimates for the semigroup $T(t)$ 
are proved 
in \cite{SoWa16_JDE} for $\alpha\in (0,1)$ and \cite{SoWa18_ADE} for $\alpha\in (-\infty,0)$, 
we shall provide other type 
decay estimates. 
The main assertion of this section is Proposition \ref{prop:heat-decay} which is rewritten 
in the following way via the notation in this section.  
\begin{proposition}\label{prop.4.2}
Let $\sigma\in (0,\frac{N-\alpha}{2})$. 
If $f\in L^2_{d\mu}$ satisfies $\lr{x}^{\sigma}f\in L^2_{d\mu}$, 
then 
\[
\|T(t)f\|_{L^2_{d\mu}}
\leq C(1+t)^{-\frac{\sigma}{2-\alpha}}\big\|\lr{x}^{\sigma}f\big\|_{L^2_{d\mu}} \quad \forall t\geq 0.
\]
\end{proposition}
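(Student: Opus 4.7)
The plan is to carry out the formal weighted-$L^2$ computation sketched in Remark~1.4 with $\Phi := \overline{U}_{D,\sigma}$, the supersolution from Theorem~\ref{thm:main}. Writing $v := T(t)f$, the formal identity
\[
\frac{d}{dt}\int_\Omega \frac{v^2}{\Phi D}\,dx = -2\int_\Omega \Big|\nabla\Big(\frac{v}{\Phi}\Big)\Big|^2 \Phi\,dx - \int_\Omega \frac{v^2(\pa_t\Phi - D\Delta\Phi)}{\Phi^2 D}\,dx
\]
has both right-hand terms non-positive: the first manifestly, the second because $\Phi$ is a supersolution of $\pa_tU = D\Delta U$. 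Hence $t\mapsto \int_\Omega v(t)^2/(\Phi(t)D)\,dx$ is non-increasing. Combining the lower bound from \eqref{eq:two-sided}, the relation $1/D \sim a$, and the fact that $1/\Phi(x,0) \sim \lr{x}^{2\sigma}$, one estimates
\[
c_{D,\sigma}(1+t)^{\frac{2\sigma}{2-\alpha}}\|v(t)\|_{L^2_{d\mu}}^2 \le \int_\Omega \frac{v(t)^2}{\Phi(t)D}\,dx \le \int_\Omega \frac{f^2}{\Phi(0)D}\,dx \le C\|\lr{x}^{\sigma}f\|_{L^2_{d\mu}}^2,
\]
and taking square roots gives the desired decay.

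The obstruction, noted in Remark~1.4, is that for $f\in L^2_{d\mu}$ the solution $v = T(t)f$ lies only in $D(\mathfrak{a})$, while the calculus above requires $\Delta v$ to make pointwise sense and enough spatial decay to integrate by parts twice. To get around this I would argue by approximation on bounded subdomains combined with the Trotter--Kato theorem. Fix $R$ so large that $\R^N\setminus\Omega\subset B_R$, set $\Omega_R := \Omega\cap B_R$, and let $L_R$ denote the Friedrichs extension of $-a(x)\Delta$ in $L^2_{d\mu}(\Omega_R)$ with Dirichlet boundary conditions on $\pa\Omega_R$, generating a semigroup $T_R(t)$. For $f\in C_c^\infty(\Omega)$ supported in $\Omega_R$, standard parabolic regularity on the compact set $\overline{\Omega_R}$ (on which $a$ is smooth and bounded away from $0$) yields $v_R := T_R(t)f \in C^\infty(\overline{\Omega_R}\times [0,\infty))$ with $v_R=0$ on $\pa\Omega_R$. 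Since $\Phi$ is a pointwise supersolution on all of $\R^N$, its restriction to $\Omega_R$ is a fortiori a supersolution there, and the energy identity above now holds rigorously on $\Omega_R$: all boundary terms arising in the two integrations by parts vanish either because $v_R\equiv 0$ on $\pa\Omega_R$ or because $v_R$ is smooth inside. This gives the estimate
\[
\|T_R(t)f\|_{L^2_{d\mu}(\Omega_R)} \leq C(1+t)^{-\frac{\sigma}{2-\alpha}}\|\lr{x}^{\sigma}f\|_{L^2_{d\mu}}
\]
with $C$ independent of $R$, since the bounds in \eqref{eq:two-sided} are uniform in space.

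The approximating quadratic forms $\mathfrak{a}_R$ associated with $L_R$ (defined on $H^1_0(\Omega_R)$ and extended by zero to $\Omega$) form a monotone non-decreasing net whose closure is $\mathfrak{a}$. The monotone convergence theorem for closed forms, together with Trotter--Kato, therefore gives $T_R(t)f \to T(t)f$ strongly in $L^2_{d\mu}$ as $R\to\infty$ for each fixed $t\geq 0$. Fatou's lemma on the left-hand side of the uniform-in-$R$ estimate extends it to $f\in C_c^\infty(\Omega)$, and the density of $C_c^\infty(\Omega)$ in $\{g\in L^2_{d\mu} : \lr{x}^\sigma g\in L^2_{d\mu}\}$ finishes the proof.

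The hardest part will be the bookkeeping in the approximation step: one must simultaneously control the changing domain, verify that the energy identity remains clean on $\Omega_R$ despite the cut-off, and rigorously invoke the form-convergence result to obtain the required strong convergence of semigroups. Everything else is forced once Theorem~\ref{thm:main} is in hand.
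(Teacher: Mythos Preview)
Your proposal is correct and follows essentially the same route as the paper: restrict to the bounded domains $\Omega_R=\Omega\cap B_R$, carry out the weighted $L^2$ computation with the supersolution $\Phi=\overline{U}_{D,\sigma}$ rigorously there, and then pass to the limit via Trotter--Kato. The only differences are cosmetic---the paper verifies resolvent convergence $(I-L_n)^{-1}f\to(I-L)^{-1}f$ directly through a maximum-principle/monotonicity argument rather than invoking form convergence, and works with general $f\in L^2_{d\mu}$ from the outset instead of reducing first to $f\in C_c^\infty$ and then passing to the closure by density.
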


\begin{proof}
We introduce the following 
auxiliary problem 
in $\Omega_n=\Omega\cap B(0,n)$, 
which is an 
approximation of 
the original problem \eqref{de}:
\begin{equation}%
\label{de-approx}
\begin{cases}
a(x)\pa_tu(x,t)- \Delta u(x,t)=0,
&
(x,t)\in \Omega_n\times (0,\infty),
\\
u(x,t)=0,&
(x,t)\in \pa\Omega_n\times (0,\infty),
\\
u(0,x)=f(x), 
&
x \in \Omega_n.
\end{cases}
\end{equation}%
It is sufficient to consider only the case where $\pa\Omega \cap \pa B(0,n)= \emptyset$. 
By the same procedure as the case of $L$, we have the corresponding 
generator $L_n$ 
and 
the semigroup $T_n(t)$
satisfying 
$D(L_n)=H^2(\Omega_n)\cap H_0^1(\Omega_n)$ with 
\[
\|T_n(t)f\|_{L^2_{d\mu}(\Omega_n)}\leq 
\|f\|_{L^2_{d\mu}(\Omega_n)}, \quad 
\|L_nT_n(t)g\|_{L^2_{d\mu}(\Omega_n)}
\leq 
\frac{1}{t}\|g\|_{L^2_{d\mu}(\Omega_n)}
\quad \forall g\in L^2_{d\mu}(\Omega_n).
\]
Now we define the semigroup $\{\widetilde{T}_n(t)\}_{t\geq 0}$ 
in $L^2_{d\mu}$ by
\[
\widetilde{T}_n(t)f=
\begin{cases}
w_n(t)=T_n(t)[f|_{\Omega_n}](x) &\quad x\in \Omega_n, 
\\
f(x)&\quad x\in \Omega\setminus \Omega_n.
\end{cases}
\]
Here we choose $\beta=\frac{2\sigma}{2-\alpha}$.
Since 
$\Phi_{\beta,\ep}=\Phi_{\beta,\ep}(\cdot,t;1)\in C^{2}(\overline{\Omega_n})$ is positive, 
we can verify the following computation:
\begin{align*}
\frac{d}{dt}
\int_{\Omega_n} |w_n(t)|^2\Phi_{\beta,\ep}(t;1)^{-1}\,d\mu 
&=
2\int_{\Omega_n} w_n(t)\pa_tw_n(t)\Phi_{\beta,\ep}^{-1}\,d\mu 
-
\int_{\Omega_n} |w_n(t)|^2\Phi_{\beta,\ep}^{-2}\pa_t\Phi_{\beta,\ep}\,d\mu 
\\
&=
2\int_{\Omega_n} \widetilde{w_n}(t)\Delta (\widetilde{w_n}\Phi_{\beta,\ep})\,dx
-
\int_{\Omega_n} |\widetilde{w_n}(t)|^2a\pa_t\Phi_{\beta,\ep}\,dx
\\
&=
-2\int_{\Omega_n} |\nabla \widetilde{w_n}(t)|^2\Phi_{\beta,\ep}\,dx
-
\int_{\Omega_n} |\widetilde{w_n}(t)|^2(a\pa_t\Phi_{\beta,\ep}-\Delta \Phi_{\beta,\ep})\,dx, 
\end{align*}
where we have put $\widetilde{w_n}(t)=w_n(t)\Phi_{\beta,\ep}^{-1}$. 
By using the property of $\Phi_{\beta,\ep}$ as a supersolution in Lemma \ref{lem:super-sol}, we deduce
\[
\int_{\Omega_n} |w_n(t)|^2\Phi_{\beta,\ep}(\cdot,t,1)^{-1}\,d\mu 
\leq 
\int_{\Omega_n} |f|^2\Phi_{\beta,\ep}(\cdot,0,1)^{-1}\,d\mu 
\]
and therefore we have
\begin{align}\label{eq:pre-west}
(1+t)^\beta
\int_{\Omega_n} |w_n(t)|^2\,d\mu 
\leq 
C\int_{\Omega_n} |f|^2\lr{x}^{(2-\alpha)\beta}\,d\mu 
\leq 
C\int_{\Omega} |f|^2\lr{x}^{(2-\alpha)\beta}\,d\mu. 
\end{align}
Then we prove $T_n(t)f\to T(t)f$ in $L^2_{d\mu}$ 
by employing Trotter--Kato convergence theorem 
(see Engel--Nagel \cite[Theorem 4.8]{EN00}).
For $f\in L^2_{d\mu}$, set $u_n=(1-L_n)^{-1}f$. 
Since the semigroups $\{T_n(t)\}_{t\geq 0}$ are positive, 
we may assume $f\geq 0$ and $u_n\geq 0$ without loss of generality. 
In view of maximum principle, the restriction $\chi_nu_n$
is nondecreasing with respect to $n$, 
where $\chi_n$ is the indicator function on $\Omega_n$. 
Moreover, observing that
\begin{align}\label{eq:weak-ell}
\int_{\Omega_n}
u_n \varphi
\,d\mu 
+
\int_{\Omega_n}
\nabla u_n \cdot\nabla \varphi
\,dx
=
\int_{\Omega_n}
f \varphi
\,d\mu, \quad \varphi\in H^1_0(\Omega_n) 
\end{align}
we see from the choice $\varphi=u_n$ that $\|u_n\|_{L^2_{d\mu}(\Omega_n)}\leq \|f\|_{L^2_{d\mu}}$ 
and $\|\nabla u_n\|_{L^2(\Omega_n)}\leq \|f\|_{L^2_{d\mu}}$. 
This yields that 
there exists $u\in L^2_{d\mu}\cap \dot{H}^1(\Omega)$ 
such that 
$\chi_{n}u_{n}\to u$ in $L^2_{d\mu}$ with $\|\nabla u\|_{L^2(\Omega)}\leq \|f\|_{L^2_{d\mu}}$. 
Since $u$ satisfies the Dirichlet boundary condition on $\pa\Omega$, $u$ 
belongs to $D(\mathfrak{a})$. Letting $n\to \infty$ in \eqref{eq:weak-ell}, 
we have
\[
\int_{\Omega}
u \varphi
\,d\mu 
+
\int_{\Omega}
\nabla u \cdot\nabla \varphi
\,dx
=
\int_{\Omega}
f \varphi
\,d\mu, \quad \varphi\in C_c^\infty(\Omega). 
\] 
Therefore $u\in D(L)$ and $u-Lu=f$. This gives $(I-L_{n})^{-1}f\to (I-L)^{-1}f$ in $L^2_{d\mu}$ as $n\to \infty$. 
The Trotter--Kato theorem implies $T_n(t)f\to T(t)f$ in $L^2_{d\mu}$ as $n\to \infty$. 
Consequently, \eqref{eq:pre-west} implies the desired estimate. 
The proof is complete. 
\end{proof}

\section{Application to weighted energy estimates for damped wave equations}\label{sec:damped}
In this section,
we discuss the asymptotic behavior of the solution to
the initial-boundary value problem of the damped wave equation
\begin{equation}%
\label{dw}
\begin{cases}
\pa_t^2u(x,t) - \Delta u(x,t) + a(x)\pa_tu(x,t)=0,
&
(x,t)\in \Omega\times (0,\infty),
\\
u(x,t)=0,&
(x,t)\in \pa\Omega\times (0,\infty),
\\
u(0,x)=u_0(x), 
\quad 
\pa_t u(0,x)=u_1(x), 
&
x \in \Omega.
\end{cases}
\end{equation}%
Here,
$\Omega$
is an exterior domain in
$\mathbb{R}^N$ with $N\ge 2$,
namely,
$\mathbb{R}^N \setminus \Omega$
is compact.
We assume that the boundary
$\partial\Omega$
is smooth.
We can also treat the case
$\Omega = \mathbb{R}^N$
with $N \ge 1$.
In that case, we omit the boundary condition from \eqref{dw}.

We assume that the coefficient of the damping term
$a(x)$
is a smooth positive function defined on
$\mathbb{R}^n$
and satisfying
\begin{align}%
\label{ass_a}
	\lim_{|x|\to \infty} (|x|^{\alpha} a(x)) = a_0
\end{align}%
with some
$\alpha \in [0,1)$
and
$a_0 > 0$.
The initial data satisfy
$(u_0, u_1) \in (H^2(\Omega)\cap H^1_0(\Omega))\times H^1_0(\Omega)$.

It is known that \eqref{dw} has a unique solution
\begin{align}%
\label{sol.class}
	u \in C^2([0,\infty); L^2(\Omega)) \cap C^1([0,\infty); H^1_0(\Omega)) \cap C([0,\infty); H^2(\Omega))
\end{align}%
(see \cite[Theorem 2]{Ikawa}).

In view of the validity of weighted Hardy inequality 
\[
\int_{\Omega}\lr{x}^{2(\sigma-1)}|u(x)|^2
\leq 
C
\int_{\Omega}\lr{x}^{2\sigma}|\nabla u(x)|^2
\,dx
\]
which crucially affects to  the validity of Lemma \ref{lem.hardy}, 
we will split the proofs of weighted energy estimates 
for multi-dimensional case and one-dimensional case.

\subsection{Weighted energy estimates for $N\ge 2$}
Let
$t_0 \ge 1$
be sufficiently large determined later and let
\begin{align}
\label{psi}
	\Psi(x,t; t_0) :=
		t_0 + t + A_{\varepsilon}(x),
\end{align}
where the function
$A_{\varepsilon}(x)$ is given in Lemma \ref{lem_A_ep}.
We first show the relation of $\Phi_{\beta,\ep}(x,t;t_0)$ and $\Psi(x,t;t_0)$.
\begin{lemma}\label{lem.phi.psi}
Let $\Phi_{\beta,\ep}(x,t;t_0)$
and
$\Psi(x,t;t_0)$ be defined in Definition \ref{phi.beta.ep} and \eqref{psi}, respectively.
Then the followings are hold.
\begin{itemize}
\item[(i)]
If $\beta \ge 0$, then
there exists a constant $C_{\alpha,\beta,\varepsilon} > 0$ such that
\begin{align*}
	|\Phi_{\beta,\varepsilon}(x,t;t_0) | \le C_{\alpha,\beta,\varepsilon} \Psi (x,t; t_0)^{-\beta}
\end{align*}
for any $(x,t) \in \mathbb{R}^N \times [0,\infty)$.
\item[(ii)]
If $\beta \in (0, \gamma_{\varepsilon})$,
then there exists a constant $c_{\alpha,\beta,\varepsilon} > 0$ such that
\begin{align*}
	|\Phi_{\beta,\varepsilon}(x,t;t_0) | \ge c_{\alpha,\beta,\varepsilon} \Psi (x,t; t_0)^{-\beta}
\end{align*}
for any $(x,t) \in \mathbb{R}^N \times [0,\infty)$.
\end{itemize}
\end{lemma}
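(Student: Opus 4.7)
The plan is to reduce both assertions to the uniform bounds on $\varphi_{\beta,\ep}$ already established in Lemma \ref{lem_varphi}, combined with an elementary comparison between the quantity $(t_0+t)+\widetilde{\gamma}_\ep A_\ep(x)$ and $\Psi(x,t;t_0)=t_0+t+A_\ep(x)$.

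First I would unfold the definition in Definition \ref{phi.beta.ep} to get
\[
|\Phi_{\beta,\ep}(x,t;t_0)| = (t_0+t)^{-\beta}\,|\varphi_{\beta,\ep}(z)|,\qquad z=\frac{\widetilde{\gamma}_\ep A_\ep(x)}{t_0+t},
\]
so that $(t_0+t)(1+z)=(t_0+t)+\widetilde{\gamma}_\ep A_\ep(x)$. Since $\widetilde{\gamma}_\ep$ is a positive constant depending only on $\alpha$ and $\ep$, one has the trivial two-sided comparison
\[
\min(1,\widetilde{\gamma}_\ep)\,\Psi(x,t;t_0) \;\le\; (t_0+t)+\widetilde{\gamma}_\ep A_\ep(x) \;\le\; \max(1,\widetilde{\gamma}_\ep)\,\Psi(x,t;t_0),
\]
uniformly in $(x,t)\in\R^N\times[0,\infty)$. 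This will translate bounds on $(1+z)^{-\beta}(t_0+t)^{-\beta}$ into bounds on $\Psi^{-\beta}$ up to constants depending only on $\alpha,\beta,\ep$.

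For part (i), I would apply Lemma \ref{lem_varphi} (ii), valid for all $\beta\ge 0$, to obtain $|\varphi_{\beta,\ep}(z)|\le K_{\beta,\ep}(1+z)^{-\beta}$. Multiplying by $(t_0+t)^{-\beta}$ and invoking the lower half of the comparison above gives the desired upper bound with $C_{\alpha,\beta,\ep}=K_{\beta,\ep}\min(1,\widetilde{\gamma}_\ep)^{-\beta}$. For part (ii), the restriction $\beta\in(0,\gamma_\ep)$ (which is slightly sharper than the standing assumption $\beta<\widetilde{\gamma}_\ep$ from Definition \ref{phi.beta.ep}, since $\gamma_\ep=(1-\ep)\widetilde{\gamma}_\ep$) is exactly what is needed to apply Lemma \ref{lem_varphi} (i), yielding the matching lower bound $\varphi_{\beta,\ep}(z)\ge k_{\beta,\ep}(1+z)^{-\beta}$; combining with the upper half of the comparison gives the lower estimate with $c_{\alpha,\beta,\ep}=k_{\beta,\ep}\max(1,\widetilde{\gamma}_\ep)^{-\beta}$. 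Positivity of $\varphi_{\beta,\ep}$ in this range also makes the absolute value in the statement of (ii) superfluous.

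There is no real obstacle here; the argument is a direct substitution. The only point worth flagging is the interplay between the two thresholds $\gamma_\ep<\widetilde{\gamma}_\ep$: the upper estimate (i) rests on the weaker condition $\beta\ge 0$ because Lemma \ref{lem_Kummer} (iii) controls $M(\gamma_\ep-\beta,\gamma_\ep;\cdot)$ regardless of its sign, whereas the lower estimate (ii) requires $\gamma_\ep-\beta>0$ in order to invoke the positivity and asymptotics from Lemma \ref{lem_Kummer} (ii). Accordingly I would state the two parts separately and not try to unify their hypotheses.
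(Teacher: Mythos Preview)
Your argument is correct and matches the paper's approach exactly: the paper simply states that the lemma ``directly follows from Lemma \ref{lem_varphi}'' and omits the details, which are precisely the substitution and the elementary comparison $(t_0+t)(1+z)=(t_0+t)+\widetilde{\gamma}_\ep A_\ep(x)\sim \Psi(x,t;t_0)$ that you have spelled out. Your remark on the interplay between $\gamma_\ep$ and $\widetilde{\gamma}_\ep$ is also accurate and consistent with how the paper separates the hypotheses for (i) and (ii).
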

This lemma directly follows from Lemma \ref{lem_varphi} and we omit the detail.

Next, we prepare a Hardy-type inequality with the weight function $\Psi$.
\begin{lemma}[Hardy-type inequality]\label{lem.hardy}
For every $w \in H^1_0(\Omega)$
having a compact support on $\mathbb{R}^N$
and
$\lambda > - \frac{N-2 + 2\varepsilon(N-\alpha)}{2-\alpha + \varepsilon(N-\alpha)}$,
there exists a positive constant
$C = C_{N,\alpha,\varepsilon,\lambda}$
such that
\begin{align*}
	\int_{\Omega}|w|^2 a(x) \Psi^{\lambda-1} \,dx
		&\le C \int_{\Omega} |\nabla w|^2 \Psi^{\lambda}\,dx.
\end{align*}
\end{lemma}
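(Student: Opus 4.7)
My plan is a standard weighted Hardy-type argument built on the approximate Poisson potential $A_\ep$ from Lemma \ref{lem_A_ep}. Three properties of $A_\ep$ drive the proof: $\Delta A_\ep \ge (1-\ep) a$ from \eqref{A1}; $A_\ep \le \Psi$ (since $\Psi = t_0+t+A_\ep \ge A_\ep$) from \eqref{A2}; and the Kato-type bound $|\nabla A_\ep|^2 \le \bigl(\tfrac{2-\alpha}{N-\alpha}+\ep\bigr) a\, A_\ep$ from \eqref{A3}. The identity $\nabla\Psi = \nabla A_\ep$ is the bridge that lets $\Psi$ carry the weights while the differential inequalities are taken on $A_\ep$.

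First I would replace $a$ on the left-hand side by $\Delta A_\ep$ via $\Delta A_\ep\ge(1-\ep)a$ and integrate by parts against $|w|^2\Psi^{\lambda-1}$. By the compact support of $w$ together with $w\in H^1_0(\Omega)$ (with a brief approximation by $C_c^\infty(\Omega)$ if necessary), no boundary contribution on $\pa\Omega$ and no term at infinity survives, and using $\nabla\Psi=\nabla A_\ep$ one obtains
\begin{equation*}
\int_\Omega |w|^2(\Delta A_\ep)\Psi^{\lambda-1}\,dx
= -2\int_\Omega w\,\nabla w\cdot\nabla A_\ep\,\Psi^{\lambda-1}\,dx
- (\lambda-1)\int_\Omega |w|^2|\nabla A_\ep|^2\Psi^{\lambda-2}\,dx.
\end{equation*}

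Next I would estimate the cross term by Cauchy--Schwarz with a parameter $\delta>0$,
\begin{equation*}
2\left|\int_\Omega w\,\nabla w\cdot\nabla A_\ep\,\Psi^{\lambda-1}\,dx\right|
\le \delta\int_\Omega |w|^2|\nabla A_\ep|^2\Psi^{\lambda-2}\,dx
+ \frac{1}{\delta}\int_\Omega |\nabla w|^2\Psi^{\lambda}\,dx,
\end{equation*}
and convert $|\nabla A_\ep|^2\Psi^{\lambda-2}$ into $a\,\Psi^{\lambda-1}$ using \eqref{A3} together with $A_\ep \le \Psi$, so that
\begin{equation*}
|\nabla A_\ep|^2\Psi^{\lambda-2}
\le \Bigl(\tfrac{2-\alpha}{N-\alpha}+\ep\Bigr) a\,\Psi^{\lambda-1}.
\end{equation*}
When $\lambda\ge 1$ the term $-(\lambda-1)\int|w|^2|\nabla A_\ep|^2\Psi^{\lambda-2}\,dx$ has the favorable sign and is discarded; when $\lambda<1$ it is retained and dominated by the same Kato-type reduction. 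Absorbing the resulting $\int|w|^2 a\Psi^{\lambda-1}\,dx$ on the left leaves
\begin{equation*}
\Bigl[(1-\ep) - (\delta+(1-\lambda)_+)\Bigl(\tfrac{2-\alpha}{N-\alpha}+\ep\Bigr)\Bigr]\int_\Omega |w|^2 a\,\Psi^{\lambda-1}\,dx
\le \frac{1}{\delta}\int_\Omega |\nabla w|^2\Psi^{\lambda}\,dx,
\end{equation*}
and the coefficient on the left is strictly positive for some $\delta>0$ precisely under the stated lower bound on $\lambda$.

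The only delicate step is the sign analysis of $(\lambda-1)$: for $\lambda<1$ the term produced by differentiating the weight $\Psi^{\lambda-1}$ is non-coercive, and the sharp Kato-type inequality \eqref{A3} is exactly what is needed to re-absorb it on the left and obtain the critical $(N-2)/(2-\alpha)$-type threshold in the limit $\ep\to 0$. The rest—boundary terms on $\pa\Omega$ and the absence of terms at infinity—is routine thanks to $w \in H^1_0(\Omega)$ and the compact support hypothesis.
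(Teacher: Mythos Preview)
Your proposal is correct and is essentially the same argument as the paper's: both rest on integrating by parts against $\nabla\Psi=\nabla A_\ep$, the lower bound $\Delta A_\ep\ge(1-\ep)a$ from \eqref{A1}, the Kato-type control $|\nabla A_\ep|^2\le\bigl(\tfrac{2-\alpha}{N-\alpha}+\ep\bigr)aA_\ep$ from \eqref{A3}, and Cauchy--Schwarz. The only cosmetic difference is that the paper first obtains a pointwise lower bound for ${\rm div}\,(\Psi^{\lambda-1}\nabla\Psi)$ and then applies Schwarz without a parameter, whereas you integrate by parts first and use a parametrized Young inequality; the resulting threshold on $\lambda$ is identical.
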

\begin{remark}
(i) The constant $C_{N,\alpha,\varepsilon,\lambda}$
in the above lemma is explicitly given by
\begin{align*}
	C_{N,\alpha,\varepsilon,\lambda}=
		4 \left(\frac{2-\alpha}{N-\alpha}+\varepsilon\right)
		\min\left\{ 1-\varepsilon, 1-\varepsilon
		+ (\lambda-1)\left( \frac{2-\alpha}{N-\alpha} + \varepsilon \right) \right\}^{-2}.
\end{align*}
(ii)
Lemma \ref{lem.hardy} holds even when $\Omega = \mathbb{R}^N$ with $N=1$.
However, due to the restriction on
$\lambda$,
we cannot apply it to weighted energy estimates.
This is the difference with one-dimensional case.
\end{remark}
\begin{proof}[Proof of Lemma \ref{lem.hardy}]
The proof is similar to that of \cite[Lemma 3.5]{SoWa19_CCM}.
First, noting $\nabla \Psi = \nabla A_\ep$ and $\Delta \Psi = \Delta A_\ep$,
and using Lemma \ref{lem_A_ep},
we calculate
\begin{align*}
	{\rm div\,}\left( \Psi^{\lambda-1} \nabla \Psi \right)
	&= (\Delta \Psi) \Psi^{\lambda-1} + (\lambda-1) |\nabla \Psi|^2 \Psi^{\lambda-2} \\
	&\ge (1-\varepsilon) a(x) \Psi^{\lambda-1}
		+ (\lambda-1) |\nabla A_\ep(x)|^2 \Psi^{\lambda-2} \\
	&= \left[ (1-\varepsilon) (t_0 +t + A_{\varepsilon}(x) )
		+ (\lambda-1) \frac{|\nabla A_\ep(x)|^2}{a(x)} \right] a(x) \Psi^{\lambda-2} \\
	&= \left[ (1-\varepsilon)(t_0 +t)
		+ \left( 1-\varepsilon + (\lambda-1)\frac{|\nabla A_\ep(x)|^2}{a(x)A_\ep(x)} \right)
			A_\ep(x) \right]
			a(x) \Psi^{\lambda-2} \\
	&\ge
		\min\left\{ 1-\varepsilon, 1-\varepsilon
				+ (\lambda-1)\left( \frac{2-\alpha}{N-\alpha} + \varepsilon \right) \right\}
				a(x) \Psi^{\lambda-1}.
\end{align*}
Since $\varepsilon \in (0,1)$ and
$\lambda > - \frac{N-2 + 2\varepsilon(N-\alpha)}{2-\alpha + \varepsilon(N-\alpha)}$,
all members in the minimum are positive.
On the one hand, the integration by parts and the Schwarz inequality lead to
\begin{align*}
	\int_{\Omega} |w|^2 {\rm div\,} \left( \Psi^{\lambda-1} \nabla \Psi \right) \,dx
	&= - 2 \int_{\Omega} w (\nabla w \cdot \nabla \Psi ) \Psi^{\lambda-1} \,dx \\
	&\le 2 \left( \int_{\Omega} |w|^2 a(x) \Psi^{\lambda-1} \,dx \right)^{1/2}
		\left( \int_{\Omega} |\nabla w|^2 \frac{|\nabla \Psi|^2}{a(x)} \Psi^{\lambda-1} \,dx \right)^{1/2} \\
	&\le 2 \left( \int_{\Omega} |w|^2 a(x) \Psi^{\lambda-1} \,dx \right)^{1/2}
		\left( \left(\frac{2-\alpha}{N-\alpha} + \varepsilon \right)
			\int_{\Omega} |\nabla w|^2 \Psi^{\lambda} \,dx \right)^{1/2}.
\end{align*}
Here we have used that
\[
	\frac{|\nabla \Psi|^2}{a(x)} = \frac{|\nabla A(x)|^2}{a(x)A(x)} A(x)
		\le \left(\frac{2-\alpha}{N-\alpha} + \varepsilon \right) \Psi(x,t;t_0)
\]
holds by Lemma \ref{lem_A_ep}.
Putting all the estimates together, we obtain the desired assertion.
\end{proof}

\begin{lemma}[\cite{So18a} Lemma 2.5]\label{lem.deltaphi}
For $\Phi \in C^2(\overline{\Omega})$, $u\in H^2(\Omega) \cap H^1_0(\Omega)$ and $\delta \in (0, 1/2)$,
we have
\begin{align*}
	\int_{\Omega} u \Delta u \Phi^{-1+2\delta}\,dx
		&\le - \frac{\delta}{1-\delta} \int_{\Omega} |\nabla u|^2 \Phi^{-1+2\delta}\,dx
			+ \frac{1-2\delta}{2} \int_{\Omega} u^2 (\Delta \Phi) \Phi^{-2+2\delta} \,dx.
\end{align*}
\end{lemma}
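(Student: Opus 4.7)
The plan is to integrate by parts twice, then tighten the resulting identity with a Cauchy--Schwarz / weighted Young argument so as to eliminate an unwanted $|\nabla \Phi|^2$ term. Since $u\in H^1_0(\Omega)$, a single integration by parts (the boundary term vanishes) would give
\begin{align*}
\int_{\Omega} u \Delta u \, \Phi^{-1+2\delta}\,dx
= -\int_{\Omega} |\nabla u|^2 \Phi^{-1+2\delta}\,dx
+ (1-2\delta)\int_{\Omega} u\, \nabla u\cdot \nabla\Phi\, \Phi^{-2+2\delta}\,dx.
\end{align*}
Writing $u\nabla u = \tfrac{1}{2}\nabla(u^2)$ and integrating by parts once more (again boundary terms vanish by the Dirichlet condition) I would obtain the key identity
\begin{align*}
\int_{\Omega} u\,\nabla u \cdot \nabla\Phi\, \Phi^{-2+2\delta}\,dx
= -\tfrac{1}{2}\int_{\Omega} u^2 (\Delta \Phi)\, \Phi^{-2+2\delta}\,dx
+ (1-\delta)\int_{\Omega} u^2 |\nabla \Phi|^2\, \Phi^{-3+2\delta}\,dx.
\end{align*}

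With the shorthand $A := \int_\Omega |\nabla u|^2 \Phi^{-1+2\delta}\,dx$, $B := \int_\Omega u^2 |\nabla \Phi|^2 \Phi^{-3+2\delta}\,dx$, and $C := \int_\Omega u^2 (\Delta \Phi)\, \Phi^{-2+2\delta}\,dx$, combining the two steps produces the exact formula
\begin{align*}
\int_{\Omega} u\Delta u \, \Phi^{-1+2\delta}\,dx
= -A - \tfrac{1-2\delta}{2}\,C + (1-2\delta)(1-\delta)\, B.
\end{align*}
The positive term $(1-2\delta)(1-\delta)B$ does not appear on the right of the target inequality and so must be absorbed. The key step is a direct Cauchy--Schwarz estimate on the cross term, $\int_\Omega u\nabla u\cdot\nabla\Phi\, \Phi^{-2+2\delta}\,dx \le \sqrt{AB}$, which when combined with the identity from the second integration by parts yields $(1-\delta) B - C/2 \le \sqrt{AB}$. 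A weighted Young inequality $\sqrt{AB} \le \tfrac{A}{2(1-\delta)} + \tfrac{(1-\delta)B}{2}$ then allows absorption of $B$ on the right, giving the clean bound $B \le \tfrac{A}{(1-\delta)^2} + \tfrac{C}{1-\delta}$.

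Inserting this bound for $B$ into the exact identity, the coefficient of $A$ collapses to $-1 + \tfrac{1-2\delta}{1-\delta} = -\tfrac{\delta}{1-\delta}$ and the coefficient of $C$ collapses to $-\tfrac{1-2\delta}{2} + (1-2\delta) = \tfrac{1-2\delta}{2}$, which is exactly the stated inequality. The main point to watch will be the precise choice of the Young parameter $\lambda=1-\delta$: a different value would produce a suboptimal constant on the $|\nabla u|^2$ term, and only this specific choice reproduces the coefficient $-\tfrac{\delta}{1-\delta}$ claimed in the lemma. The restriction $\delta\in(0,1/2)$ is used both to keep $(1-2\delta)>0$ in the direction of the Cauchy--Schwarz step and to ensure the denominator $1-\delta$ in the Young step stays bounded away from zero.
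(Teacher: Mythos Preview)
Your argument is correct: the two integrations by parts yield the exact identity
\[
\int_\Omega u\Delta u\,\Phi^{-1+2\delta}\,dx=-A-\tfrac{1-2\delta}{2}C+(1-2\delta)(1-\delta)B,
\]
and your Cauchy--Schwarz bound on the cross term combined with the Young inequality with parameter $1-\delta$ gives $B\le \tfrac{A}{(1-\delta)^2}+\tfrac{C}{1-\delta}$, which after substitution collapses to the stated coefficients $-\tfrac{\delta}{1-\delta}$ and $\tfrac{1-2\delta}{2}$.

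The paper itself omits the proof, referring to \cite{So18a} and saying only that it is ``done by integration by parts''; your derivation is the standard way to carry this out, so there is nothing further to compare. One small point worth making explicit is that $\Phi>0$ is tacitly assumed (so that the powers $\Phi^{-1+2\delta}$, $\Phi^{-3+2\delta}$ make sense and $B\ge 0$); in the paper's application $\Phi=\Phi_{\beta,\varepsilon}$ is positive, so this is harmless.
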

The proof in done by integration by parts and
can be found in \cite[Lemma 2.5]{So18a} and we omit the detail.

\begin{definition}[Weighted energy]\label{def.en}
Let
$\delta \in (0,1/2)$,
$\varepsilon \in (0,1)$,
$\lambda \in (0, (1-2\delta)\gamma_{\varepsilon})$
and
$\beta = \lambda/(1-2\delta)$.
Let
$t_0 \ge 1$
and
$\nu > 0$
be sufficiently large and small, respectively, and determined later.
We define the weighted energy
\begin{align}
\label{e1}
	E_{1}(t; t_0)
		&:= \int_{\Omega}
			\left( |\nabla u(x,t)|^2 + |\partial_t u(x,t)|^2 \right)
			\Psi(x,t;t_0)^{\lambda+\frac{\alpha}{2-\alpha}} \,dx,\\
\label{e0}
	E_{0}(t;t_0)
		&:= \int_{\Omega} \left( 2u(x,t) \partial_t u(x,t) + a(x) |u(x,t)|^2 \right)
			\Phi_{\beta,\varepsilon}(x,t;t_0)^{-1+2\delta} \,dx,\\
\label{enu}
	E(t;t_0,\nu)
		&:= E_{1}(t; t_0) + \nu E_0(t;t_0),\\
\label{e1til}
	\widetilde{E}_{1}(t; t_0)
		&:= (t_0+t) \int_{\Omega}
			\left( |\nabla u(x,t)|^2 + |\partial_t u(x,t)|^2 \right)
			\Psi(x,t;t_0)^{\lambda} \,dx
\end{align}
for $t \ge 0$.
\end{definition}
Our strategy of the weighted energy estimates is the following:
First, combining the estimates for
$E_{1}(t; t_0)$ and $E_{0}(t;t_0)$, we give an energy estimate for $E(t;t_0,\nu)$.
Then, using it, we derive the boundedness of
$\widetilde{E}_{1}(t; t_0)$,
which gives a sharper decay estimate for $(\nabla u, \partial_t u)$.
The main result of this subsection is the following:
\begin{theorem}\label{thm.e1}
Assume \eqref{ass_a}. Then 
there exist
$t_{*}\ge 1$ and $\nu > 0$
such that
for any $t_0 \ge t_{*}$
the following holds:
suppose that the initial data satisfy
\begin{align*}
	I_0 := \int_{\Omega} (|\nabla u_0(x)|^2 + |u_1(x)|^2) \Psi(x,0;t_0)^{\lambda+\frac{\alpha}{2-\alpha}} \,dx
		+ \int_{\Omega} a(x) |u_0(x)|^2  \Psi(x,0;t_0)^{\lambda} \,dx
		< \infty.
\end{align*}
Let
$u$
be the solution of \eqref{dw} in the class \eqref{sol.class}.
Then, we have
\begin{align*}
	&E_1(t;t_0) + \widetilde{E}_{1}(t; t_0) + \int_{\Omega} a(x) |u(x,t)|^2 \Psi(x,t;t_0)^{\lambda} \,dx \\
	&\quad + \int_0^t \int_{\Omega} |\nabla u(x,\tau)|^2 \Psi(x,\tau;t_0)^{\lambda} \,dx d\tau \\
	&\quad
		+ \int_0^t
			\int_{\Omega} a(x) |\partial_t u(x,\tau)|^2 \Psi(x,\tau;t_0)^{\lambda}
			\left[ (t_0+ \tau) + \Psi(x,\tau;t_0)^{\frac{\alpha}{2-\alpha}} \right]
			\,dx d\tau \\
	&\le C I_0
\end{align*}
for
$t \ge 0$
with some constant
$C = C(N,\alpha,\delta,\varepsilon,\lambda,t_0,\nu)>0$.
\end{theorem}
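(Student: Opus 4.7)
The plan is to differentiate $E(t;t_0,\nu)=E_1+\nu E_0$ in time, use the wave equation together with integration by parts to produce a negative dissipation plus lower-order error terms, absorb the errors via Hardy's inequality (Lemma \ref{lem.hardy}), the supersolution property of $\Phi_{\beta,\varepsilon}$ (Lemma \ref{lem:super-sol}), and the comparison $\Phi_{\beta,\varepsilon}\sim\Psi^{-\beta}$ (Lemma \ref{lem.phi.psi}), and then bootstrap to obtain the $\widetilde{E}_1$ bound.

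First I would compute $\frac{d}{dt}E_1$. Writing $\mu=\lambda+\alpha/(2-\alpha)$ and noting $\partial_t\Psi=1$, integration by parts (the boundary terms vanish thanks to $u|_{\partial\Omega}=0$) together with the equation $\partial_t^2u=\Delta u-a(x)\partial_tu$ yields
\begin{equation*}
\tfrac{d}{dt}E_1=-2\!\int_\Omega a|\partial_tu|^2\Psi^\mu\,dx+\mu\!\int_\Omega(|\nabla u|^2+|\partial_tu|^2)\Psi^{\mu-1}\,dx-2\mu\!\int_\Omega\Psi^{\mu-1}(\nabla u\cdot\nabla\Psi)\partial_tu\,dx.
\end{equation*}
The last (cross) term is controlled by Cauchy--Schwarz using $|\nabla\Psi|^2/a\leq C\Psi$ from Lemma \ref{lem_A_ep}, putting half the mass into $a|\partial_tu|^2\Psi^\mu$ and leaving an $|\nabla u|^2\Psi^{\mu-1}$ remainder. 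Since $\mu-1-\lambda=-2(1-\alpha)/(2-\alpha)<0$, one has $\Psi^{\mu-1}\leq(1+t_0)^{\mu-1-\lambda}\Psi^\lambda$, so for $t_0$ large both error integrands are small compared to $|\nabla u|^2\Psi^\lambda$ and $a|\partial_tu|^2\Psi^\mu$ respectively (using $a\Psi^{\alpha/(2-\alpha)}\gtrsim1$, which follows from $\Psi\gtrsim\langle x\rangle^{2-\alpha}$ and \eqref{ass_a}).

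Next, I would compute $\frac{d}{dt}E_0$. Expanding and applying the equation converts $\partial_t^2u+a\partial_tu$ into $\Delta u$; Lemma \ref{lem.deltaphi} then produces the crucial term $-\tfrac{2\delta}{1-\delta}\int|\nabla u|^2\Phi_{\beta,\varepsilon}^{-1+2\delta}\,dx$, which by Lemma \ref{lem.phi.psi} is comparable to $-c\int|\nabla u|^2\Psi^\lambda\,dx$. The combination $(1-2\delta)\int u^2(\Delta\Phi_{\beta,\varepsilon}-a\partial_t\Phi_{\beta,\varepsilon})\Phi_{\beta,\varepsilon}^{-2+2\delta}\,dx$ is nonpositive thanks to the supersolution inequality from Lemma \ref{lem:super-sol}, and is discarded. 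What remains is $2\int|\partial_tu|^2\Phi_{\beta,\varepsilon}^{-1+2\delta}\,dx$ together with a cross term $\int u\,\partial_tu\,\Phi_{\beta,\varepsilon}^{-2+2\delta}\partial_t\Phi_{\beta,\varepsilon}\,dx$; using Lemmas \ref{lem.dtphi}, \ref{lem.phi.psi}, Cauchy--Schwarz, and the Hardy-type inequality Lemma \ref{lem.hardy} (with $\lambda-1$ in place of $\lambda$), the cross term is bounded by $\eta\int|\nabla u|^2\Psi^\lambda\,dx+\eta^{-1}\int a|\partial_tu|^2\Psi^\mu\,dx$ for any $\eta>0$. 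The free $|\partial_tu|^2$ term is dominated by $\int a|\partial_tu|^2\Psi^\mu\,dx$ by the same pointwise bound $a\Psi^{\alpha/(2-\alpha)}\gtrsim1$.

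Choosing first $\nu>0$ small so that the $\nu$-multiplied $|\partial_tu|^2$ contributions from $E_0'$ are absorbed into the $-2\int a|\partial_tu|^2\Psi^\mu\,dx$ coming from $E_1'$, and then $t_0\geq t_\ast$ large so that the remaining $\Psi^{\mu-1}$-type terms from $E_1'$ are absorbed into the $\nu$-multiplied $\int|\nabla u|^2\Psi^\lambda$ dissipation produced by $E_0'$, I obtain
\begin{equation*}
\tfrac{d}{dt}E(t;t_0,\nu)\leq -c\!\int_\Omega\!\Bigl(|\nabla u|^2\Psi^\lambda+a|\partial_tu|^2\Psi^\mu\Bigr)\,dx.
\end{equation*}
A Cauchy--Schwarz comparison of the $2u\partial_tu$ term in $E_0$ against $au^2$ and $|\partial_tu|^2$ (using $\Phi^{-1+2\delta}/a\leq C\Psi^\mu$) gives $E(t;t_0,\nu)\gtrsim E_1(t;t_0)+(\nu/4)\int au^2\Psi^\lambda\,dx$ for $\nu$ small, and $E(0;t_0,\nu)\leq CI_0$. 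Integrating in time yields the pointwise bound on $E_1$ and $\int au^2\Psi^\lambda\,dx$ together with the two space-time dissipation integrals; using $a\Psi^{\alpha/(2-\alpha)}\gtrsim1$ converts $\int\!\!\int a|\partial_tu|^2\Psi^\mu$ into a bound on $\int_0^\infty G(\tau)\,d\tau$ where $G(t)=\int(|\nabla u|^2+|\partial_tu|^2)\Psi^\lambda\,dx$. Finally, for $\widetilde{E}_1=(t_0+t)G(t)$ I differentiate once more; the computation parallels that of $E_1$ with exponent $\lambda$ instead of $\mu$, and after Cauchy--Schwarz the right-hand side is bounded by $CG(t)$ plus terms absorbed by the $\int a|\partial_tu|^2\Psi^\lambda$ dissipation. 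Integrating and invoking the already-established $\int_0^\infty G\,d\tau\leq CI_0$ and $t_0G(0)\leq CI_0$ closes the estimate. The main obstacle is the careful bookkeeping in step three: arranging the signs in Lemma \ref{lem.deltaphi} together with the supersolution inequality so that the mixed $u\partial_tu$ term is absorbed via Hardy's inequality without consuming the $|\nabla u|^2\Psi^\lambda$ dissipation that ultimately controls the $au^2\Psi^\lambda$ part of $E_0$.
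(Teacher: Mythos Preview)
Your proposal is correct and follows essentially the same route as the paper: differentiate $E_1$ and $E_0$ separately (the latter via Lemma~\ref{lem.deltaphi}, the supersolution inequality of Lemma~\ref{lem:super-sol}, the comparison Lemma~\ref{lem.phi.psi}, and Hardy's inequality Lemma~\ref{lem.hardy}), combine as $E_1+\nu E_0$ with $\nu$ small and then $t_0$ large to absorb the cross terms, and finally bootstrap to $\widetilde{E}_1$ by differentiating $(t_0+t)\int(|\nabla u|^2+|\partial_t u|^2)\Psi^\lambda$ and feeding in the already-obtained space-time bound. The only cosmetic differences are in how the $u\,\partial_t u\,\Phi_{\beta,\varepsilon}^{-2+2\delta}\partial_t\Phi_{\beta,\varepsilon}$ cross term is split (the paper puts the $t_0$-smallness on the $|\nabla u|^2\Psi^\lambda$ side rather than introducing a free Young parameter $\eta$), but the ingredients and logic are identical.
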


\subsubsection{Proof of Theorem \ref{thm.e1}}

In the proof of weighted energy estimates, 
we will assume that the initial data $(u_0,u_1)$ (and also the solution $u$ by finite propagation property) 
are compactly supported. 
All estimates proved below can be extended to 
the case of non-compactly supported initial data 
via an approximation with a cut-off procedure.

We split the proof of Theorem \ref{thm.e1} into the following four lemmas.
\begin{lemma}\label{lem.e1}
Under the assumption on Theorem \ref{thm.e1}, there exists a constant
$t_1 \ge 1$
such that
for any
$t_0 \ge t_1$ and $t \ge 0$, we have
\begin{align*}%
	\frac{d}{dt} E_1(t;t_0)
		\le - \int_{\Omega} a(x) |\partial_t u(x,t)|^2 \Psi(x,t;t_0)^{\lambda+\frac{\alpha}{2-\alpha}}\,dx
			+ C \int_{\Omega} |\nabla u(x,t)|^2 \Psi(x,t;t_0)^{\lambda+\frac{\alpha}{2-\alpha}-1}\,dx
\end{align*}%
with some constant
$C=C(N,\alpha,\varepsilon,\lambda,t_0) >0$.
\end{lemma}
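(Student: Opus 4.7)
The plan is to differentiate $E_{1}(t;t_0)$ directly, substitute $\pa_t^2u=\Delta u-a(x)\pa_tu$ from the equation, and integrate by parts the $\pa_tu\,\Delta u$ contribution. Writing $\mu:=\lambda+\tfrac{\alpha}{2-\alpha}$ and using $\pa_t\Psi\equiv 1$, the two copies of $\int_{\Omega}\nabla u\cdot\nabla\pa_tu\,\Psi^{\mu}\,dx$ cancel (the boundary integral from the by-parts vanishes since $\pa_tu=0$ on $\pa\Omega$), leaving
\begin{align*}
\frac{d}{dt}E_{1}(t;t_0)
&=-2\mu\int_\Omega (\pa_tu)(\nabla u\cdot\nabla\Psi)\Psi^{\mu-1}\,dx
-2\int_\Omega a(x)|\pa_tu|^2\Psi^{\mu}\,dx\\
&\quad+\mu\int_\Omega\bigl(|\nabla u|^2+|\pa_tu|^2\bigr)\Psi^{\mu-1}\,dx.
\end{align*}

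For the cross term I would apply Cauchy--Schwarz with the natural splitting $\sqrt{a}\,\Psi^{\mu/2}\cdot\pa_tu$ versus $\tfrac{|\nabla\Psi|}{\sqrt{a}}\Psi^{(\mu-2)/2}\cdot\nabla u$. The pointwise bound
\[
\frac{|\nabla\Psi(x,t;t_0)|^{2}}{a(x)}
=\frac{|\nabla A_\ep(x)|^{2}}{a(x)A_\ep(x)}\,A_\ep(x)
\leq \Big(\tfrac{2-\alpha}{N-\alpha}+\ep\Big)\Psi(x,t;t_0),
\]
which comes from Lemma~\ref{lem_A_ep} and from $A_\ep(x)\leq \Psi(x,t;t_0)$, then converts the cross term into $\mu\eta\int a|\pa_tu|^2\Psi^{\mu}\,dx+\tfrac{C\mu}{\eta}\int|\nabla u|^2\Psi^{\mu-1}\,dx$ for any $\eta>0$; the $|\nabla u|^2\Psi^{\mu-1}$ piece matches exactly the right-hand side of the target inequality.

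The real obstacle is the surviving term $\mu\int|\pa_tu|^2\Psi^{\mu-1}\,dx$, which has to be absorbed into $-2\int a|\pa_tu|^2\Psi^{\mu}\,dx$; equivalently, a pointwise comparison $\Psi(x,t;t_0)^{-1}\leq K\,a(x)$ with $K$ small is needed. On $\{|x|\geq R_0\}$, the assumption \eqref{ass_a} together with $A_\ep(x)\geq c_\ep\lr{x}^{2-\alpha}$ from Lemma~\ref{lem_A_ep} yields
\[
\frac{\Psi(x,t;t_0)^{-1}}{a(x)}\leq C\lr{x}^{-(2-\alpha)+\alpha}=C\lr{x}^{-2(1-\alpha)},
\]
which is made arbitrarily small by taking $R_0$ large --- this is the one place where the hypothesis $\alpha\in[0,1)$ is decisive. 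On $\{|x|\leq R_0\}$, the ratio is bounded by $C_{R_0}/t_0$ since $\Psi^{-1}\leq t_0^{-1}$ and $1/a$ is bounded on compact subsets of $\overline{\Omega}$ by continuity and positivity of $a$. Fixing $R_0$ first and then choosing $t_0\geq t_1$ sufficiently large forces $\mu\bigl(\eta+\sup_{x}\Psi^{-1}/a(x)\bigr)\leq 1$.

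Collecting the three contributions, the coefficient of $\int a|\pa_tu|^2\Psi^{\mu}\,dx$ becomes $\leq -2+\mu\eta+\mu\sup\Psi^{-1}/a(x)\leq -1$, while the coefficient of $\int|\nabla u|^2\Psi^{\mu-1}\,dx$ is the claimed constant $C=C(N,\alpha,\ep,\lambda,t_0)$. The time-differentiation under the integral and the integration by parts are justified by the regularity \eqref{sol.class} together with the compactly-supported-data reduction already invoked by the authors.
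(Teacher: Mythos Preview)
Your argument is correct and follows essentially the same route as the paper: differentiate $E_1$, substitute the equation, integrate by parts, bound the cross term via Cauchy--Schwarz together with $|\nabla\Psi|^2/a\le C\Psi$, and then absorb $\mu\int|\pa_tu|^2\Psi^{\mu-1}\,dx$ using a pointwise comparison $\Psi^{-1}\le (\text{small})\,a(x)$. The only cosmetic difference is that the paper obtains this last comparison in one stroke via the interpolation $\Psi^{-1}\le (t_0+t)^{-\frac{2(1-\alpha)}{2-\alpha}}A_\ep(x)^{-\frac{\alpha}{2-\alpha}}\le C\,t_0^{-\frac{2(1-\alpha)}{2-\alpha}}a(x)$, whereas you split into $\{|x|\ge R_0\}$ and $\{|x|\le R_0\}$; both lead to the same conclusion.
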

\begin{proof}
Since $u$ is a solution of \eqref{dw}, we compute
\begin{align*}%
	\frac{d}{dt} E_1(t;t_0)
	&= 2 \int_{\Omega} (\nabla \partial_t u \cdot \nabla u + \partial_t u \partial_t^2 u )
			\Psi^{\lambda+\frac{\alpha}{2-\alpha}} \,dx \\
	&\quad + \left( \lambda + \frac{\alpha}{2-\alpha} \right)
		\int_{\Omega} ( |\nabla u|^2 + |\partial_t u|^2 ) \Psi^{\lambda+\frac{\alpha}{2-\alpha}-1}\,dx \\
	&= -2 \int_{\Omega} a(x) |\partial_t u|^2 \Psi^{\lambda+\frac{\alpha}{2-\alpha}} \,dx
		-2 \left( \lambda + \frac{\alpha}{2-\alpha} \right)
			\int_{\Omega} \partial_t u (\nabla u \cdot \nabla \Psi) \Psi^{\lambda+\frac{\alpha}{2-\alpha}-1} \,dx \\
	&\quad + \left( \lambda + \frac{\alpha}{2-\alpha} \right)
		\int_{\Omega} ( |\nabla u|^2 + |\partial_t u|^2 ) \Psi^{\lambda+\frac{\alpha}{2-\alpha}-1}\,dx.
\end{align*}%
By the Schwarz inequality
\begin{align*}%
	\left| -2 \left( \lambda + \frac{\alpha}{2-\alpha} \right)
		\partial_t u (\nabla u \cdot \nabla \Psi) \right|
		&\le \frac{a(x)}{2} |\partial_t u|^2 \Psi + C |\nabla u|^2 \frac{|\nabla \Psi|^2}{a(x)\Psi}
\end{align*}%
and noting
\begin{align}%
\label{psi.ratio}
	\frac{|\nabla \Psi|^2}{a(x)\Psi} \le \frac{|\nabla A_{\varepsilon} (x)|^2}{a(x) A_{\varepsilon}(x)}
		\le \frac{2-\alpha}{N-\alpha} + \varepsilon,
\end{align}%
which follows from \eqref{A3},
we conclude
\begin{align*}%
	\frac{d}{dt} E_1(t;t_0)
		&\le \int_{\Omega} |\partial_tu|^2
			\left( -2a(x) + \frac{a(x)}{2} + \left(\lambda + \frac{\alpha}{2-\alpha} \right) \Psi^{-1} \right)
			\Psi^{\lambda+\frac{\alpha}{2-\alpha}} \,dx
		+ C \int_{\Omega} |\nabla u|^2 \Psi^{\lambda + \frac{\alpha}{2-\alpha} -1} \,dx.
\end{align*}%
Finally, by
$\Psi^{-1} \le t_0^{-1+\frac{\alpha}{2-\alpha}} A(x)^{-\frac{\alpha}{2-\alpha}} \le C t_0^{-1+\frac{\alpha}{2-\alpha}} a(x)$,
taking $t_1 \ge 1$ sufficiently large, we have the desired estimate.
\end{proof}

\begin{lemma}\label{lem.e0}
Under the assumption on Theorem \ref{thm.e1}, there exists a constant $t_2 \ge 1$ such that
for any
$t_0 \ge t_2$ and $t \ge 0$,
we have
\begin{align*}
	\frac{d}{dt} E_{0}(t;t_0)
		&\le
		-  \eta_0 \int_{\Omega} |\nabla u(x,t) |^2 \Psi(x,t;t_0)^{\lambda}\,dx
		+ C \int_{\Omega} |\partial_t u(x,t)|^2 \Psi(x,t;t_0)^{\lambda} \,dx
\end{align*}
with some constants
$\eta_0 = \eta_0 (N,\alpha,\varepsilon,\delta,\lambda,t_0) > 0$
and
$C = C(N,\alpha,\varepsilon,\delta,\lambda,t_0) > 0$.
\end{lemma}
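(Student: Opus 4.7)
The plan is to differentiate $E_0(t;t_0)$, exploit the damped wave equation together with the supersolution property of $\Phi_{\beta,\varepsilon}$ (Lemma~\ref{lem:super-sol}) and the recurrence $\partial_t\Phi_{\beta,\varepsilon}=-\beta\Phi_{\beta+1,\varepsilon}$ (Lemma~\ref{lem.dtphi}), and then convert the result to $\Psi$-weights via Lemma~\ref{lem.phi.psi}. Using $\partial_t^2 u=\Delta u-a(x)\partial_t u$, the pointwise identity
\[
\partial_t\bigl(2u\,\partial_t u + a(x)|u|^2\bigr) = 2|\partial_t u|^2 + 2u\,\Delta u
\]
holds because the two $\pm 2a(x)u\,\partial_t u$ contributions cancel. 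Differentiating $E_0$ and applying Lemma~\ref{lem.dtphi} therefore gives
\[
\frac{d}{dt}E_0 = \int_\Omega\bigl(2|\partial_t u|^2+2u\,\Delta u\bigr)\Phi_{\beta,\varepsilon}^{-1+2\delta}\,dx + (1-2\delta)\beta\int_\Omega\bigl(2u\,\partial_t u + a(x)|u|^2\bigr)\Phi_{\beta+1,\varepsilon}\Phi_{\beta,\varepsilon}^{-2+2\delta}\,dx.
\]

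To the $u\,\Delta u$ piece I apply Lemma~\ref{lem.deltaphi}, producing a favorable gradient term $-\tfrac{2\delta}{1-\delta}\int|\nabla u|^2\Phi_{\beta,\varepsilon}^{-1+2\delta}\,dx$ and a remainder $(1-2\delta)\int u^2\,\Delta\Phi_{\beta,\varepsilon}\,\Phi_{\beta,\varepsilon}^{-2+2\delta}\,dx$. The supersolution bound $\Delta\Phi_{\beta,\varepsilon}\le a(x)\partial_t\Phi_{\beta,\varepsilon}=-\beta a(x)\Phi_{\beta+1,\varepsilon}$ shows that this remainder cancels exactly against the $a(x)|u|^2$ part of the second integral above, so only the $|\partial_t u|^2$ term, the good gradient term, and the cross term
\[
2(1-2\delta)\beta\int_\Omega u\,\partial_t u\,\Phi_{\beta+1,\varepsilon}\Phi_{\beta,\varepsilon}^{-2+2\delta}\,dx
\]
survive. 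By Lemma~\ref{lem.phi.psi} and $\beta=\lambda/(1-2\delta)$ we have $\Phi_{\beta,\varepsilon}^{-1+2\delta}\asymp\Psi^{\lambda}$ and $\Phi_{\beta+1,\varepsilon}\Phi_{\beta,\varepsilon}^{-2+2\delta}\asymp\Psi^{\lambda-1}$. Splitting the cross term by Young's inequality as $\eta\,a(x)u^2\Psi^{\lambda-1}+C_\eta|\partial_t u|^2\Psi^{\lambda-1}/a(x)$, the first piece is absorbed into the good gradient term by Lemma~\ref{lem.hardy} (whose hypothesis on $\lambda$ is guaranteed by $\lambda<(1-2\delta)\gamma_\varepsilon$), while for the second piece the bound $1/a(x)\le C\langle x\rangle^{\alpha}\le C\Psi^{\alpha/(2-\alpha)}$ together with $\alpha/(2-\alpha)-1<0$ for $\alpha\in[0,1)$ yields a control by $C_\eta|\partial_t u|^2\Psi^{\lambda}$, matching the stated right-hand side.

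The main obstacle is quantitative bookkeeping of absorption constants. The net coefficient of $\int|\nabla u|^2\Psi^{\lambda}\,dx$ after the Hardy and Young absorptions must remain strictly negative, and the discrepancy between $\Phi_{\beta,\varepsilon}^{-1+2\delta}$ and $\Psi^{\lambda}$ produces lower-order corrections that must be swallowed using $\Psi^{-1}\le t_0^{-1}$. I would therefore fix first $\varepsilon$ small (to control the Hardy constant and the comparability constants in Lemma~\ref{lem.phi.psi}), then $\eta$ small (to absorb the Young piece), and finally $t_0\ge t_2$ large enough to close the remaining lower-order terms. With these choices the surviving inequality has the required form with some $\eta_0=\eta_0(N,\alpha,\varepsilon,\delta,\lambda,t_0)>0$ and $C=C(N,\alpha,\varepsilon,\delta,\lambda,t_0)>0$.
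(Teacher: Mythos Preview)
Your argument is correct and follows the same skeleton as the paper's proof: differentiate $E_0$, apply Lemma~\ref{lem.deltaphi} to the $u\Delta u$ term, combine the resulting $u^2\Delta\Phi_{\beta,\varepsilon}$ remainder with the $a(x)|u|^2\partial_t\Phi_{\beta,\varepsilon}$ piece and drop their sum via the supersolution inequality, convert to $\Psi$-weights with Lemma~\ref{lem.phi.psi}, and control the surviving cross term $\int |u|\,|\partial_t u|\,\Psi^{\lambda-1}\,dx$ using Hardy's inequality. Two small remarks: (i) the supersolution step does not give an exact cancellation---the combined term $-(1-2\delta)\int u^2(a\partial_t\Phi_{\beta,\varepsilon}-\Delta\Phi_{\beta,\varepsilon})\Phi_{\beta,\varepsilon}^{-2+2\delta}\,dx$ is merely nonpositive and is discarded; (ii) the $\Phi_{\beta,\varepsilon}^{-1+2\delta}\asymp\Psi^\lambda$ comparison from Lemma~\ref{lem.phi.psi} involves only fixed constants, so there are no ``lower-order corrections'' requiring $t_0$ large. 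In fact your Young-then-Hardy treatment of the cross term (choose $\eta$ small to absorb into the gradient, then bound $a(x)^{-1}\Psi^{-1}\le C\Psi^{\alpha/(2-\alpha)-1}\le C$) closes the estimate without any largeness of $t_0$; the paper instead applies Cauchy--Schwarz first and extracts a factor $(t_0+t)^{-(1-\alpha)/(2-\alpha)}$ before Young, which is why it needs $t_0\ge t_2$. Either ordering works, and yours is arguably slightly cleaner here.
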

\begin{proof}
Since $u$ is a solution of \eqref{dw}, we compute
\begin{align*}
	\frac{d}{dt} E_{0}(t;t_0)
		&= 2 \int_{\Omega} |\partial_t u|^2 \Phi_{\beta,\varepsilon}^{-1+2\delta} \,dx
			+ 2 \int_{\Omega} u( \partial_t^2 u + a(x) \partial_t u ) \Phi_{\beta,\varepsilon}^{-1+2\delta}\,dx \\
		&\quad
			-(1-2\delta) \int_{\Omega} (2 u\partial_t u + a(x) |u|^2 )
					\Phi_{\beta,\varepsilon}^{-2+2\delta} \partial_t \Phi_{\beta,\varepsilon}  \,dx \\
		&= 2 \int_{\Omega} |\partial_t u|^2 \Phi_{\beta,\varepsilon}^{-1+2\delta} \,dx
			+ 2 \int_{\Omega} u \Delta u \Phi_{\beta,\varepsilon}^{-1+2\delta}\,dx \\
		&\quad -2(1-2\delta) \int_{\Omega} u \partial_t u 
				\Phi_{\beta,\varepsilon}^{-2+2\delta} \partial_t \Phi_{\beta,\varepsilon}  \,dx
			-(1-2\delta) \int_{\Omega} a(x) |u|^2
				\Phi_{\beta,\varepsilon}^{-2+2\delta} \partial_t \Phi_{\beta,\varepsilon}  \,dx.
\end{align*}
Lemma \ref{lem.deltaphi} with $\Phi = \Phi_{\beta,\varepsilon}$ yields
\begin{align*}
	\frac{d}{dt} E_{0}(t;t_0)
		&\le 2 \int_{\Omega} |\partial_t u|^2 \Phi_{\beta,\varepsilon}^{-1+2\delta} \,dx
			- \frac{2 \delta}{1-\delta} \int_{\Omega} |\nabla u|^2 \Phi_{\beta,\varepsilon}^{-1+2\delta}\,dx \\
		&\quad -2(1-2\delta) \int_{\Omega} u \partial_t u 
				\Phi_{\beta,\varepsilon}^{-2+2\delta} \partial_t \Phi_{\beta,\varepsilon} \,dx
			- (1-2\delta) \int_{\Omega} |u|^2
				\Phi_{\beta,\varepsilon}^{-2+2\delta}
					(a(x)\partial_t \Phi_{\beta,\varepsilon} - \Delta \Phi_{\beta,\varepsilon}) \,dx.
\end{align*}
By Lemma \ref{lem:super-sol}, $\Phi_{\beta,\varepsilon}$ satisfies
\begin{align*}
	a(x)\partial_t \Phi_{\beta,\varepsilon} - \Delta \Phi_{\beta,\varepsilon} \ge 0.
\end{align*}
Moreover, by noting
$\beta \in (0,\gamma_{\varepsilon})$,
we apply Lemmas \ref{lem.phi.psi} and \ref{lem.dtphi} to obtain
\begin{align}
\nonumber
	\frac{d}{dt} E_{0}(t;t_0)
		&\le \frac{2}{c_{\alpha,\beta,\varepsilon}}
			\int_{\Omega} |\partial_t u|^2 \Psi^{\lambda} \,dx
		 - \frac{2 \delta}{(1-\delta)C_{\alpha,\beta,\varepsilon}}
		 	\int_{\Omega} |\nabla u|^2 \Psi^{\lambda}\,dx \\
\label{eq.de0}
		&\quad
			+ \frac{2(1-2\delta)\beta C_{\alpha,\beta+1,\varepsilon}}{c_{\alpha,\beta,\varepsilon}^{2-2\delta}}
			 \int_{\Omega} |u| |\partial_t u| \Psi^{\lambda-1} \,dx
\end{align}
With the aid of Lemma \ref{lem.hardy}, the last term is estimated as
\begin{align*}
	 \int_{\Omega} |u| |\partial_t u| \Psi^{\lambda-1} \,dx
	 &\le \left(  \int_{\Omega} a(x) |u|^2 \Psi^{\lambda-1} \,dx \right)^{1/2}
	 	\left(  \int_{\Omega} a(x)^{-1} |\partial_t u|^2 \Psi^{\lambda-1} \,dx \right)^{1/2} \\
	&\le C \left(  \int_{\Omega} |\nabla u|^2 \Psi^{\lambda} \,dx \right)^{1/2}
		\left(  \int_{\Omega} a(x)^{-1} |\partial_t u|^2 \Psi^{\lambda-1} \,dx \right)^{1/2} \\
	&\le C(t+t_0)^{-\frac{1-\alpha}{2-\alpha}}
		\left(  \int_{\Omega} |\nabla u|^2 \Psi^{\lambda} \,dx \right)^{1/2}
		\left(  \int_{\Omega} |\partial_t u|^2 \Psi^{\lambda} \,dx \right)^{1/2} \\
	&\le C \int_{\Omega} |\partial_t u|^2 \Psi^{\lambda} \,dx 
		+ C t_0^{-\frac{2(1-\alpha)}{2-\alpha}} \int_{\Omega} |\nabla u|^2 \Psi^{\lambda} \,dx.
\end{align*}
Here, for the third inequality step we have used the following:
\begin{align*}
	\frac{1}{a(x) \Psi(x,t;t_0)}
		&\le C \frac{\langle x \rangle^{\alpha}}{t+t_0 + A_\ep(x)}
		\le C \frac{1}{(t + t_0)^{\frac{2(1-\alpha)}{2-\alpha}}}
			\frac{\langle x \rangle^{\alpha}}{A_\ep(x)^{\frac{\alpha}{2-\alpha}}}
		\le C \frac{1}{(t + t_0)^{\frac{2(1-\alpha)}{2-\alpha}}}.
\end{align*}
Thus, taking
$t_2 \ge 1$
sufficiently large,
we conclude
\begin{align*}
	\frac{d}{dt} E_{0}(t;t_0)
		&\le - \eta_0 \int_{\Omega} |\nabla u|^2 \Psi^{\lambda}\,dx
			+ C \int_{\Omega} |\partial_t u|^2 \Psi^{\lambda} \,dx
\end{align*}
for $t_0 \ge t_2$ and $t \ge 0$ with some constant $\eta_0>0$.
\end{proof}

Noting $\Psi^{\lambda} \le C a(x) \Psi^{\lambda + \frac{\alpha}{2-\alpha}}$
and
combining Lemmas \ref{lem.e1} and \ref{lem.e0},
we have an energy estimate for $E(t;t_0,\nu)$.
\begin{lemma}\label{lem.e}
Under the assumption on \ref{thm.e1}, for any
$t_0 \ge \max\{ t_1, t_2 \}$,
there exists
$\nu = \nu(N,\alpha,\varepsilon,\delta,\lambda,t_0) > 0$
such that
\begin{align*}
	E(t;t_0,\nu) &\ge
		c \int_{\Omega} (|\nabla u(x,t)|^2 + |\partial_t u(x,t)|^2) \Psi(x,t;t_0)^{\lambda+\frac{\alpha}{2-\alpha}} \,dx
			+ c \int_{\Omega} a(x) |u(x,t)|^2 \Psi(x,t;t_0)^{\lambda} \,dx
\end{align*}
and
\begin{align*}
	&E(t;t_0,\nu)
		+ \int_0^t \int_{\Omega}
			(|\nabla u(x,\tau)|^2 + |\partial_t u(x,\tau)|^2) \Psi(x,\tau;t_0)^{\lambda} \,dxd\tau \\
	&\quad + \int_0^t \int_{\Omega}
		a(x) |\partial_t u(x,\tau)|^2 \Psi(x,\tau;t_0)^{\lambda+\frac{\alpha}{2-\alpha}}\,dxd\tau \\
	&\le C E(0;t_0,\nu)
\end{align*}
hold for
$t\ge 0$
with some constants
$c = c(N,\alpha,\varepsilon,\delta,\lambda,t_0) > 0$
and
$C = C(N,\alpha,\varepsilon,\delta,\lambda,t_0) > 0$.
\end{lemma}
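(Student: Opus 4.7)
The plan is to combine Lemmas \ref{lem.e1} and \ref{lem.e0} via the multiplier $\nu$, then pick $\nu$ small and $t_0$ large (in that order) so that the dissipation absorbs the error terms.

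For the coercivity of $E(t;t_0,\nu)$, I would first note that the choice $\beta = \lambda/(1-2\delta)$ and Lemma \ref{lem.phi.psi} give $\Phi_{\beta,\varepsilon}^{-1+2\delta} \sim \Psi^{\lambda}$. The indefinite cross term $2u\partial_t u$ in $E_0$ is controlled by Young's inequality $|2u\partial_t u| \le \kappa\,a(x)|u|^2 + \kappa^{-1}a(x)^{-1}|\partial_t u|^2$ for some small $\kappa \in (0,1)$. The remaining piece $\int a(x)^{-1}|\partial_t u|^2 \Phi_{\beta,\varepsilon}^{-1+2\delta}\,dx$ is dominated, via $a(x)^{-1} \le C\Psi^{\alpha/(2-\alpha)}$, by $C\int|\partial_t u|^2\Psi^{\lambda+\alpha/(2-\alpha)}\,dx$, which is exactly part of $E_1$. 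Taking $\nu$ small enough then yields the claimed lower bound with positive contributions from $|\nabla u|^2$, $|\partial_t u|^2$ and $a(x)|u|^2$.

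For the differential inequality, summing the estimate of Lemma \ref{lem.e1} and $\nu$ times that of Lemma \ref{lem.e0} produces a negative dissipation $-\int a(x)|\partial_t u|^2\Psi^{\lambda+\alpha/(2-\alpha)}\,dx - \nu\eta_0\int|\nabla u|^2\Psi^{\lambda}\,dx$ together with two error terms $C\int|\nabla u|^2\Psi^{\lambda+\alpha/(2-\alpha)-1}\,dx$ and $\nu C\int|\partial_t u|^2\Psi^{\lambda}\,dx$. Since $\alpha \in [0,1)$ gives $\alpha/(2-\alpha)-1 < 0$ and $\Psi \ge t_0$, the first error is bounded by $Ct_0^{\alpha/(2-\alpha)-1}\int|\nabla u|^2\Psi^{\lambda}\,dx$ and is absorbed into $-\nu\eta_0\int|\nabla u|^2\Psi^{\lambda}\,dx$ once $t_0$ is large. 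The uniform lower bound $a(x)\Psi^{\alpha/(2-\alpha)} \ge c > 0$, which follows from $a(x) \sim \langle x\rangle^{-\alpha}$ together with $\Psi^{\alpha/(2-\alpha)} \ge c\langle x\rangle^{\alpha}$, allows the first dissipation to dominate the second error for $\nu$ small, still leaving a fixed fraction of $-\int a(x)|\partial_t u|^2\Psi^{\lambda+\alpha/(2-\alpha)}\,dx$. Integration in $t$ and the coercivity then yield the claimed estimate.

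The main obstacle is the order of parameter selection: $\nu$ must be chosen first, small relative to the constants from Young's inequality, from Lemma \ref{lem.phi.psi}, and from $\eta_0$ in the absorption of the $|\partial_t u|^2$ error. Only afterwards can $t_0$ be taken large in terms of $\nu$ to dominate the $|\nabla u|^2$ error. The two-sided comparison $\Phi_{\beta,\varepsilon}^{-1+2\delta} \sim \Psi^{\lambda}$ is used in opposite directions for coercivity and for the dissipation step, so the constants $c_{\alpha,\beta,\varepsilon}$, $C_{\alpha,\beta,\varepsilon}$ of Lemma \ref{lem.phi.psi} must be tracked consistently, but no analysis beyond the two earlier lemmas is needed.
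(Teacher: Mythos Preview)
Your argument is correct and follows the paper's approach; the paper itself gives no detailed proof here, only the one-line remark that the result follows by noting $\Psi^{\lambda}\le C\,a(x)\,\Psi^{\lambda+\frac{\alpha}{2-\alpha}}$ and combining Lemmas~\ref{lem.e1} and~\ref{lem.e0}. Your treatment of the coercivity via Young's inequality and of the two absorption steps (the $|\partial_t u|^2$ error through $a(x)\Psi^{\alpha/(2-\alpha)}\ge c$, the $|\nabla u|^2$ error through $\Psi^{\alpha/(2-\alpha)-1}\le t_0^{\alpha/(2-\alpha)-1}$) is exactly what is needed to unpack that hint.

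One point to flag: the order of parameter selection you describe (fix $\nu$ small, then take $t_0$ large depending on $\nu$) does not match the quantifier structure of the stated lemma, which reads ``for any $t_0\ge\max\{t_1,t_2\}$ there exists $\nu=\nu(t_0)$''. With $t_0$ fixed first, the two absorption conditions force $\nu$ into the interval $\bigl(Ct_0^{\alpha/(2-\alpha)-1}/\eta_0,\ 1/(CC')\bigr)$, which is nonempty only once $t_0$ exceeds a threshold that may be larger than $\max\{t_1,t_2\}$. So your argument (and, implicitly, the paper's) actually proves the lemma with a possibly larger threshold $t_*\ge\max\{t_1,t_2\}$; this is harmless for the downstream Theorem~\ref{thm.e1}, which introduces its own $t_*$, but it is worth stating the dependence correctly rather than asserting that $\nu$ is chosen before $t_0$.
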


Finally, using Lemma \ref{lem.e},
we give the following energy estimate for $\widetilde{E}_1(t;t_0)$.
\begin{lemma}\label{lem.e1.til}
Under the assumption on Theorem \ref{thm.e1}, for any
$t_0 \ge \max\{ t_1, t_2 \}$,
there exists
$\nu = \nu(N,\alpha,\varepsilon,\delta,\lambda,t_0) > 0$
such that
\begin{align*}
	\widetilde{E}_{1}(t;t_0)
		+ \int_0^t (t_0+\tau) \int_{\Omega}
			a(x) |\partial_t u(x,\tau)|^2 \Psi(x,\tau;t_0)^{\lambda} \,dxd\tau
	\le  C E(0;t_0)
\end{align*}
with some constant
$C = C(N,\alpha,\varepsilon,\delta,\lambda,t_0) > 0$.
\end{lemma}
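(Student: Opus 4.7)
The plan is to differentiate $\widetilde{E}_1(t;t_0)$ directly, use the wave equation together with integration by parts, and then close the estimate by appealing to Lemma \ref{lem.e}.

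First I would compute $\frac{d}{dt}\widetilde{E}_1$. The product rule gives a term coming from differentiating $(t_0+t)$ and a term $(t_0+t)\frac{d}{dt}\!\int(|\nabla u|^2+|\partial_t u|^2)\Psi^{\lambda}\,dx$. For the latter, I would use $\partial_t^2 u=\Delta u-a(x)\partial_t u$ and integrate by parts in the term containing $\partial_t u\,\Delta u$ (noting $\partial_t u=0$ on $\partial\Omega$), which eliminates the mixed $\nabla u\cdot\nabla\partial_t u$ contribution and produces a dissipation term $-2\int a(x)|\partial_t u|^2\Psi^{\lambda}\,dx$, plus a cross term $-2\lambda\int\partial_t u\,(\nabla u\cdot\nabla\Psi)\Psi^{\lambda-1}\,dx$ and a weight-derivative term $\lambda\int(|\nabla u|^2+|\partial_t u|^2)\Psi^{\lambda-1}\,dx$.

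Next I would handle the cross term by Cauchy--Schwarz, absorbing half of the dissipation:
\[
\bigl|2\lambda\,\partial_t u\,(\nabla u\cdot\nabla\Psi)\Psi^{\lambda-1}\bigr|
\le a(x)|\partial_t u|^2\Psi^{\lambda}+C|\nabla u|^2\frac{|\nabla\Psi|^2}{a(x)\Psi}\Psi^{\lambda-1},
\]
and use \eqref{psi.ratio} to replace $|\nabla\Psi|^2/(a(x)\Psi)$ by a constant. After multiplying through by $(t_0+t)$ and using the pointwise bound $(t_0+t)\Psi^{-1}\le 1$ to control $(t_0+t)\Psi^{\lambda-1}\le \Psi^{\lambda}$, I arrive at the differential inequality
\[
\frac{d}{dt}\widetilde{E}_1(t;t_0)+(t_0+t)\int_{\Omega}a(x)|\partial_t u|^2\Psi^{\lambda}\,dx
\le C\int_{\Omega}\bigl(|\nabla u|^2+|\partial_t u|^2\bigr)\Psi^{\lambda}\,dx.
\]

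Finally I would integrate this inequality from $0$ to $t$. The initial value $\widetilde{E}_1(0;t_0)=t_0\int_{\Omega}(|\nabla u_0|^2+|u_1|^2)\Psi(x,0;t_0)^{\lambda}\,dx$ is dominated by $CI_0$ (hence by $CE(0;t_0,\nu)$ via the coercivity part of Lemma \ref{lem.e}), and the spacetime integral on the right-hand side is exactly the quantity already controlled by $CE(0;t_0,\nu)$ in Lemma \ref{lem.e}. Combining these bounds yields the desired conclusion.

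The main obstacle I anticipate is organizing the absorption of the cross term together with the factor $(t_0+t)$: I must ensure that after using Cauchy--Schwarz and the Hardy-type relation $(t_0+t)\Psi^{-1}\le 1$, the remaining gradient term is majorized by the spacetime integral of $(|\nabla u|^2+|\partial_t u|^2)\Psi^{\lambda}$ furnished by Lemma \ref{lem.e}, so that no stronger weight (such as $\Psi^{\lambda+\frac{\alpha}{2-\alpha}}$) is needed on the right-hand side.
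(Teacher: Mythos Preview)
Your proposal is correct and follows essentially the same route as the paper: differentiate $\widetilde{E}_1$, integrate by parts using the equation, absorb the cross term via Cauchy--Schwarz with \eqref{psi.ratio} and the pointwise bound $(t_0+t)\Psi^{-1}\le 1$, then integrate in time and invoke Lemma~\ref{lem.e}. The organization of constants and the handling of $\widetilde{E}_1(0;t_0)$ match the paper's argument.
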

\begin{proof}
By integration by parts and the Schwarz inequality, we compute
\begin{align*}
	\frac{d}{dt} \widetilde{E}_{1}(t;t_0)
		&= \int_{\Omega} \left( |\nabla u|^2 + |\partial_t u|^2 \right)
			[ \Psi^{\lambda} + \lambda (t_0+t) \Psi^{\lambda-1} ] \,dx \\
		&\quad + 2(t_0+t) \int_{\Omega}
				\left( \nabla \partial_t u \cdot \nabla u + \partial_t u \partial_t^2 u \right)
				\Psi^{\lambda} \,dx \\
		&\le (\lambda +1) \int_{\Omega}
			\left( |\nabla u|^2 + |\partial_t u|^2 \right) \Psi^{\lambda} \,dx \\
		&\quad
			-2(t_0+t) \int_{\Omega} a(x) |\partial_t u|^2 \Psi^{\lambda} \,dx
			-2\lambda (t_0+t) \int_{\Omega}
				\partial_t u (\nabla u \cdot \nabla \Psi) \Psi^{\lambda -1} \,dx \\
		&\le (\lambda +1) \int_{\Omega}
			\left( |\nabla u|^2 + |\partial_t u|^2 \right) \Psi^{\lambda} \,dx \\
		&\quad -(t_0+t) \int_{\Omega} a(x) |\partial_t u|^2 \Psi^{\lambda} \,dx
			+ \lambda^2 (t_0+t) \int_{\Omega}
					|\nabla u|^2 \frac{|\nabla \Psi|^2}{a(x) \Psi} \Psi^{\lambda-1} \,dx \\
		&\le C \int_{\Omega}
			\left( |\nabla u|^2 + |\partial_t u|^2 \right) \Psi^{\lambda} \,dx
			- (t_0+t) \int_{\Omega} a(x) |\partial_t u|^2 \Psi^{\lambda} \,dx.
\end{align*}
Integrating the above on $[0,t]$, applying Lemma \ref{lem.e}, and noting
$  \widetilde{E}_{1}(0;t_0) \le C E(0;t_0 ,\nu)$,
we deduce
\begin{align*}
	\widetilde{E}_{1}(t;t_0)
		+ \int_0^t (t_0+\tau) \int_{\Omega} a(x) |\partial_t u|^2 \Psi^{\lambda} \,dx
		\le C E(0;t_0, \nu),
\end{align*}
which completes the proof.
\end{proof}

Theorem \ref{thm.e1} immediately follows from
Lemmas \ref{lem.e1} and \ref{lem.e1.til}.

\subsection{Weighted energy estimates for $N=1$}
In the one-dimensional case, instead of Lemma \ref{lem.hardy},
we use a modified weight function
\begin{align*}
	\widetilde{\Phi}_{\beta,\varepsilon}(x,t;t_0) :=
		\left( 2- \frac{1}{(t_0+t)^{\frac{2(1-\alpha)}{2-\alpha}}}  \right) \Phi_{\beta,\varepsilon}(x,t;t_0).
\end{align*}
Then, $\widetilde{\Phi}_{\beta, \varepsilon}$ satisfies
\begin{align*}
	a(x) \partial_t \widetilde{\Phi}_{\beta,\varepsilon}(x,t;t_0) 
		- \Delta \widetilde{\Phi}_{\beta,\varepsilon}(x,t;t_0)
		\ge \frac{1-\alpha}{2-\alpha} a(x)
				(t_0+t)^{-2 + \frac{\alpha}{2-\alpha}} \widetilde{\Phi}_{\beta,\varepsilon}(x,t;t_0).
\end{align*}
Using
$\widetilde{\Phi}_{\beta, \varepsilon}$,
we modify the definition of $E_0(t;t_0)$ as
\begin{align*}
	E_0(t;t_0) = E_{0}(t;t_0)
		&:= \int_{\Omega} \left( 2u(x,t) \partial_t u(x,t) + a(x) |u(x,t)|^2 \right)
			\widetilde{\Phi}_{\beta,\varepsilon}(x,t;t_0)^{-1+2\delta} \,dx.
\end{align*}
Therefore, in the proof of Lemma \ref{lem.e0},
instead of \eqref{eq.de0}, we obtain
\begin{align*}
	\frac{d}{dt} E_{0}(t;t_0)
		&\le C \int_{\mathbb{R}} |\partial_t u|^2 \Psi^{\lambda} \,dx
			 - \eta \int_{\mathbb{R}} |\nabla u|^2 \Psi^{\lambda}\,dx
			 - \eta (t_0+t)^{-2 + \frac{\alpha}{2-\alpha}} \int_{\mathbb{R}} a(x)|u|^2 \Psi^{\lambda}\,dx \\
		&\quad + C \int_{\mathbb{R}} |u| |\partial_t u| \Psi^{\lambda-1} \,dx
\end{align*}
with some
$C, \eta>0$.
The last term is estimated as
\begin{align*}
	\int_{\mathbb{R}} |u| |\partial_t u| \Psi^{\lambda-1} \,dx
	&\le \tilde{\eta} \int_{\mathbb{R}} |u|^2 \Psi^{\lambda-2} \,dx
		+ C \int_{\mathbb{R}} |\partial_t u|^2 \Psi^{\lambda}\,dx.
\end{align*}
Noting
$\Psi^{-2} \le (t_0+t)^{-2+\frac{\alpha}{2-\alpha}} A_\ep(x)^{-\frac{\alpha}{2-\alpha}}
\le C (t_0+t)^{-2+\frac{\alpha}{2-\alpha}} a(x)$
and taking $\tilde{\eta}$ sufficiently small, we have the same conclusion of Lemma \ref{lem.e0}.
The rest part is completely the same as in the case $N\ge 2$,
and we have the same conclusion of Theorem \ref{thm.e1} in the case $N=1$.

\subsection{Weighted energy estimates for higher order derivatives}
In this subsection, we discuss weighted energy estimates for
higher order derivatives of the solution.
For $k \in \mathbb{N}$,
We say that the initial data satisfy the compatibility condition of order $k$ if
\begin{align*}
	u_{\ell} = \Delta u_{\ell-2} - a(x) u_{\ell-1},\quad
	(u_{\ell-1}, u_{\ell})\in (H^2\cap H^1_0(\Omega)) \times H^1_0(\Omega),\quad
	\quad (\ell=2,\ldots,k+1)
\end{align*}
can be successively defined.
For $k \in \mathbb{N}$,
It is known that if
$(u_0,u_1) \in (H^{k+2} \cap H^1_0(\Omega)) \times (H^{k+1} \cap H^1_0(\Omega))$
fulfill the compatibility condition of order
$k$,
then the solution of \eqref{dw} satisfies
\begin{align}
\label{sol.class.k}
	u \in \bigcap_{\ell=0}^{k+2} C^{\ell} ([0,\infty); H^{k-\ell+2}(\Omega))
\end{align}
in addition to \eqref{sol.class}
(see \cite[Theorem 2]{Ikawa}).

\begin{definition}[Weighted energy of higher order derivatives]\label{def.ek}
Let
$k \in \mathbb{N}$,
$\delta \in (0,1/2)$,
$\varepsilon \in (0,1)$,
and
$\lambda \in (0, (1-2\delta)\gamma_{\varepsilon})$.
Let
$t_0 \ge 1$
and
$\nu_{k,j} > 0$
with $j=0,1,\ldots,2k$
be sufficiently large and small, respectively, and determined later.
For a function
$w = w(x,t)$,
we define the weighted energy for $t \ge 0$ by
\begin{align}
\label{e1k}
	E_{1}^{(k,j)}[w] (t; t_0)
		&:= (t_0+t)^j \int_{\Omega}
			\left( |\nabla w(x,t)|^2 + | \partial_t w(x,t)|^2 \right)
			\Psi(x,t;t_0)^{\lambda+ (2k+1-j)\frac{\alpha}{2-\alpha}} \,dx
\end{align}
for $j=0,1,\ldots,2k+1$, and
\begin{align}
\label{e0k}
	E_{0}^{(k,j)} [w](t;t_0)
		&:= (t_0+t)^j \int_{\Omega} \left( 2w(x,t) \partial_t w(x,t) + a(x) |w(x,t)|^2 \right)
			\Psi (x,t;t_0)^{\lambda + (2k-j)\frac{\alpha}{2-\alpha}} \,dx,\\
\label{enuk}
	E^{(k,j)} [w] (t;t_0,\nu_{k,j})
		&:= E_{1}^{(k,j)}(t; t_0) + \nu_{k,j} E_0^{(k,j)} (t;t_0)
\end{align}
for $j=0,1,\ldots,2k$.
\end{definition}

The main result of this subsection is the following weighted energy estimates
for time derivatives of the solution,
which improves our previous result in \cite[Theorem 4.1]{SoWa19_CCM}.

\begin{theorem}\label{thm.ek}
Let $k\in \mathbb{N}$
and let the initial data $(u_0,u_1)$ satisfy the compatibility condition of order $k$.
Then, there exist
$t_{*} \ge 1$
and
$\nu_{k,j} > 0$
with $j=0,1,\ldots,2k$
such that
for any $t_0 \ge t_{*}$
the following holds:
Assume that the initial data satisfy
\begin{align*}
	I_{k} := \sum_{\ell=0}^k
		\left[ \int_{\Omega} (|\nabla u_{\ell}(x)|^2 +| u_{\ell+1} (x)|^2 )
		\Psi(x,0;t_0)^{\lambda + (2\ell+1) \frac{\alpha}{2-\alpha}} \,dx
		+ \int_{\Omega} a(x) |u_{\ell}(x)|^2
			\Psi(x,0;t_0)^{\lambda + 2\ell \frac{\alpha}{2-\alpha}} \,dx \right] < \infty.
\end{align*}
Let $u$ be the corresponding solution in the class \eqref{sol.class} and \eqref{sol.class.k}.
Then, we have
\begin{align*}
	&\sum_{j=0}^{2k+1} E_1^{(k,j)} [\partial_t^k u](t;t_0)
		+ \sum_{j=0}^{2k} (t_0+t)^j \int_{\Omega} a(x) |\partial_t^k u(x,t)|^2
				\Psi(x,t;t_0)^{\lambda + (2k-j)\frac{\alpha}{2-\alpha}} \,dx \\
	&\quad + \sum_{j=0}^{2k+1} \int_0^t (t_0+\tau)^{j}
			\int_{\Omega} a(x) |\partial_t^{k+1} u(x,\tau)|^2
				\Psi(x,\tau;t_0)^{\lambda + (2k+1-j)\frac{\alpha}{2-\alpha}} \,dxd\tau \\
	&\quad + \sum_{j=0}^{2k} \int_0^t (t_0+\tau)^{j}
			\int_{\Omega} |\nabla \partial_t^k u(x,\tau)|^2
				\Psi(x,\tau;t_0)^{\lambda + (2k-j)\frac{\alpha}{2-\alpha}} \,dxd\tau \\
	&\le C I_{k}
\end{align*}
for
$t \ge 0$
with some constant
$C = C(k, N,\alpha, \delta, \varepsilon,\lambda,t_0,\nu_{k,0},\ldots, \nu_{k,2k})>0$.
\end{theorem}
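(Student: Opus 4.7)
The proof proceeds by induction on $k$. The base case $k=0$ is precisely Theorem \ref{thm.e1}, whose conclusion covers $j=0,1$ via the identifications $E_1^{(0,0)}[u]=E_1$ and $E_1^{(0,1)}[u]=\widetilde{E}_1$. For the inductive step, the key observation is that $w_k := \pa_t^k u$ again solves $\pa_t^2 w_k - \Delta w_k + a(x)\pa_t w_k = 0$ with vanishing Dirichlet data (thanks to the regularity class \eqref{sol.class.k}) and initial values $w_k(\cdot,0)=u_k$, $\pa_t w_k(\cdot,0)=u_{k+1}$ furnished by the compatibility hierarchy $u_\ell = \Delta u_{\ell-2} - a u_{\ell-1}$. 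The problem therefore reduces to rerunning the machinery of Lemmas \ref{lem.e1}, \ref{lem.e0}, and \ref{lem.e1.til} on $w_k$ with the refined family of weights $(t_0+t)^j \Psi^{p_j}$, where $p_j := \lambda + (2k+1-j)\frac{\alpha}{2-\alpha}$ for $j=0,\ldots,2k+1$.

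For each $j$, differentiating $E_1^{(k,j)}[w_k]$ in $t$ and inserting the equation yields the damping dissipation $-2(t_0+t)^j\int a|\pa_t w_k|^2 \Psi^{p_j}\,dx$ alongside two error terms: the cross term from $\pa_t w_k(\nabla w_k\cdot\nabla\Psi)\Psi^{p_j-1}$, tamed by Cauchy--Schwarz together with \eqref{psi.ratio} exactly as in Lemma \ref{lem.e1}, and the genuinely new contribution $j(t_0+t)^{j-1}\int(|\nabla w_k|^2 + |\pa_t w_k|^2)\Psi^{p_j}\,dx$ coming from the time-power weight. The essential ingredient that closes the estimate is the ladder inequality
\begin{align*}
\Psi(x,t;t_0)^{-\alpha/(2-\alpha)} \le C\, a(x),
\end{align*}
a direct consequence of $\Psi \ge cA_\ep(x) \ge c'\lr{x}^{2-\alpha}$ together with $a(x) \sim \lr{x}^{-\alpha}$. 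It yields $\Psi^{p_j} \le Ca\Psi^{p_{j-1}}$, so that the $|\pa_t w_k|^2$-part of the new contribution at level $j$ is absorbed by a small fraction of the damping production at level $j-1$. Symmetrically, an analog of Lemma \ref{lem.e0} for $E_0^{(k,j-1)}[w_k]$, built on the supersolution $\Phi_{\beta_{j-1},\ep}$ with $\beta_{j-1}:=p_j/(1-2\delta)$ --- invoking Lemmas \ref{lem.deltaphi}, \ref{lem:super-sol} and the Hardy inequality (Lemma \ref{lem.hardy}) --- furnishes the negative gradient production $-\eta(t_0+t)^{j-1}\int|\nabla w_k|^2 \Psi^{p_j}\,dx$, which absorbs the $|\nabla w_k|^2$-part of the new contribution at level $j$.

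Summing the resulting identities over $j$ with a nested cascade of small positive weights $\nu_{k,0},\ldots,\nu_{k,2k+1}$ --- each chosen strictly smaller than its predecessor so that every level's error is dominated by the adjacent level's production --- produces a Gronwall-type differential inequality whose integration yields the stated bound. The initial-energy control $\sum_j \nu_{k,j} E^{(k,j)}[w_k](0;t_0) \le C I_k$ follows at once from Definition \ref{def.ek} evaluated at $t=0$ and the identifications $w_k|_{t=0}=u_k$, $\pa_t w_k|_{t=0}=u_{k+1}$; the lower-$\ell$ contributions to $I_k$ enter only through the compact-support cutoff approximation that legitimizes the formal integration by parts (used throughout Section \ref{sec:damped}). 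The principal technical hurdle is this cascade bookkeeping together with the uniform requirement $\beta_{j-1}<\gamma_\ep$ across all relevant $j$, which forces $\ep$ to be taken smaller (and $t_*$ correspondingly larger) as $k$ grows. The one-dimensional case $N=1$ is accommodated exactly as in the preceding subsection, by substituting the modified supersolution $\widetilde\Phi_{\beta,\ep}$ for $\Phi_{\beta,\ep}$ to compensate for the absence of the Hardy inequality.
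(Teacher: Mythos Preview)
Your proposal contains a genuine gap at the heart of the inductive step. You attempt to close each level $j$ by running an analogue of Lemma~\ref{lem.e0} on $E_0^{(k,j-1)}[w_k]$ built from the Kummer supersolution $\Phi_{\beta_{j-1},\ep}$ with $\beta_{j-1}=p_j/(1-2\delta)$, and you claim the constraint $\beta_{j-1}<\gamma_\ep$ can always be met by taking $\ep$ smaller. This is false: from \eqref{gammatilde} one has $\gamma_\ep\uparrow \frac{N-\alpha}{2-\alpha}$ as $\ep\downarrow 0$, so $\gamma_\ep$ is bounded independently of $\ep$. Since $p_1=\lambda+2k\frac{\alpha}{2-\alpha}$ grows without bound in $k$ (for $\alpha>0$), the requirement $\beta_0<\gamma_\ep$ fails for all sufficiently large $k$, and in fact already for $k=1$ when $\alpha$ is close to $1$ and $N$ is small. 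Once $\beta_{j-1}\ge\gamma_\ep$, the function $\varphi_{\beta_{j-1},\ep}$ loses positivity and the sign conditions of Lemma~\ref{lem_varphi}(iv), so neither Lemma~\ref{lem.deltaphi} nor Lemma~\ref{lem:super-sol} applies, and the entire mechanism collapses.

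The paper avoids this obstruction by \emph{not} using Kummer supersolutions for $k\ge 1$. The higher-order functional $E_0^{(k,j)}$ is built directly from the power $\Psi^{\lambda+(2k-j)\frac{\alpha}{2-\alpha}}$ (see \eqref{e0k}), and its differentiation produces a remainder of the form $(t_0+\tau)^{j-1}\int_\Omega a(x)|w_k|^2\,\Psi^{\lambda+(2k-j)\frac{\alpha}{2-\alpha}}\,dx$ that is \emph{not} absorbed internally by any Hardy or supersolution argument. It is controlled instead by the space--time bound $K$ in \eqref{induction.assumption}, which is precisely the content of the induction hypothesis at level $k-1$ applied to $\partial_t^{k}u$ (the third sum in the conclusion of the theorem with $k$ replaced by $k-1$). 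Thus the lower-$\ell$ contributions to $I_k$ are not merely a regularization device as you assert; the bound $K\le C I_{k-1}$ is the essential input that makes the inductive step close. Your cascade, even in the parameter range where $\beta_{j-1}<\gamma_\ep$ happens to hold, would still have to contend with the extra terms coming from differentiating the factor $(t_0+t)^{j-1}$ in front of $E_0^{(k,j-1)}$, and you give no mechanism for absorbing those without recourse to the induction hypothesis.
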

\begin{remark}\label{rem.ek}
If we formally take $k=0$ in the above theorem,
then we have the same conclusion of Theorem \ref{thm.e1}.
In this sense we interpret that the above theorem is also valid for $k=0$.
\end{remark}

\subsubsection{Proof of Theorem \ref{thm.ek}}
We prove Theorem \ref{thm.ek} by induction.
The case $k=0$ has already done by Theorem \ref{thm.e1} (see Remark \ref{rem.ek}).
We assume that  Theorem \ref{thm.ek} is valid for $k-1$.

Next, for the induction step,
we prove the following lemma,
which shows that if a solution of the damped wave equation \eqref{dw}
has a certain space-time bound, then it decays faster than general cases.
\begin{lemma}\label{lem.e.w}
Let $k \in \mathbb{N}$.
Let $(w_0, w_1)$ satisfy the compatibility condition of order $1$
and $w$ be the corresponding solution of \eqref{dw} with the initial data $(w_0, w_1)$.
Then, there exists
$t_* \ge 1$ and $\nu_{k,j}>0$ with $j=0,1,\ldots,2k$ such that for any $t_0 \ge t_*$,
the following holds:
Assume that the initial data satisfy
\begin{align*}
	I = \int_{\Omega} (|\nabla w_0(x)|^2 + |w_1(x)|^2) \Psi(x,0; t_0)^{\lambda+(2k+1)\frac{\alpha}{2-\alpha}} \,dx
		+ \int_{\Omega} a(x) |w_0(x)|^2 \Psi(x,0;t_0)^{\lambda+2k\frac{\alpha}{2-\alpha}} \,dx < \infty
\end{align*} 
and the solution $w$ satisfies
\begin{align}
\label{induction.assumption}
	K=\sum_{j=0}^{2(k-1)+1} \int_0^{\infty} (t_0+\tau)^{j}
		\int_{\Omega} a(x) |w(x,\tau)|^2 \Psi(x,\tau;t_0)^{\lambda+(2k-1+j)\frac{\alpha}{2-\alpha}} \,dxd\tau
	<\infty.
\end{align}
Then, we have
\begin{align}
\nonumber
	&\sum_{j=0}^{2k+1} E_1^{(k,j)} [w](t;t_0)
		+ \sum_{j=0}^{2k} (t_0+t)^j \int_{\Omega} a(x) |w(x,t)|^2
				\Psi(x,t;t_0)^{\lambda + (2k-j)\frac{\alpha}{2-\alpha}} \,dx \\
\nonumber
	&\quad + \sum_{j=0}^{2k+1} \int_0^t (t_0+\tau)^{j}
			\int_{\Omega} a(x) |\partial_t w(x,\tau)|^2
				\Psi(x,\tau;t_0)^{\lambda + (2k+1-j)\frac{\alpha}{2-\alpha}} \,dxd\tau \\
\nonumber
	&\quad + \sum_{j=0}^{2k} \int_0^t (t_0+\tau)^{j}
			\int_{\Omega} |\nabla w(x,\tau)|^2
				\Psi(x,\tau;t_0)^{\lambda + (2k-j)\frac{\alpha}{2-\alpha}} \,dxd\tau \\
\label{eq.e.w}
	&\le C (I+K)
\end{align}
for
$t \ge 0$
with some constant
$C = C(k, N,\alpha,\delta,\varepsilon,\lambda,t_0,\nu_{k,0},\ldots, \nu_{k,2k})>0$.
\end{lemma}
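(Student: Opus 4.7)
I would adapt the four-step argument for Theorem \ref{thm.e1} (Lemmas \ref{lem.e1}--\ref{lem.e1.til}) to the full family $\{E_1^{(k,j)}[w], E_0^{(k,j)}[w]\}_{j}$ of Definition \ref{def.ek}. The extra time-weight $(t_0+t)^j$ promotes each energy identity to one containing a ``leak'' term at level $j-1$, so the argument becomes an induction on $j$: once the bound at level $j-1$ is in hand---with the base case $j=0$ being essentially Theorem \ref{thm.e1} applied directly to $w$---the leak feeds into a Gronwall step at level $j$.

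For each $j$, differentiating $E_1^{(k,j)}[w]$ via the wave equation and integration by parts (as in Lemma \ref{lem.e1}) reproduces the dissipation $-(t_0+t)^j \int a|\partial_t w|^2 \Psi^{\lambda+(2k+1-j)\frac{\alpha}{2-\alpha}}\,dx$ plus a bad spatial-gradient term $C(t_0+t)^j \int |\nabla w|^2 \Psi^{\lambda+(2k+1-j)\frac{\alpha}{2-\alpha}-1}\,dx$, together with the extra contribution $j(t_0+t)^{j-1}\int (|\nabla w|^2 + |\partial_t w|^2) \Psi^{\lambda+(2k+1-j)\frac{\alpha}{2-\alpha}}\,dx$; since $\Psi \geq 1$, this last piece is controlled by the integrand of $E_1^{(k,j-1)}[w]$. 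Likewise, $\partial_t E_0^{(k,j)}[w]$ computed as in Lemma \ref{lem.e0} yields a useful gradient $-c(t_0+t)^j\int |\nabla w|^2 \Psi^{\lambda+(2k-j)\frac{\alpha}{2-\alpha}}\,dx$, a $|\partial_t w|^2$ remainder absorbed by the $E_1$-dissipation, and a cross term $\int|w||\partial_t w|\Psi^{\lambda+(2k-j)\frac{\alpha}{2-\alpha}-1}\,dx$ that is treated by Schwarz, Hardy's inequality (Lemma \ref{lem.hardy}), and the decay $(a\Psi)^{-1} \leq C(t_0+t)^{-\frac{2(1-\alpha)}{2-\alpha}}$ exactly as in Lemma \ref{lem.e0}. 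Forming $E^{(k,j)}[w] := E_1^{(k,j)}[w] + \nu_{k,j} E_0^{(k,j)}[w]$ with $\nu_{k,j}$ small and $t_0$ large lets the good $E_0$-gradient absorb the bad $E_1$-gradient, via the pointwise bound $\Psi^{\frac{\alpha}{2-\alpha}-1} = \Psi^{-\frac{2(1-\alpha)}{2-\alpha}} \leq (t_0+t)^{-\frac{2(1-\alpha)}{2-\alpha}}$ (valid because $\alpha<1$ and $\Psi \geq t_0+t$).

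Integrating in $t$ and invoking the already-secured level-$(j-1)$ bound closes the induction. The hypothesis $K$ enters precisely to control the time-integrated $|w|^2$ cross terms arising at the upper levels, where a direct index check shows that the weights $\Psi^{\lambda+(2k-1+j')\frac{\alpha}{2-\alpha}}$ with $j' \in \{0,\ldots,2k-1\}$ used in the definition of $K$ match the residual weights produced by the calculation. \textbf{Main obstacle.} The substantive difficulty is organizational rather than conceptual: tracking the right constellation of constants $\nu_{k,j}$, $t_0$, and Hardy-inequality constant ranges through all $2k+2$ levels, and verifying that the $|w|^2$-type residuals not absorbed by Hardy plus dissipation match exactly those covered by $K$. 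The index bookkeeping---in particular, aligning the three sliding weight scales $(t_0+t)^j$, $\Psi^{\lambda+(2k+1-j)\frac{\alpha}{2-\alpha}}$, and $\Psi^{\lambda+(2k-j)\frac{\alpha}{2-\alpha}}$ so that the chain of absorptions closes---is the crux.
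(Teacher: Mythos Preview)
Your overall architecture---induction on $j$ from $0$ through $2k+1$, forming $E^{(k,j)}=E_1^{(k,j)}+\nu_{k,j}E_0^{(k,j)}$, and using the level-$(j-1)$ space-time bound to close level $j$---matches the paper's three-step proof (Lemmas \ref{lem.ek.j0.1}--\ref{lem.ek.2k+1.1}) exactly. The one substantive deviation is in how you treat $\partial_t E_0^{(k,j)}$. You propose to compute ``as in Lemma \ref{lem.e0}'' and invoke Hardy (Lemma \ref{lem.hardy}) for the $|w||\partial_t w|$ cross term. But Lemma \ref{lem.e0} is tied to the supersolution weight $\Phi_{\beta,\varepsilon}^{-1+2\delta}$ and Lemma \ref{lem.deltaphi}, whereas $E_0^{(k,j)}$ in Definition \ref{def.ek} carries the plain weight $\Psi^{\lambda+(2k-j)\frac{\alpha}{2-\alpha}}$; neither the supersolution inequality nor Lemma \ref{lem.deltaphi} is available here, and mimicking Lemma \ref{lem.e0} literally does not go through. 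The paper instead does something simpler: a direct integration by parts on $\int w\Delta w\,\Psi^{\mu}\,dx$ together with the Schwarz splitting $|w(\nabla w\cdot\nabla\Psi)|\le \eta|\nabla w|^2\frac{|\nabla\Psi|^2}{a\Psi}+Ca|w|^2\Psi$ and $|w||\partial_t w|\le a|w|^2+C|\partial_t w|^2\Psi^{\frac{\alpha}{2-\alpha}}$ produces, after integration in time, residuals of the form $\int_0^t(t_0+\tau)^{j-1}\int a|w|^2\Psi^{\lambda+(2k-j)\frac{\alpha}{2-\alpha}}\,dx\,d\tau$, which are \emph{exactly} the terms in $K$. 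So in the higher-order setting the a priori bound $K$ replaces the Hardy/supersolution machinery entirely; your Hardy detour is not wrong for the single $|w||\partial_t w|$ term you isolate, but it is unnecessary, and more importantly it does not by itself dispose of the $a|w|^2\Psi^{\mu-1}$ and $w(\nabla w\cdot\nabla\Psi)\Psi^{\mu-1}$ terms that also appear---those are precisely where $K$ must be spent. Once you recognize that, the bookkeeping you flag as the ``main obstacle'' becomes straightforward.
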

Since the assumption of induction ensures the condition \eqref{induction.assumption}
when $w = \partial_t^k u$,
we obtain the induction step of the proof of Theorem \ref{thm.ek} from Lemma \ref{lem.e.w}.
Therefore, it suffices to show Lemma \ref{lem.e.w}.

The proof of Lemma \ref{lem.e.w} is highly technical.
However, the principle is simple, that is, the assumptions of the space-time bound \eqref{induction.assumption}
and the bound of a certain weighted energy of initial data
produce faster energy decay estimates of solutions.
Actually, in the first step, by using \eqref{induction.assumption}, we give an estimate of
$E^{(k,0)}[w](t;t_0)$.
As a byproduct, we can obtain the boundedness of the third term of \eqref{eq.e.w} for $j=0$.
Using it, in the second step, we give estimates of
$E^{(k,j)}[w](t;t_0)$ for $j=1,\ldots,2k$ in order,
and similarly, we have the boundedness of the third term of \eqref{eq.e.w} for $j=1,\ldots,2k$
as outgrowths.
Finally, by using the $2k$-th one, in the step 3, we give an estimate of $E_1^{(k,2k+1)}[w](t;t_0)$.

\begin{proof}[Proof of Lemma \ref{lem.e.w}]
We divide the proof into the following three steps.
\begin{description}
\item[\quad Step 1.] An estimate for $E^{(k,0)}[w](t;t_0,\nu_{k,0})$;
\item[\quad Step 2.] Estimates of $E^{(k,j)}[w](t;t_0,\nu_{k,j})$ for $j=1,\ldots,2k$;
\item[\quad Step 3.] An estimate for $E_1^{(k,2k+1)}[w](t;t_0)$.
\end{description}

\subsection*{Step 1: An estimate for $E^{(k,0)}[w](t;t_0,\nu_{k,0})$}
\begin{lemma}\label{lem.ek.j0.1}
Under the assumption on Theorem \ref{thm.ek}, there exists a constant
$t_{0,1} \ge 1$
such that
for any
$t_0 \ge t_{0,1}$ and $t \ge 0$, we have
\begin{align*}%
	\frac{d}{dt} E_1^{(k,0)} [w](t;t_0)
		\le - \int_{\Omega} a(x) |\partial_t w(x,t)|^2
			\Psi(x,t;t_0)^{\lambda+(2k+1)\frac{\alpha}{2-\alpha}}\,dx
			+ C \int_{\Omega} |\nabla w(x,t)|^2
			\Psi(x,t;t_0)^{\lambda+(2k+1)\frac{\alpha}{2-\alpha}-1}\,dx
\end{align*}%
with some constant
$C=C(k,N,\alpha,\varepsilon,\lambda,t_0) >0$.
\end{lemma}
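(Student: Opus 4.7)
My plan is to imitate the proof of Lemma~\ref{lem.e1} with the weight exponent $\mu := \lambda + (2k+1)\frac{\alpha}{2-\alpha}$ in place of $\lambda + \frac{\alpha}{2-\alpha}$. Since $w$ satisfies the same damped wave equation \eqref{dw}, no new ingredients are needed; only the threshold for admissible $t_0$ will depend on $k$ through the magnitude of $\mu$. As in the earlier weighted energy arguments, I would first carry out the computation assuming $w$ is compactly supported in $x$, and extend by the usual cut-off approximation noted at the beginning of the section.

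First, I would differentiate $E_1^{(k,0)}[w]$ under the integral sign; substituting $\partial_t^2 w = \Delta w - a(x)\partial_t w$ in the $\partial_t w\,\partial_t^2 w$ term and integrating by parts in $\int_\Omega \nabla w\cdot \nabla \partial_t w\,\Psi^\mu\,dx$ (the boundary contribution vanishes because $w|_{\partial\Omega}=0$), I obtain
\begin{align*}
\frac{d}{dt}E_1^{(k,0)}[w]
&= -2\int_\Omega a(x)|\partial_t w|^2 \Psi^\mu\,dx
   -2\mu \int_\Omega \partial_t w\,(\nabla w\cdot \nabla \Psi)\,\Psi^{\mu-1}\,dx \\
&\quad + \mu \int_\Omega (|\nabla w|^2 + |\partial_t w|^2)\Psi^{\mu-1}\,dx,
\end{align*}
where the last term comes from $\partial_t \Psi^\mu = \mu \Psi^{\mu-1}$.

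Second, I would apply Cauchy--Schwarz to the cross term in the form
\[
\bigl|-2\mu\,\partial_t w(\nabla w\cdot \nabla \Psi)\bigr|
\le \tfrac12\,a(x)|\partial_t w|^2\Psi
+ C_\mu\,|\nabla w|^2\,\frac{|\nabla \Psi|^2}{a(x)\Psi},
\]
and invoke the uniform bound $|\nabla \Psi|^2/(a(x)\Psi) \le \frac{2-\alpha}{N-\alpha}+\varepsilon$ coming from Lemma~\ref{lem_A_ep}\,(iii) (recall $\nabla \Psi = \nabla A_\varepsilon$). After regrouping, the right-hand side is bounded by
\[
\int_\Omega |\partial_t w|^2\Bigl(-\tfrac{3}{2}\,a(x) + \mu\,\Psi^{-1}\Bigr)\Psi^\mu\,dx
+ C\int_\Omega |\nabla w|^2 \Psi^{\mu-1}\,dx.
\]

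Third, I would exploit the inequality $\Psi^{-1} \le C\,t_0^{-2(1-\alpha)/(2-\alpha)} a(x)$ (which follows from $\Psi \ge t_0$, $\Psi \ge c\langle x\rangle^{2-\alpha}$ via \eqref{A2}, and $a(x) \gtrsim \langle x\rangle^{-\alpha}$ from \eqref{ass_a}, using that $\frac{2(1-\alpha)}{2-\alpha}>0$ since $\alpha\in[0,1)$) to choose $t_{0,1}\ge 1$ large enough that $\mu\,\Psi^{-1} \le \tfrac{1}{2}\,a(x)$ for all $t_0 \ge t_{0,1}$. The $|\partial_t w|^2$ coefficient is then dominated by $-a(x)$, giving precisely the stated estimate. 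The computation is a verbatim analogue of the proof of Lemma~\ref{lem.e1} and presents no genuine obstacle; the only bookkeeping point is that $t_{0,1}$ depends on $\mu$ (hence on $k$) through the requirement $\mu\Psi^{-1}\le \tfrac{1}{2}a(x)$.
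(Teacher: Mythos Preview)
Your proposal is correct and follows exactly the paper's approach: the paper itself states that the proof of this lemma ``is completely the same as that of Lemma~\ref{lem.e1}'' and omits the details, and your write-up is precisely that verbatim analogue with the weight exponent $\mu=\lambda+(2k+1)\frac{\alpha}{2-\alpha}$. Your estimate $\Psi^{-1}\le C\,t_0^{-2(1-\alpha)/(2-\alpha)}a(x)$ is the same as the paper's $\Psi^{-1}\le C\,t_0^{-1+\alpha/(2-\alpha)}a(x)$ (the exponents coincide), so there is no discrepancy.
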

The proof is completely the same as that of Lemma \ref{lem.e1} and we omit the detail.

\begin{lemma}\label{lem.ek.j0.0}
Under the assumption on Theorem \ref{thm.ek}, there exists a constant $t_{0,2} \ge 1$ such that
for any
$t_0 \ge t_{0,2}$ and $t \ge 0$,
we have
\begin{align*}
	\frac{d}{dt} E_{0}^{(k,0)} [w](t;t_0)
		&\le
		-  \int_{\Omega} |\nabla w(x,t) |^2
			\Psi(x,t;t_0)^{\lambda+2k\frac{\alpha}{2-\alpha}}\,dx
		+ C \int_{\Omega} |\partial_t w(x,t)|^2
			\Psi(x,t;t_0)^{\lambda+2k\frac{\alpha}{2-\alpha}} \,dx \\
		&\quad + C \int_{\Omega} a(x) |w(x,t)|^2
			\Psi(x,t;t_0)^{\lambda + (2k-1) \frac{\alpha}{2-\alpha}} \,dx
\end{align*}
with some constant
$C = C(k,N,\alpha,\varepsilon,\lambda,t_0) > 0$.
\end{lemma}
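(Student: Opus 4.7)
The plan is to compute $\frac{d}{dt}E_0^{(k,0)}[w]$ directly, in analogy with Lemma \ref{lem.e0}, but exploiting that the weight here is the simple power $\Psi^\mu$ (with $\mu := \lambda + 2k\frac{\alpha}{2-\alpha}$) rather than a Kummer-based supersolution, so that Lemma \ref{lem.deltaphi} can be replaced by plain integration by parts. Differentiating under the integral, using $\partial_t\Psi = 1$ and the identity $\partial_t^2 w + a(x)\partial_t w = \Delta w$ coming from \eqref{dw}, the standard cancellation gives
\[
\frac{d}{dt}E_0^{(k,0)}[w]
= 2\int_\Omega |\partial_t w|^2 \Psi^\mu\,dx
+ 2\int_\Omega w\Delta w \,\Psi^\mu\,dx
+ 2\mu\int_\Omega w\partial_t w\,\Psi^{\mu-1}\,dx
+ \mu\int_\Omega a(x)w^2\Psi^{\mu-1}\,dx.
\]

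The key step is integration by parts on $\int_\Omega w\Delta w\,\Psi^\mu\,dx$; the boundary term vanishes since $w=0$ on $\partial\Omega$, producing the negative $-2\int|\nabla w|^2\Psi^\mu\,dx$ together with $C\int w^2\big[\Psi^{\mu-2}|\nabla\Psi|^2 + \Psi^{\mu-1}\Delta\Psi\big]\,dx$. Invoking Lemma \ref{lem_A_ep}, $\Delta\Psi = \Delta A_\varepsilon \le (1+\varepsilon)a(x)$ and $|\nabla\Psi|^2 \le Ca(x)\Psi$ via \eqref{A3}, so these pieces are absorbed into $C\int a(x)w^2\Psi^{\mu-1}\,dx$. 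The remaining cross term is handled by Young's inequality, $2\mu\int w\partial_t w\,\Psi^{\mu-1}\,dx \le \int|\partial_t w|^2\Psi^\mu\,dx + C\int w^2\Psi^{\mu-2}\,dx$.

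To eliminate the lone $L^2$-type term $\int w^2\Psi^{\mu-2}\,dx$ without an $a(x)$ factor, I would use the pointwise bound $\Psi^{-1} \le Ct_0^{-2(1-\alpha)/(2-\alpha)}\,a(x)$, which follows from $t_0 + A_\varepsilon(x) \ge c\,t_0^{2(1-\alpha)/(2-\alpha)}A_\varepsilon(x)^{\alpha/(2-\alpha)}$ (Young's inequality on the two summands) together with $A_\varepsilon(x)^{-\alpha/(2-\alpha)} \le Ca(x)$ (Lemma \ref{lem_A_ep} combined with \eqref{ass_a}). For $t_0 \ge t_{0,2}$ sufficiently large this renders $\int w^2\Psi^{\mu-2}\,dx$ smaller than any prescribed multiple of $\int a(x)w^2\Psi^{\mu-1}\,dx$ and allows it to be absorbed. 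Finally, since $\alpha\in[0,1)$ forces $\frac{\alpha}{2-\alpha}\le1$ and $\Psi\ge1$, the relaxation $\Psi^{\mu-1}\le\Psi^{\mu-\alpha/(2-\alpha)}$ matches the right-hand side of the statement.

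The main obstacle is simply the exponent bookkeeping: every $w^2$ error term produced by the two integrations by parts and by Young's inequality must be consolidated into the single target term $C\int a(x)w^2\Psi^{\mu-\alpha/(2-\alpha)}\,dx$. The assumption $\alpha<1$ enters twice in this — first, to ensure the exponent $-2(1-\alpha)/(2-\alpha)$ is strictly negative so that the $t_0$-smallness trick works; second, to permit the final relaxation of $\Psi^{\mu-1}$ to $\Psi^{\mu-\alpha/(2-\alpha)}$ against the $\Psi\ge1$ factor.
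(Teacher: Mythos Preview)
Your proposal is correct and follows essentially the same approach as the paper. The only minor technical difference is that you integrate by parts twice on $\int_\Omega w\Delta w\,\Psi^\mu\,dx$ (moving $\Delta$ entirely onto $\Psi^\mu$ to obtain $\int_\Omega w^2\Delta(\Psi^\mu)\,dx$), whereas the paper integrates by parts once and then handles the resulting cross term $w(\nabla w\cdot\nabla\Psi)\Psi^{\mu-1}$ via the Schwarz inequality with a small parameter; both routes reduce to the same error term $C\int_\Omega a|w|^2\Psi^{\mu-1}\,dx$, after which the relaxation $\Psi^{\mu-1}\le\Psi^{\mu-\alpha/(2-\alpha)}$ finishes as you describe.
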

\begin{proof}
By integration by parts,
we have
\begin{align*}
	\frac{d}{dt} E_{0}^{(k,0)} [w](t;t_0)
		&= 2 \int_{\Omega} (|\partial_t w|^2 + w \Delta w )
			\Psi^{\lambda + 2k\frac{\alpha}{2-\alpha}} \,dx \\
		&\quad + \left( \lambda + 2k\frac{\alpha}{2-\alpha} \right)
			\int_{\Omega} (2 w \partial_t w + a(x) |w|^2)
			\Psi^{\lambda + 2k\frac{\alpha}{2-\alpha}-1} \,dx \\
		&= 2 \int_{\Omega} |\partial_t w|^2
			\Psi^{\lambda + 2k\frac{\alpha}{2-\alpha}} \,dx
			-2 \int_{\Omega} |\nabla w|^2
			\Psi^{\lambda + 2k\frac{\alpha}{2-\alpha}} \,dx \\
		&\quad - 2 \left( \lambda +2k\frac{\alpha}{2-\alpha} \right)
			\int_{\Omega} w (\nabla w \cdot \nabla \Psi)
				\Psi^{\lambda + 2k\frac{\alpha}{2-\alpha}-1} \,dx \\
		&\quad + \left( \lambda + 2k\frac{\alpha}{2-\alpha} \right)
			\int_{\Omega} (2 w \partial_t w + a(x) |w|^2)
			\Psi^{\lambda + 2k\frac{\alpha}{2-\alpha}-1} \,dx \\
		&\le C \int_{\Omega} |\partial_t w|^2
			\Psi^{\lambda + 2k\frac{\alpha}{2-\alpha}} \,dx
			-  \int_{\Omega} |\nabla w|^2
			\Psi^{\lambda + 2k\frac{\alpha}{2-\alpha}} \,dx \\
		&\quad + C \int_{\Omega} a(x) |w|^2
			\Psi^{\lambda + (2k-1)\frac{\alpha}{2-\alpha}} \,dx
\end{align*}
for sufficiently large
$t_0$.
Here we have used the Schwarz inequality
\begin{align*}
	\left| w (\nabla w \cdot \nabla \Psi) \right|
	&\le \eta |\nabla w|^2 \frac{|\nabla \Psi|^2}{a(x)\Psi}
		+ C a(x) |w|^2 \Psi
\end{align*}
with small $\eta>0$
and \eqref{psi.ratio}.
This gives the desired estimate.
\end{proof}

\begin{lemma}\label{lem.ek.j0}
Under the assumptions on Lemma \ref{lem.e.w},
there exist constants
$t_{0,*} \ge \max\{ t_{0,1}, t_{0,2} \}$
and
$\nu_{k,0} = \nu_{k,0} (k,N,\alpha,\varepsilon,\lambda,t_0) > 0$
such that
for any
$t_0 \ge t_{0,*}$,
\begin{align*}
	E^{(k,0)} [w] (t;t_0) &\ge
		c \int_{\Omega}
			(|\nabla w(x,t)|^2 + |\partial_t w(x,t)|^2)
				\Psi(x,t;t_0)^{\lambda+ (2k+1)\frac{\alpha}{2-\alpha}} \,dx \\
		&\quad + c \int_{\Omega} a(x) | w(x,t)|^2
				\Psi(x,t;t_0)^{\lambda+2k\frac{\alpha}{2-\alpha}} \,dx
\end{align*}
and
\begin{align*}
	&E^{(k,0)} [w] (t;t_0)
		+ \int_0^t \int_{\Omega}
			|\nabla w(x,\tau)|^2
				\Psi(x,\tau;t_0)^{\lambda+2k\frac{\alpha}{2-\alpha}} \,dxd\tau \\
	&\quad + \int_0^t \int_{\Omega}
			a(x) |\partial_t w(x,\tau)|^2
				\Psi(x,\tau;t_0)^{\lambda+(2k+1)\frac{\alpha}{2-\alpha}}\,dxd\tau \\
	&\le C (I+K)
\end{align*}
hold for
$t\ge 0$
with some constants
$c = c(k,N,\alpha,\varepsilon,\lambda,t_0,\nu_{k,0}) > 0$
and
$C = C(k,N,\alpha,\varepsilon,\lambda,t_0,\nu_{k,0}) > 0$.
\end{lemma}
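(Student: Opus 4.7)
The strategy mirrors the combination used in Lemma \ref{lem.e}, now applied at the weight level $\lambda+2k\frac{\alpha}{2-\alpha}$ (resp.\ $\lambda+(2k+1)\frac{\alpha}{2-\alpha}$) and augmented by the space–time bound \eqref{induction.assumption} to absorb a lower-order weighted $L^2$ term in $w$. First I would establish the two-sided coercivity of $E^{(k,0)}[w]$. Applying Schwarz in the cross term $2w\partial_t w$ of $E_0^{(k,0)}$,
\[
\nu_{k,0}\bigl|2w\partial_tw\bigr|\Psi^{\lambda+2k\frac{\alpha}{2-\alpha}}
\le \nu_{k,0}\eta\,|\partial_t w|^2\Psi^{\lambda+(2k+1)\frac{\alpha}{2-\alpha}}
+ \nu_{k,0}\eta^{-1}|w|^2\Psi^{\lambda+(2k-1)\frac{\alpha}{2-\alpha}},
\]
where the inequality $\Psi^{-\frac{\alpha}{2-\alpha}} \le C\,a(x)$ (which follows from \eqref{ass_a} and $\Psi\ge A_\varepsilon(x)\sim \langle x\rangle^{2-\alpha}$) converts the last factor into $Ca(x)\Psi^{\lambda+2k\frac{\alpha}{2-\alpha}}$. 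Taking $\eta$ large to beat the constant and then $\nu_{k,0}$ small so that $\nu_{k,0}\eta\le 1/2$, one obtains the claimed lower bound on $E^{(k,0)}[w]$.

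Next, differentiate $E^{(k,0)}[w](t;t_0)=E_1^{(k,0)}[w]+\nu_{k,0}E_0^{(k,0)}[w]$ and plug in Lemmas \ref{lem.ek.j0.1} and \ref{lem.ek.j0.0}. The resulting estimate contains
\[
-\int_\Omega a(x)|\partial_t w|^2\Psi^{\lambda+(2k+1)\frac{\alpha}{2-\alpha}}dx
-\nu_{k,0}\!\int_\Omega |\nabla w|^2\Psi^{\lambda+2k\frac{\alpha}{2-\alpha}}dx
\]
as the two good terms, plus three potentially bad terms: $C\int |\nabla w|^2\Psi^{\lambda+(2k+1)\frac{\alpha}{2-\alpha}-1}dx$ from Lemma \ref{lem.ek.j0.1}, $C\nu_{k,0}\int|\partial_t w|^2\Psi^{\lambda+2k\frac{\alpha}{2-\alpha}}dx$ from Lemma \ref{lem.ek.j0.0}, and $C\nu_{k,0}\int a(x)|w|^2\Psi^{\lambda+(2k-1)\frac{\alpha}{2-\alpha}}dx$ also from Lemma \ref{lem.ek.j0.0}. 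The first is absorbed by the $|\nabla w|^2$-good term using the gain $\Psi^{-1+\frac{\alpha}{2-\alpha}}\le Ct_0^{-\frac{2(1-\alpha)}{2-\alpha}}$, provided $t_0$ is chosen large. The second is absorbed by the $a(x)|\partial_t w|^2$-good term using again $\Psi^{-\frac{\alpha}{2-\alpha}}\le C a(x)$ together with the smallness of $\nu_{k,0}$.

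The third term is exactly the one that is \emph{not} absorbable at the pointwise level, and this is the step where the induction hypothesis \eqref{induction.assumption} enters: its $j=0$ summand reads $\int_0^\infty\!\int_\Omega a(x)|w|^2\Psi^{\lambda+(2k-1)\frac{\alpha}{2-\alpha}}dxd\tau\le K$. After the absorptions above we arrive at a differential inequality of the form
\[
\frac{d}{dt}E^{(k,0)}[w](t;t_0)
+c_0\!\int_\Omega\!\Bigl(|\nabla w|^2\Psi^{\lambda+2k\frac{\alpha}{2-\alpha}}
+a(x)|\partial_t w|^2\Psi^{\lambda+(2k+1)\frac{\alpha}{2-\alpha}}\Bigr)dx
\le C\!\int_\Omega a(x)|w|^2\Psi^{\lambda+(2k-1)\frac{\alpha}{2-\alpha}}dx,
\]
with $c_0>0$. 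Integrating on $[0,t]$ and using $E^{(k,0)}[w](0;t_0)\le CI$ together with the just-mentioned $j=0$ summand of $K$ yields the two displayed conclusions.

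The main obstacle is orchestrating the order of parameter choices: $\eta$ must be fixed first (large, depending only on structural constants), then $t_0$ is taken large enough to make the $|\nabla w|^2$ gain and the coercivity gain $\Psi^{-\frac{\alpha}{2-\alpha}}\le Ca(x)$ quantitative, and only then $\nu_{k,0}$ is chosen small enough to carry out both the Schwarz absorption in the coercivity estimate and the $|\partial_t w|^2$ absorption in the time-derivative estimate. Identifying that the lower-order remainder $a(x)|w|^2\Psi^{\lambda+(2k-1)\frac{\alpha}{2-\alpha}}$ is precisely the $j=0$ term of the hypothesis \eqref{induction.assumption} — so that no loss of decay rate occurs — is the key conceptual point that justifies passing from the $(k-1)$-th step of the induction in the proof of Theorem \ref{thm.ek} to the $k$-th one.
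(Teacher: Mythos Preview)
Your argument is correct and follows the same approach as the paper's proof: combine Lemmas \ref{lem.ek.j0.1} and \ref{lem.ek.j0.0}, absorb the $|\nabla w|^2$ and $|\partial_t w|^2$ remainders via smallness of $t_0^{-\frac{2(1-\alpha)}{2-\alpha}}$ and $\nu_{k,0}$ respectively, and control the leftover $\int_0^t\!\int_\Omega a(x)|w|^2\Psi^{\lambda+(2k-1)\frac{\alpha}{2-\alpha}}\,dx\,d\tau$ by the $j=0$ summand of the hypothesis \eqref{induction.assumption}. One minor point of care: in your parameter discussion you say that $t_0$ is fixed first and ``only then $\nu_{k,0}$ is chosen small enough'', but the $|\nabla w|^2$ absorption actually needs $Ct_0^{-\frac{2(1-\alpha)}{2-\alpha}}\le \tfrac12\nu_{k,0}$, so $\nu_{k,0}$ cannot be taken arbitrarily small once $t_0$ is fixed --- it must lie in an interval $[2Ct_0^{-\frac{2(1-\alpha)}{2-\alpha}},\nu_{\max}]$, which is non-empty provided $t_{0,*}$ is chosen large enough. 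This is only a phrasing issue, not a gap.
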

\begin{proof}
The first assertion is obvious by taking
$\nu_{k,0}$
sufficiently small.
For the second assertion,
retaking $\nu_{k,0}$ smaller if needed,
using Lemmas \ref{lem.ek.j0.1} and \ref{lem.ek.j0.0}
and taking $t_{0,*} \ge \max\{ t_{0,1}, t_{0,2} \}$
sufficiently large,
we have
\begin{align*}
	&E^{(k,0)} [w](t;t_0)
		+ \int_0^t \int_{\Omega}
			|\nabla w(x,\tau)|^2
				\Psi(x,\tau;t_0)^{\lambda+2k\frac{\alpha}{2-\alpha}} \,dxd\tau \\
	&\quad + \int_0^t \int_{\Omega}
			a(x) |\partial_t w(x,\tau)|^2
				\Psi(x,\tau;t_0)^{\lambda+(2k+1)\frac{\alpha}{2-\alpha}}\,dxd\tau \\
	&\le C E^{(k,0)} [w] (0;t_0)
		+  C \int_0^t \int_{\Omega} a(x) |w(x,\tau)|^2
			\Psi(x,\tau;t_0)^{\lambda + (2k-1) \frac{\alpha}{2-\alpha}} \,dxd\tau.
\end{align*}
The last term is bounded by
$C(I+K)$
thanks to the assumption of Lemma \ref{lem.e.w}.
This leads to the conclusion.
\end{proof}

\subsection*{Step 2: Estimates of $E^{(k,j)}[w](t;t_0,\nu_{k,j})$ for $j=1,\ldots,2k$}

Next, we estimate $E^{(k,j)}[w](t;t_0,\nu_{k,j})$ for $j=1,\ldots,2k$ in order.
The key point is
to apply the boundedness of
\begin{align*}
	\int_0^t (t_0+\tau)^{j-1} \int_{\Omega} a(x) 
	\Big(|\nabla w(x,\tau)|^2+|\pa_tx(x,\tau)|^2\Big)
			\Psi(x,\tau;t_0)^{\lambda + (2k+1-j) \frac{\alpha}{2-\alpha}} \,dx \,d\tau
\end{align*}
obtained in the $(j-1)$-th step to the estimate in the $j$-th step
(see the proof of Lemma \ref{lem.ek.j}).

\begin{lemma}\label{lem.ek.j.1}
Under the assumption on Lemma \ref{lem.e.w}, for
$j=1,\ldots,2k$,
there exists a constant
$t_{j,1} \ge 1$
such that
for any
$t_0 \ge t_{j,1}$ and $t \ge 0$, we have
\begin{align*}%
	\frac{d}{dt} E_1^{(k,j)} [w] (t;t_0)
		&\le - (t_0+t)^j \int_{\Omega} a(x) |\partial_t w(x,t)|^2
			\Psi(x,t;t_0)^{\lambda+(2k+1-j)\frac{\alpha}{2-\alpha}}\,dx \\
		&\quad + C (t_0+t)^{j-1} \int_{\Omega} (|\nabla w(x,t)|^2 + |\partial_t w(x,t)|^2 )
			\Psi(x,t;t_0)^{\lambda+(2k+1-j)\frac{\alpha}{2-\alpha}}\,dx
\end{align*}%
with some constant
$C=C(k,j,N,\alpha,\varepsilon,\lambda,t_0) >0$.
\end{lemma}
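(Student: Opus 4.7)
The plan is to imitate the proof of Lemma \ref{lem.e1} with the extra time-weight $(t_0+t)^j$ built in. Write $\beta_j := \lambda + (2k+1-j)\frac{\alpha}{2-\alpha}$, so $E_1^{(k,j)}[w](t;t_0) = (t_0+t)^j\int_\Omega(|\nabla w|^2 + |\partial_t w|^2)\Psi^{\beta_j}\,dx$. Since $\partial_t\Psi = 1$, direct differentiation gives
\[
\frac{d}{dt}E_1^{(k,j)}[w] = j(t_0+t)^{j-1}\!\!\int_\Omega(|\nabla w|^2+|\partial_t w|^2)\Psi^{\beta_j}\,dx + \beta_j(t_0+t)^j\!\!\int_\Omega(|\nabla w|^2+|\partial_t w|^2)\Psi^{\beta_j-1}\,dx + 2(t_0+t)^j\!\!\int_\Omega\bigl(\nabla w\cdot\nabla\partial_t w + \partial_t w\,\partial_t^2 w\bigr)\Psi^{\beta_j}\,dx.
\]
Integrate the first piece of the last integral by parts, substitute $\partial_t^2 w - \Delta w + a(x)\partial_t w = 0$, and the last bracket reduces to $-2(t_0+t)^j\!\int a(x)|\partial_t w|^2\Psi^{\beta_j}\,dx - 2\beta_j(t_0+t)^j\!\int \partial_t w(\nabla w\cdot\nabla\Psi)\Psi^{\beta_j-1}\,dx$.

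The next step is to tame the cross-term via a weighted Young inequality, writing $|\partial_t w|\sqrt{a(x)}\Psi^{\beta_j/2}$ against $|\nabla w||\nabla\Psi|/\sqrt{a(x)}\cdot\Psi^{\beta_j/2-1}$. Using \eqref{A3} in the form $|\nabla\Psi|^2/(a(x)\Psi)\le C$ (note that $\nabla\Psi = \nabla A_\ep$), the cross-term is bounded by
\[
(t_0+t)^j\!\!\int_\Omega a(x)|\partial_t w|^2\Psi^{\beta_j}\,dx + C(t_0+t)^j\!\!\int_\Omega |\nabla w|^2\Psi^{\beta_j-1}\,dx,
\]
and the first piece here is absorbed into half of $-2(t_0+t)^j\!\int a(x)|\partial_t w|^2\Psi^{\beta_j}\,dx$, leaving exactly $-(t_0+t)^j\!\int a(x)|\partial_t w|^2\Psi^{\beta_j}\,dx$, which is the dissipative term on the right-hand side of the lemma.

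Finally, to convert the remaining $\Psi^{\beta_j-1}$ integrals into the claimed $(t_0+t)^{j-1}\Psi^{\beta_j}$ form, I use the trivial pointwise estimate $\Psi^{-1} = (t_0 + t + A_\varepsilon(x))^{-1}\le (t_0+t)^{-1}$ on both the $\beta_j(t_0+t)^j\!\int(|\nabla w|^2+|\partial_t w|^2)\Psi^{\beta_j-1}\,dx$ term and the leftover cross-term remainder. Combined with the already-present $j(t_0+t)^{j-1}$ contribution from differentiating $(t_0+t)^j$, this produces the $C(t_0+t)^{j-1}\!\int(|\nabla w|^2 + |\partial_t w|^2)\Psi^{\beta_j}\,dx$ term on the right-hand side. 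The choice $t_{j,1}\ge 1$ only needs to guarantee that $A_\varepsilon$ is well-defined and $\Psi\ge 1$, so no genuine largeness of $t_0$ is required beyond what was already fixed in Lemma \ref{lem_A_ep}.

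The argument is essentially a routine adaptation of Lemma \ref{lem.e1}, and the only bookkeeping subtlety is making sure that the $|\partial_t w|^2$-coefficient survives with a negative sign after the Young-type absorption: this is why one picks the weighting $\eta=1$ in the cross-term estimate rather than a small $\eta$. There is no conceptual obstacle; the statement intentionally keeps both $|\nabla w|^2$ and $|\partial_t w|^2$ on the right-hand side at the lower level $(t_0+t)^{j-1}$, which is exactly what the $\Psi^{-1}\le(t_0+t)^{-1}$ reduction produces.
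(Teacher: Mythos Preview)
Your argument is correct and essentially identical to the paper's proof: both differentiate $E_1^{(k,j)}$, integrate by parts, use the equation to produce $-2(t_0+t)^j\int a|\partial_t w|^2\Psi^{\beta_j}$ plus a cross term, control the cross term via Young's inequality together with \eqref{psi.ratio}, and then reduce all $\Psi^{\beta_j-1}$ contributions to $(t_0+t)^{-1}\Psi^{\beta_j}$. The only cosmetic difference is that the paper splits the cross term with a small parameter $\eta$ (so that $2\beta_j\eta\le 1$) rather than your choice $\eta=1$; your observation that no genuine largeness of $t_0$ is needed in this particular lemma is also accurate.
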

\begin{proof}
By integration by parts,
we have
\begin{align*}
	\frac{d}{dt} E_1^{(k,j)} [w] (t;t_0)
	&= 2(t_0+t)^j \int_{\Omega}
		(\nabla \partial_t w \cdot \nabla w + \pa_tw \partial_t^2 w )
		\Psi^{\lambda + (2k+1-j)\frac{\alpha}{2-\alpha}} \,dx \\
	&\quad + (t_0+t)^j \int_{\Omega} (|\nabla w|^2 + |\partial_t w|^2)
		\left[ j(t_0+t)^{-1} + \left(\lambda +(2k+1-j)\frac{\alpha}{2-\alpha} \right) \Psi^{-1} \right]
				\Psi^{\lambda + (2k+1-j)\frac{\alpha}{2-\alpha}} \,dx \\
	&\le
		-2 (t_0+t)^j \int_{\Omega} a(x) |\partial_t w|^2
			\Psi^{\lambda + (2k+1-j)\frac{\alpha}{2-\alpha}} \,dx \\
	&\quad - 2 (t_0+t)^j \left(\lambda +(2k+1-j)\frac{\alpha}{2-\alpha} \right) 
			\int_{\Omega} \partial_t w (\nabla w \cdot \nabla \Psi )
				\Psi^{\lambda + (2k+1-j)\frac{\alpha}{2-\alpha}-1} \,dx \\
	&\quad + C(t_0+t)^{j-1} \int_{\Omega} (|\nabla w|^2 + |\partial_t w|^2)
			\Psi^{\lambda + (2k+1-j)\frac{\alpha}{2-\alpha}} \,dx \\
	&\le - (t_0+t)^j \int_{\Omega} a(x) |\partial_t w|^2
			\Psi^{\lambda + (2k+1-j)\frac{\alpha}{2-\alpha}} \,dx \\
	&\quad + C(t_0+t)^{j-1} \int_{\Omega} (|\nabla w|^2 + |\partial_t w|^2)
			\Psi^{\lambda + (2k+1-j)\frac{\alpha}{2-\alpha}} \,dx
\end{align*} 
for sufficiently large
$t_0$.
Here we have used the Schwarz inequality
\begin{align*}
	\left| \partial_t w (\nabla w \cdot \nabla \Psi ) \right|
	&\le \eta a(x) |\partial_t w|^2 \Psi
		+ C |\nabla w|^2 \frac{|\nabla \Psi|^2}{a(x) \Psi}
\end{align*} 
with small $\eta$ and \eqref{psi.ratio}.
This completes the proof.
\end{proof}

\begin{lemma}\label{lem.ek.j.0}
Under the assumption on Lemma \ref{lem.e.w}, for
$j=1,\ldots,2k$,
there exists a constant $t_{j,2} \ge 1$ such that
for any
$t_0 \ge t_{j,2}$ and $t \ge 0$,
we have
\begin{align*}
	\frac{d}{dt} E_{0}^{(k,j)} [w](t;t_0)
		&\le
		-  (t_0+t)^j \int_{\Omega} |\nabla w(x,t) |^2
			\Psi(x,t;t_0)^{\lambda+(2k-j)\frac{\alpha}{2-\alpha}}\,dx \\
		&\quad + C (t_0+t)^j \int_{\Omega} |\partial_t w(x,t)|^2
			\Psi(x,t;t_0)^{\lambda+(2k-j)\frac{\alpha}{2-\alpha}} \,dx \\
		&\quad + C (t_0+t)^{j-1} \int_{\Omega} a(x) |w(x,t)|^2
			\Psi(x,t;t_0)^{\lambda + (2k-j) \frac{\alpha}{2-\alpha}} \,dx \\
		&\quad + C (t_0+t)^{j-1} \int_{\Omega} |\partial_t w|^2
			\Psi(x,t;t_0)^{\lambda + (2k+1-j) \frac{\alpha}{2-\alpha}} \,dx
\end{align*}
with some constant
$C = C(k,j,N,\alpha,\varepsilon,\lambda,t_0) > 0$.
\end{lemma}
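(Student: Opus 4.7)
The plan is to differentiate $E_0^{(k,j)}[w](t;t_0)$ by the product rule, separating the contribution from $(t_0+t)^j$ from the contribution of the integral, and to reduce everything to the pattern already established in Lemma \ref{lem.ek.j0.0} plus two extra error terms that match the last two terms in the conclusion. Throughout I abbreviate $\phi := \Psi(x,t;t_0)^{\lambda+(2k-j)\frac{\alpha}{2-\alpha}}$.

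First I would write
\[
\frac{d}{dt}E_0^{(k,j)}[w](t;t_0)
= j(t_0+t)^{j-1}\!\int_{\Omega}\!\bigl(2w\partial_t w+a(x)|w|^2\bigr)\phi\,dx
+ (t_0+t)^j\frac{d}{dt}\!\int_{\Omega}\!\bigl(2w\partial_t w+a(x)|w|^2\bigr)\phi\,dx.
\]
For the inner time derivative, I would use $\partial_t^2 w=\Delta w-a(x)\partial_t w$ so that the $aw\partial_t w$ pieces cancel, leaving
\[
\frac{d}{dt}\!\int_{\Omega}\!\bigl(2w\partial_tw+a|w|^2\bigr)\phi\,dx
=2\!\int_{\Omega}\!|\partial_tw|^2\phi\,dx+2\!\int_{\Omega}\!w\Delta w\,\phi\,dx
+\!\int_{\Omega}\!\bigl(2w\partial_tw+a|w|^2\bigr)\partial_t\phi\,dx,
\]
and then integrate by parts in the middle term and apply Cauchy--Schwarz with a small parameter $\eta>0$:
\[
|2w(\nabla w\cdot\nabla\phi)|\le \eta|\nabla w|^2\phi+C|w|^2\frac{|\nabla\Psi|^2}{\Psi^2}\phi
\le \eta|\nabla w|^2\phi+Ca(x)|w|^2\Psi^{-1}\phi,
\]
using $|\nabla\Psi|^2/(a\Psi)\le C$ from \eqref{psi.ratio}. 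This produces a negative $-(2-\eta)|\nabla w|^2\phi$ contribution and, since $\Psi^{-1}\le (t_0+t)^{-1}$, contributes an $a|w|^2$ term with one lost power of $(t_0+t)$ — precisely the third term in the conclusion.

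Next I would handle the product-rule term $j(t_0+t)^{j-1}\int(2w\partial_tw+a|w|^2)\phi\,dx$. The $a|w|^2$ piece is already in the target form. For the cross term I use
\[
2|w\partial_tw|\phi\le \eta\,a(x)|w|^2\phi+C_\eta\,a(x)^{-1}|\partial_tw|^2\phi,
\]
and the key identity
\[
a(x)^{-1}\phi
=\bigl(a(x)\Psi^{\alpha/(2-\alpha)}\bigr)^{-1}\Psi^{\lambda+(2k+1-j)\frac{\alpha}{2-\alpha}}
\le C\,\Psi^{\lambda+(2k+1-j)\frac{\alpha}{2-\alpha}},
\]
which is valid because Lemma \ref{lem_A_ep} gives $A_\varepsilon(x)\ge c\lr{x}^{2-\alpha}$, hence $\Psi^{\alpha/(2-\alpha)}\ge c\lr{x}^\alpha\gtrsim 1/a(x)$. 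Exactly the same identity absorbs the $a^{-1}|\partial_tw|^2$ generated by the $\partial_t\phi$-term of the first calculation, after noticing that $(t_0+t)^j\Psi^{-1}\le (t_0+t)^{j-1}$.

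The main obstacle, as usual in this higher-order bookkeeping, is keeping careful track of the exponents of $\Psi$ and the powers of $(t_0+t)$, in particular verifying the crucial uniform bound $\bigl(a(x)\Psi^{\alpha/(2-\alpha)}\bigr)^{-1}\le C$ that exchanges one factor of $a^{-1}$ for one factor of $\Psi^{\alpha/(2-\alpha)}$; this is what allows the $|\partial_t w|^2$-error to be expressed with the exponent $\lambda+(2k+1-j)\frac{\alpha}{2-\alpha}$ appearing in the last term of the conclusion. Once this is done, choosing $\eta$ small enough to absorb the $\eta|\nabla w|^2\phi$ and $\eta a|w|^2\phi$ terms into the main negative $-|\nabla w|^2\phi$ and into the $Ca|w|^2\phi$ error, respectively, and then taking $t_0\ge t_{j,2}$ large to dominate any leftover $\Psi^{-1}$-factors, yields the stated inequality.
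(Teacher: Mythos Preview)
Your proof is correct and follows essentially the same route as the paper's: differentiate, use $\partial_t^2w=\Delta w-a\partial_tw$, integrate $w\Delta w$ by parts, and control the cross terms $w(\nabla w\cdot\nabla\Psi)$ and $|w||\partial_tw|$ by Young's inequality together with the two pointwise bounds $|\nabla\Psi|^2/(a\Psi)\le C$ and $a(x)^{-1}\le C\Psi^{\alpha/(2-\alpha)}$. Your split $|2w(\nabla w\cdot\nabla\phi)|\le\eta|\nabla w|^2\phi+Ca|w|^2\Psi^{-1}\phi$ is in fact the clean way to land directly on the $(t_0+t)^{j-1}$ prefactor for the $a|w|^2$ term, and your identification of $a^{-1}\phi=(a\Psi^{\alpha/(2-\alpha)})^{-1}\Psi^{\lambda+(2k+1-j)\alpha/(2-\alpha)}$ is exactly the mechanism the paper encodes in its inequality $|w||\partial_tw|\le a|w|^2+C|\partial_tw|^2\Psi^{\alpha/(2-\alpha)}$. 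The closing remark about taking $t_0$ large is unnecessary here (choosing $\eta$ small already suffices), but it does no harm.
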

\begin{proof}
By integration by parts, we have
\begin{align*}%
	\frac{d}{dt} E_{0}^{(k,j)} [w] (t;t_0)
		&=
		2 (t_0+t)^j \int_{\Omega} (|\partial_t w|^2 + w \Delta w)
			\Psi^{\lambda +(2k-j)\frac{\alpha}{2-\alpha}} \,dx \\
		&\quad
			+ (t_0+t)^{j} \int_{\Omega} ( 2 w \partial_t w + a(x) |w|^2 )
				\left[ j(t_0+t)^{-1} + \left( \lambda + (2k-j)\frac{\alpha}{2-\alpha} \right) \Psi^{-1} \right]
				\Psi^{\lambda +(2k-j)\frac{\alpha}{2-\alpha}} \,dx \\
		&\le 2 (t_0+t)^j \int_{\Omega} |\partial_t w|^2
				\Psi^{\lambda +(2k-j)\frac{\alpha}{2-\alpha}} \,dx \\
		&\quad
			- 2 (t_0+t)^j \int_{\Omega} |\nabla w|^2
				\Psi^{\lambda +(2k-j)\frac{\alpha}{2-\alpha}} \,dx \\
		&\quad
			-2 (t_0+t)^j \left( \lambda + (2k-j)\frac{\alpha}{2-\alpha} \right)
				\int_{\Omega} w (\nabla w \cdot \nabla \Psi)
					\Psi^{\lambda +(2k-j)\frac{\alpha}{2-\alpha}-1} \,dx \\
		&\quad
			+C(t_0+t)^{j-1} \int_{\Omega} (2|w| |\partial_t w| + a(x) |w|^2 )
				\Psi^{\lambda +(2k-j)\frac{\alpha}{2-\alpha}} \,dx \\
		&\le
			C (t_0+t)^j \int_{\Omega} |\partial_t w|^2
				\Psi^{\lambda +(2k-j)\frac{\alpha}{2-\alpha}} \,dx \\
		&\quad
			- (t_0+t)^j \int_{\Omega} |\nabla w|^2
				\Psi^{\lambda +(2k-j)\frac{\alpha}{2-\alpha}} \,dx \\
		&\quad
			+C (t_0+t)^{j-1}
				\int_{\Omega} a(x) |w|^2
					\Psi^{\lambda +(2k-j)\frac{\alpha}{2-\alpha}} \,dx \\
		&\quad
			+C(t_0+t)^{j-1} \int_{\Omega} |\partial_t w|^2
				\Psi^{\lambda +(2k+1-j)\frac{\alpha}{2-\alpha}} \,dx.
\end{align*}%
Here, we have used the following inequalities
\begin{align*}%
	\left| w (\nabla w \cdot \nabla \Psi) \right|
		&\le \eta |\nabla w|^2 \frac{|\nabla \Psi|^2}{a(x) \Psi}
			+ C a(x) |w|^2 \Psi, 
			\quad
	|w|\,|\pa_t w|\leq a(x)|w|^2+C|\pa_tw|^2\Psi^{\frac{\alpha}{2-\alpha}}
\end{align*}%
and \eqref{psi.ratio}.
This completes the proof.
\end{proof}

\begin{lemma}\label{lem.ek.j}
Under the assumptions on Lemma \ref{lem.e.w},
for each
$j=1,\ldots,2k$,
there exist constants
$t_{j,*} \ge \max\{ t_{j,1}, t_{j,2} \}$
and
$\nu_{k,j} = \nu_{k,j} (k,j,N,\alpha,\varepsilon,\lambda,t_0) > 0$
such that for any $t_0 \ge t_{j,*}$, we have
\begin{align*}
	E^{(k,j)}[w] (t;t_0, \nu_{k,j}) &\ge
		c (t_0+t)^j \int_{\Omega}
			(|\nabla w(x,t)|^2 + |\partial_t w(x,t)|^2)
				\Psi(x,t;t_0)^{\lambda+ (2k+1-j)\frac{\alpha}{2-\alpha}} \,dx \\
		&\quad + c (t_0+t)^j \int_{\Omega} a(x) | w(x,t)|^2
				\Psi(x,t;t_0)^{\lambda+(2k-j)\frac{\alpha}{2-\alpha}} \,dx
\end{align*}
and
\begin{align*}
	&E^{(k,j)} [w] (t;t_0, \nu_{k,j})
		+ \int_0^t (t_0+\tau)^j  \int_{\Omega}
			|\nabla w(x,\tau)|^2
				\Psi(x,\tau;t_0)^{\lambda+(2k-j)\frac{\alpha}{2-\alpha}} \,dxd\tau \\
	&\quad + \int_0^t (t_0+\tau)^j \int_{\Omega}
			a(x) |\partial_t w(x,\tau)|^2
				\Psi(x,\tau;t_0)^{\lambda+(2k+1-j)\frac{\alpha}{2-\alpha}}\,dxd\tau \\
	&\le C (I+K)
\end{align*}
for
$t\ge 0$
with some constants
$c = c(k,j,N,\alpha,\varepsilon,\lambda,t_0,\nu_{k,j}) > 0$
and
$C = C(k,j,N,\alpha,\varepsilon,\lambda,t_0,\nu_{k,0},\ldots,\nu_{k,j}) > 0$.
\end{lemma}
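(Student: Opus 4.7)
The plan is to prove Lemma \ref{lem.ek.j} by induction on $j$, using the case $j=0$ supplied by Lemma \ref{lem.ek.j0} as the base and assuming the full conclusion at level $j-1$ is already available for some fixed $j\in\{1,\ldots,2k\}$. The coercivity (first assertion) is handled by writing the cross term in $E_0^{(k,j)}[w]$ via Young's inequality as
\[
2|w||\pa_tw|\leq \eta\,a(x)|w|^2+C\eta^{-1}a(x)^{-1}|\pa_tw|^2,
\]
then invoking $a(x)^{-1}\leq C\Psi^{\alpha/(2-\alpha)}$ (a consequence of $A_\ep(x)\geq c_\ep\lr{x}^{2-\alpha}$ from Lemma \ref{lem_A_ep} together with \eqref{ass_a}). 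Choosing $\eta$ small and then $\nu_{k,j}$ small relative to $\eta$ ensures the lower bound.

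For the space-time estimate, form
\[
\frac{d}{dt}E^{(k,j)}[w](t;t_0,\nu_{k,j})=\frac{d}{dt}E_1^{(k,j)}[w](t;t_0)+\nu_{k,j}\frac{d}{dt}E_0^{(k,j)}[w](t;t_0)
\]
and substitute Lemmas \ref{lem.ek.j.1} and \ref{lem.ek.j.0}. The two $(t_0+t)^j$-dissipation terms that survive are
\[
-(t_0+t)^j\int_{\Omega}a(x)|\pa_tw|^2\Psi^{\lambda+(2k+1-j)\frac{\alpha}{2-\alpha}}\,dx-\nu_{k,j}(t_0+t)^j\int_{\Omega}|\nabla w|^2\Psi^{\lambda+(2k-j)\frac{\alpha}{2-\alpha}}\,dx.
\]
The single $(t_0+t)^j$-error term $C\nu_{k,j}(t_0+t)^j\int|\pa_tw|^2\Psi^{\lambda+(2k-j)\alpha/(2-\alpha)}\,dx$ produced by Lemma \ref{lem.ek.j.0} is absorbed into the first dissipation via $|\pa_tw|^2\Psi^{\lambda+(2k-j)\alpha/(2-\alpha)}\leq C a(x)|\pa_tw|^2\Psi^{\lambda+(2k+1-j)\alpha/(2-\alpha)}$ once $\nu_{k,j}$ is taken small enough. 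Every remaining error term carries the slower factor $(t_0+t)^{j-1}$.

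Integrating on $[0,t]$ and using the coercivity, the residual right-hand side reduces to $E^{(k,j)}[w](0;t_0,\nu_{k,j})\leq CI$ plus the time-integrated residues
\begin{gather*}
\int_0^t(t_0+\tau)^{j-1}\int_{\Omega}(|\nabla w|^2+|\pa_tw|^2)\Psi^{\lambda+(2k+1-j)\frac{\alpha}{2-\alpha}}\,dx\,d\tau,\\
\int_0^t(t_0+\tau)^{j-1}\int_{\Omega}a(x)|w|^2\Psi^{\lambda+(2k-j)\frac{\alpha}{2-\alpha}}\,dx\,d\tau.
\end{gather*}
The $|\nabla w|^2$-part is precisely the space-time $|\nabla w|^2$-bound provided by the inductive hypothesis at level $j-1$ (weight $\lambda+(2k-(j-1))\alpha/(2-\alpha)$ matches). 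The $|\pa_tw|^2$-part is bounded by the level-$(j-1)$ bound on $a(x)|\pa_tw|^2\Psi^{\lambda+(2k+2-j)\alpha/(2-\alpha)}$ after the same elementary estimate. The $a(x)|w|^2$-part is controlled directly by $K$: taking the summation index $i=j-1\in\{0,\ldots,2k-1\}$ in the definition of $K$ supplies the weight $\Psi^{\lambda+(2k+j-2)\alpha/(2-\alpha)}$, and since $(2k+j-2)-(2k-j)=2j-2\geq 0$ together with $\Psi\geq 1$ and $\alpha\geq 0$, the required smaller weight is dominated.

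The principal obstacle is the delicate bookkeeping of $\Psi$-weights and $(t_0+t)$-powers across the induction: at each step one must verify that every Schwarz splitting lowers the $(t_0+t)$-power by exactly one and that the resulting error either fits inside the space-time bound just established at level $j-1$ or inside the assumption $K<\infty$, where the mismatch between spatial weight powers is always non-negative (so harmless under $\Psi\geq 1$). The constants $\nu_{k,j}$ and the threshold $t_{j,*}$ are chosen only after all these comparisons have been lined up.
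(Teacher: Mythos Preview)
Your proposal is correct and follows essentially the same route as the paper: induction on $j$ with base case Lemma~\ref{lem.ek.j0}, combining Lemmas~\ref{lem.ek.j.1} and~\ref{lem.ek.j.0} with $\nu_{k,j}$ small, absorbing the $(t_0+t)^j\int|\partial_tw|^2\Psi^{\lambda+(2k-j)\alpha/(2-\alpha)}$ term via $1\le C a(x)\Psi^{\alpha/(2-\alpha)}$, and then bounding the $(t_0+t)^{j-1}$ residues by the level-$(j-1)$ space-time bounds together with the assumption $K<\infty$. Your weight comparisons for the three residual integrals are exactly the ones the paper uses (the paper is simply terser about them).
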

\begin{proof}
The first assertion is obvious by taking $\nu_{k,j}$ sufficiently small.
We prove the second assertion for $j=1,\ldots,2k$ in order.
By Lemmas \ref{lem.ek.j.1} and \ref{lem.ek.j.0},
noting that 
\[
|\pa_tw|^2\Psi(x,\tau;t_0)^{\lambda+(2k+1-j)\frac{\alpha}{2-\alpha}}
\leq 
Ca|\pa_tw|^2\Psi(x,\tau;t_0)^{\lambda+(2k+1-(j-1))\frac{\alpha}{2-\alpha}}
\]
(and retaking $\nu_{k,j}$ if needed), we have
\begin{align*}
	&E^{(k,j)} [w] (t;t_0, \nu_{k,j})
		+ \int_0^t (t_0+\tau)^j \int_{\Omega}
			|\nabla w(x,\tau)|^2
				\Psi(x,\tau;t_0)^{\lambda+(2k-j)\frac{\alpha}{2-\alpha}} \,dxd\tau \\
	&\quad + \int_0^t (t_0+\tau)^j \int_{\Omega}
			a(x) |\partial_t w(x,\tau)|^2
				\Psi(x,\tau;t_0)^{\lambda+(2k+1-j)\frac{\alpha}{2-\alpha}}\,dxd\tau \\
	&\le C E^{(k,j)}[w] (0;t_0, \nu_{k,j})
		+  C \int_0^t (t_0+\tau)^{j-1} \int_{\Omega} a(x) | w(x,\tau)|^2
			\Psi(x,\tau;t_0)^{\lambda + (2k-j) \frac{\alpha}{2-\alpha}} \,dx \\
	&\quad + C \int_0^t (t_0+\tau)^{j-1} \int_{\Omega} (|\nabla w|^2 + |\partial_t w|^2)
				\Psi(x,\tau;t_0)^{\lambda+(2k+1-j)\frac{\alpha}{2-\alpha}}\,dxd\tau
\end{align*}
We easily see that
$E^{(k,j)}[w] (0;t_0, \nu_{k,j}) \le CI$
with some
$C=C(k,j,N,\alpha,\varepsilon,\lambda,t_0,\nu_{k,j}) > 0$.
The second term in the right-hand side is bounded by
$C K$
thanks to the assumption \eqref{induction.assumption}.
Moreover, the third term in the right-hand side is bounded by
$C (I+K)$
because of the assertion for the case $j-1$
(when $j=1$ we apply Lemma \ref{lem.ek.j0}).
Continuing this argument from $j=1$ to $j=2k$, we reach the conclusion.
\end{proof}

\subsection*{Step 3: An estimate for $E_1^{(k,2k+1)}[w](t;t_0)$}

Finally, we show the boundedness of $E_1^{(k,2k+1)}[w](t;t_0)$,
which gives the desired decay for $w$.
\begin{lemma}\label{lem.ek.2k+1.1}
Under the assumption on Lemma \ref{lem.e.w}, there exists a constant
$t_{2k+1,1} \ge 1$
such that
for any
$t_0 \ge t_{2k+1,1}$ and $t \ge 0$, we have
\begin{align*}%
	E_1^{(k,2k+1)} [w] (t;t_0)
		+ \int_0^t (t_0+\tau)^{2k+1} \int_{\Omega} a(x) |\partial_t w(x,\tau)|^2 \Psi(x,\tau;t_0)^{\lambda}\,dx
		&\le
		C(I+K)
\end{align*}%
with some constant
$C=C(k,N,\alpha,\varepsilon,\lambda,t_0,\nu_{k,0},\ldots,\nu_{k,2k}) >0$.
\end{lemma}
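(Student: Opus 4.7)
The plan is to mimic the structure of Lemma \ref{lem.ek.j.1}, but integrate the resulting differential inequality directly in time, since no companion $E_0^{(k,2k+1)}$ lemma is available (the auxiliary functional $E_0^{(k,j)}$ is defined only up to $j=2k$). The main work is a direct computation of $\frac{d}{dt} E_1^{(k,2k+1)}[w](t;t_0)$ followed by an application of the bounds already harvested in Step 2 of Lemma \ref{lem.e.w} (specifically the $j=2k$ case of Lemma \ref{lem.ek.j}).

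First, differentiating $E_1^{(k,2k+1)}[w](t;t_0)=(t_0+t)^{2k+1}\int_\Omega(|\nabla w|^2+|\pa_tw|^2)\Psi^\lambda\,dx$, using $\pa_t\Psi=1$ and the equation $\pa_t^2w=\Delta w-a(x)\pa_tw$, and integrating by parts in the spatial term $\int\nabla\pa_tw\cdot\nabla w\,\Psi^\lambda\,dx$, I obtain
\[
\frac{d}{dt}E_1^{(k,2k+1)}[w]=-2(t_0+t)^{2k+1}\!\int_\Omega a(x)|\pa_tw|^2\Psi^\lambda\,dx-2\lambda(t_0+t)^{2k+1}\!\int_\Omega\pa_tw(\nabla w\cdot\nabla\Psi)\Psi^{\lambda-1}\,dx+R(t),
\]
where $R(t)$ collects the $(2k+1)(t_0+t)^{2k}\int(|\nabla w|^2+|\pa_tw|^2)\Psi^\lambda\,dx$ term and the $(t_0+t)^{2k+1}\lambda\int(|\nabla w|^2+|\pa_tw|^2)\Psi^{\lambda-1}\,dx$ term. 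Since $\Psi^{-1}\le(t_0+t)^{-1}$, both contributions to $R(t)$ are bounded by $C(t_0+t)^{2k}\int_\Omega(|\nabla w|^2+|\pa_tw|^2)\Psi^\lambda\,dx$.

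Next, the cross term is tamed by Schwarz and \eqref{psi.ratio}: for small $\eta>0$,
\[
\Bigl|2\lambda\pa_tw(\nabla w\cdot\nabla\Psi)\Psi^{\lambda-1}\Bigr|\le\eta\,a(x)|\pa_tw|^2\Psi^\lambda+C|\nabla w|^2\frac{|\nabla\Psi|^2}{a(x)\Psi}\Psi^{\lambda-1}\le\eta\,a(x)|\pa_tw|^2\Psi^\lambda+C|\nabla w|^2\Psi^{\lambda-1}.
\]
Absorbing $\eta$ into the main negative term and using $\Psi^{\lambda-1}\le(t_0+t)^{-1}\Psi^\lambda$ once more, I arrive at
\[
\frac{d}{dt}E_1^{(k,2k+1)}[w]\le-(t_0+t)^{2k+1}\!\int_\Omega a(x)|\pa_tw|^2\Psi^\lambda\,dx+C(t_0+t)^{2k}\!\int_\Omega(|\nabla w|^2+|\pa_tw|^2)\Psi^\lambda\,dx.
\]

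Finally, integrating on $[0,t]$, the initial value $E_1^{(k,2k+1)}[w](0;t_0)$ is dominated by $CI$ (since $\Psi(x,0;t_0)^\lambda\le\Psi(x,0;t_0)^{\lambda+(2k+1)\alpha/(2-\alpha)}$ thanks to $\Psi\ge t_0\ge1$ and $\alpha\in[0,1)$), and the remaining space-time integral must be bounded by $C(I+K)$ using Lemma \ref{lem.ek.j} with $j=2k$. The $|\nabla w|^2\Psi^\lambda$ part is direct since $\lambda+(2k-j)\alpha/(2-\alpha)=\lambda$ at $j=2k$. The $|\pa_tw|^2\Psi^\lambda$ part needs one extra observation: since $\alpha\in[0,1)$, one has $a(x)\Psi^{\alpha/(2-\alpha)}\ge c\langle x\rangle^{-\alpha}\langle x\rangle^\alpha\ge c>0$, so $|\pa_tw|^2\Psi^\lambda\le C a(x)|\pa_tw|^2\Psi^{\lambda+\alpha/(2-\alpha)}$, which is again controlled by Lemma \ref{lem.ek.j} at $j=2k$. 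The main obstacle—really the only delicate point—is this last pointwise weight comparison, but it is precisely where the standing hypothesis $\alpha\in[0,1)$ pays off, and the rest is routine bookkeeping of exponents.
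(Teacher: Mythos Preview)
Your proof is correct and follows essentially the same route as the paper: derive the differential inequality for $E_1^{(k,2k+1)}[w]$ exactly as in Lemma~\ref{lem.ek.j.1}, integrate in time, and bound the resulting space--time integral via Lemma~\ref{lem.ek.j} with $j=2k$, using the pointwise comparison $|\pa_tw|^2\Psi^\lambda\le C a(x)|\pa_tw|^2\Psi^{\lambda+\frac{\alpha}{2-\alpha}}$. The paper states the differential inequality without rederiving it, but your expanded computation and final bookkeeping match the paper's argument in every essential respect.
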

\begin{proof}
We first have
\begin{align*}%
	\frac{d}{dt} E_1^{(k,2k+1)} [w](t;t_0)
		&\le - (t_0+t)^{2k+1} \int_{\Omega} a(x) |\partial_t w(x,t)|^2
			\Psi(x,t;t_0)^{\lambda}\,dx \\
		&\quad + C (t_0+t)^{2k} \int_{\Omega} (|\nabla w(x,t)|^2 + |\partial_t w(x,t)|^2 )
			\Psi(x,t;t_0)^{\lambda}\,dx,
\end{align*}%
which is proved by the same way as Lemma \ref{lem.ek.j.1} and we omit the detail.
Integrating the above on $[0,t]$, we have
\begin{align*}%
	&E_1^{(k,2k+1)} [w] (t;t_0)
		+ \int_0^t (t_0+\tau)^{2k+1} \int_{\Omega} a(x) |\partial_t w|^2
			\Psi^{\lambda}\,dxd\tau \\
	&\le E_1^{(k,2k+1)} [w] (0;t_0)
		+ C \int_0^t (t_0+\tau)^{2k} \int_{\Omega} (|\nabla w|^2 + |\partial_t w|^2 )
			\Psi^{\lambda}\,dxd\tau.
\end{align*}%
The right-hand side is bounded by
$C (I+K)$
thanks to Lemma \ref{lem.ek.j} and 
the inequality
$|\pa_tw|^2\Psi^\lambda\leq Ca(x)|\pa_tw|^2\Psi^{\lambda+\frac{\alpha}{2-\alpha}}$.
The proof is complete.
\end{proof}

Finally, combining Lemmas \ref{lem.ek.j0}, \ref{lem.ek.j}, \ref{lem.ek.2k+1.1},
we have the assertion of Lemma \ref{lem.e.w}.
\end{proof}

\subsection{Diffusion phenomena}

To close this paper, we finally consider the 
asymptotic profile of solutions to \eqref{dw}. 
From the viewpoint of weighted energy estimates proved in the previous subsection, 
we expect that the solution of \eqref{dw} behaves like 
the one of \eqref{de} at $t\to \infty$. 

The following is the statement for diffusion phenomena for the problem \eqref{dw}.
The assertion for the case $a(x)\not \equiv |x|^{-\alpha}$ is 
an improvement of \cite[Theorem 1.2]{SoWa19_CCM} in which the spatial case $a(x)=|x|^{-\alpha}$ is studied.
\begin{theorem}\label{thm.5.20}
Let
$(u_0, u_1) \in (H^3(\Omega)\cap H^1_0(\Omega)) \times (H^2(\Omega) \cap H^1_0(\Omega))$
satisfies the compatibility condition of order $1$.
Let
$\delta \in (0,1/2), \varepsilon \in(0,1), \lambda \in (0, (1-2\delta)\gamma_{\varepsilon})$
with $\gamma_{\varepsilon}$ defined by \eqref{gammatilde}.
Assume
\begin{align*}
	I_{1} = \sum_{\ell=0}^1
		\left[ \int_{\Omega} (|\nabla u_{\ell}(x)|^2 +| u_{\ell+1} (x)|^2 )
		\Psi(x,0;t_0)^{\lambda + (2\ell+1) \frac{\alpha}{2-\alpha}} \,dx
		+ \int_{\Omega} a(x) |u_{\ell}(x)|^2
			\Psi(x,0;t_0)^{\lambda + 2\ell \frac{\alpha}{2-\alpha}} \,dx \right] < \infty.
\end{align*}
Then, we have the asymptotic estimate
\begin{align*}
	\| u(t) - T(t)[u_0+a^{-1}u_1] \|_{L^2_{d\mu}} \le C (1+t)^{-\lambda/2} \eta(t) \sqrt{I_1},
	\quad t\geq 1, 
\end{align*}
where
\begin{align*}
	\eta (t) = \begin{cases}
		(1+t)^{-\frac{2(1-\alpha)}{2-\alpha}} \sqrt{\log (2+t)}
			&\text{if}\ \lambda \in [\frac{2\alpha}{2-\alpha}, \frac{N-\alpha}{2-\alpha}),
			\\[3pt]
		(1+t)^{-\frac{2(1-\alpha)}{2\alpha} \lambda}
			&\text{if}\ \lambda \in (0, \frac{2\alpha}{2-\alpha}).
		\end{cases}
\end{align*}
\end{theorem}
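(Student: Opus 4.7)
The strategy is to analyse the error $w(t) := u(t) - T(t)(u_0 + a^{-1}u_1)$. Subtracting the heat equation $a\pa_t v = \Delta v$ from the wave equation rewritten as $a\pa_t u = \Delta u - \pa_t^2 u$, one obtains
\[
a(x)\pa_t w - \Delta w = -\pa_t^2 u, \qquad w(0) = -a(x)^{-1}u_1,
\]
together with the homogeneous Dirichlet boundary condition. Duhamel's formula for the semigroup $T(t)$ generated by $L=a^{-1}\Delta$ (Lemma \ref{lem.4.1}) gives
\[
w(t) = -T(t)(a^{-1}u_1) - \int_0^t T(t-s)\bigl(a^{-1}\pa_s^2 u(s)\bigr)\,ds.
\]
The decisive manipulation is an integration by parts in $s$: since $a$ is time-independent, $a^{-1}\pa_s^2 u = \pa_s(a^{-1}\pa_s u)$, and using $\pa_s T(t-s) = -LT(t-s)$ one gets
\[
w(t) = -a(x)^{-1}\pa_t u(t) + \int_0^t L\,T(t-s)\bigl(a^{-1}\pa_s u(s)\bigr)\,ds.
\]
This trades the unfavourable factor $\pa_s^2 u$ for $\pa_s u$, which enjoys much better weighted estimates.

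For the boundary term I would observe that
\[
\|a^{-1}\pa_t u(t)\|_{L^2_{d\mu}}^2 = \int_\Omega a(x)^{-1}|\pa_t u(x,t)|^2\,dx \approx \int_\Omega |\pa_t u|^2\,\Psi(x,t;t_0)^{\frac{\alpha}{2-\alpha}}\,dx,
\]
and apply Theorem \ref{thm.ek} with $k=1$ to $\pa_t u$, whose initial data $(u_1,u_2)$ have weighted energy precisely $I_1$: choosing the index $j\in\{0,\dots,3\}$ that maximises $(1+t)^{-j}$ while leaving enough spatial weight to dominate $\Psi^{\alpha/(2-\alpha)}$ yields $\|a^{-1}\pa_t u(t)\|_{L^2_{d\mu}} \le C(1+t)^{-\lambda/2}\eta(t)\sqrt{I_1}$. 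For the Duhamel integral, I would combine the analyticity bound $\|L\,T(\tau)\|_{L^2_{d\mu}\to L^2_{d\mu}} \le \tau^{-1}$ from Lemma \ref{lem.4.1} with the weighted $L^2$-decay of Proposition \ref{prop.4.2}, splitting the integral at $s=t/2$. On $s\in(t/2,t)$ the singular kernel $(t-s)^{-1}$ is absorbed by the fast weighted decay of $a^{-1}\pa_s u$ furnished by Theorem \ref{thm.ek}; on $s\in(0,t/2)$, Proposition \ref{prop.4.2} converts a polynomial spatial weight $\lr{x}^{\sigma}$ on $a^{-1}\pa_s u(s)$ into temporal decay $(1+t-s)^{-\sigma/(2-\alpha)}$, with $\sigma$ chosen maximal subject to the weighted-energy bounds on $\pa_s u$.

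The main obstacle is the careful bookkeeping of weights and the identification of the transition at $\lambda = \tfrac{2\alpha}{2-\alpha}$. Converting the hybrid $\Psi$-weights of Theorem \ref{thm.ek}, for which $\Psi(x,s;t_0) \approx 1+s+\lr{x}^{2-\alpha}$, into the purely spatial weights $\lr{x}^{\sigma}$ required by Proposition \ref{prop.4.2} forces a case distinction between the regions $s\lesssim \lr{x}^{2-\alpha}$ and its complement, and this is precisely what fixes the maximal admissible $\sigma$ in each regime of $\lambda$. The threshold $\lambda=\tfrac{2\alpha}{2-\alpha}$ is the value at which this maximal $\sigma$ saturates the spatial-weight assumption on $a^{-1}\pa_s u$: above the threshold a borderline time integral of the form $\int_0^{t/2}(1+s)^{-1}\,ds\sim \log t$ produces the $\sqrt{\log(2+t)}$ factor, while below it the weight limitation forces the slower algebraic rate $(1+t)^{-2(1-\alpha)\lambda/(2\alpha)}$.
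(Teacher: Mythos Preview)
Your integration-by-parts formula
\[
w(t) = -a^{-1}\pa_t u(t) + \int_0^t L\,T(t-s)\bigl(a^{-1}\pa_s u(s)\bigr)\,ds
\]
is taken over the full interval $[0,t]$, and this is where the argument breaks. The analyticity bound $\|LT(\tau)\|\le \tau^{-1}$ is \emph{not} integrable at $\tau=0$, so on $s\in(t/2,t)$ you face
\[
\int_{t/2}^{t}\frac{1}{t-s}\,\|a^{-1}\pa_s u(s)\|_{L^2_{d\mu}}\,ds,
\]
and the weighted decay of $\pa_s u$ from Theorem~\ref{thm.ek} concerns large $s$, not the vanishing of $t-s$; for fixed $t$ the norm is bounded below near $s=t$ and the integral diverges. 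Nothing in the available energy estimates can ``absorb'' this logarithmic singularity.

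The paper avoids this by integrating by parts \emph{only} on $[0,t/2]$, which produces the three pieces
\[
J_1=-\int_{t/2}^{t}T(t-s)[a^{-1}\pa_s^2u(s)]\,ds,\quad
J_2=-T(t/2)[a^{-1}\pa_tu(t/2)],\quad
J_3=-\int_{0}^{t/2}LT(t-s)[a^{-1}\pa_su(s)]\,ds.
\]
On $J_1$ the contraction of $T(t-s)$ (no $(t-s)^{-1}$) is used together with the $k=1$ energy bound for $\pa_t^2u$; on $J_3$ the operator $LT(t-s)$ is harmless since $t-s\ge t/2$, and Proposition~\ref{prop.4.2} supplies the extra temporal decay. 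The boundary term is $J_2$, evaluated at $t/2$ rather than at $t$. Your boundary term $a^{-1}\pa_tu(t)$ is fine, but the Duhamel integral must retain $\pa_s^2u$ near $s=t$; this is the missing idea.
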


\begin{proof}
First, by the same argument as \cite[Lemma 5.1]{SoWa19_CCM}, we can show that
$u(t)$ belongs to $D(L)$ defined in Lemma \ref{lem.4.1} and
$a(x)^{-1} \partial_t^2 u \in L^{\infty}(0,\infty; L^2_{d\mu})$.
Thus, rewriting the equation \eqref{dw} as
\begin{align*}
	\partial_t u - a(x)^{-1} \Delta u = - a(x)^{-1} \partial_t^2 u,
\end{align*}
and using the semigroup $T(t)$ defined in Lemma \ref{lem.4.1},
we have the integral formula
\begin{align*}
	u(t) = T(t)u_0 - \int_0^t T(t-s) [ a^{-1} \partial_t^2 u(s) ]\,ds
\end{align*}
(see \cite[Lemma 4.1]{SoWa16_JDE} for the detail).
Moreover, by integration by parts, we deduce
\begin{align*} 
	u(t)
	&=T(t)[u_0+a^{-1}u_1]
		- \int_{\frac{t}{2}}^tT(t-s)[a^{-1}\pa_t^2u(s)]\,ds \\
	&\quad - T(t/2)[a^{-1}\pa_tu(t/2)]
		- \int_{0}^{\frac{t}{2}}LT(t-s)[a^{-1}\pa_tu(s)]\,ds
\end{align*}
(see \cite[p.5715]{SoWa16_JDE} for the detail).
Therefore, we obtain the representation
\begin{align*}
	u(t) - T(t)[u_0+a^{-1}u_1]
		= J_1(t) + J_2(t) + J_3(t),
\end{align*}
where
\begin{align*}
	J_1(t) &= - \int_{\frac{t}{2}}^tT(t-s)[a^{-1}\pa_t^2u(s)]\,ds,\\
	J_2(t) &= - T(t/2)[a^{-1}\pa_tu(t/2)],\\
	J_3(t) &= - \int_{0}^{\frac{t}{2}}LT(t-s)[a^{-1}\pa_tu(s)]\,ds.
\end{align*}
Hence, it suffices to estimate $J_1, J_2$ and $J_3$ term by term.
In what follows, we shall frequently apply Theorem \ref{thm.ek} with $k=1$.
In the rest of the proof, we divide the proof into two cases
$\lambda \in [\frac{2\alpha}{2-\alpha}, \frac{N-\alpha}{2-\alpha})$
and
$\lambda \in (0, \frac{2\alpha}{2-\alpha})$.

{\bf (The case $\lambda \in [\frac{2\alpha}{2-\alpha}, \frac{N-\alpha}{2-\alpha})$)}
By the Schwarz inequality, Lemma \ref{lem.4.1} and
Theorem \ref{thm.ek} with the bound of
$\int_0^t (t_0+s)^3 \int_{\Omega} a(x) |\partial_t^2 u(x,s)|^2 \Psi(x,s;t_0)^{\lambda} \,dxds$,
we deduce
\begin{align*}
	\| J_1(t) \|_{L^2_{d\mu}}^2
	&\le \frac{t}{2} \int^t_{t/2} \left\| a^{-1} [ \sqrt{a} \partial_t^2 u(s) ] \right\|_{L^2}^2 \,ds \\
	&\le C t \int^t_{t/2} \left\| \Psi^{\frac{\alpha}{2-\alpha}} [ \sqrt{a} \partial_t^2 u(s) ] \right\|_{L^2}^2 \,ds \\
	&\le C t \int^t_{t/2} (t_0+s)^{-3 - (\lambda-\frac{2\alpha}{2-\alpha})}
		\left[ (t_0+s)^3 \int_{\Omega} a(x) |\partial_t^2 u(x,s)|^2 \Psi(x,s;t_0)^{\lambda} \,dx \right] \,ds \\
	&\le C (t_0+t)^{-\lambda - \frac{4(1-\alpha)}{2-\alpha}} I_1,
\end{align*}
and hence,
\begin{align*}
	\| J_1(t) \|_{L^2_{d\mu}}
		&\le C (t_0+t)^{-\frac{\lambda}{2} - \frac{2(1-\alpha)}{2-\alpha}} \sqrt{I_1}.
\end{align*}
Applying Lemma \ref{lem.4.1} and Theorem \ref{thm.ek} with
the bound of
$(t_0+t)^2 \int_{\Omega} a(x) |\partial_t u(x,t)|^2 \Psi(x,t;t_0)^{\lambda} \,dx$,
we have
\begin{align*}
\|J_2(t)\|_{L^2_{d\mu}}
&\leq \| a^{-1} [ \pa_tu(t/2)] \|_{L^2}
\\
&\leq 
C \| \Psi^{\frac{\alpha}{2-\alpha}} [\sqrt{a} \pa_tu(t/2)] \|_{L^2}
\\
&\leq 
	C(t_0+t/2)^{-\frac{1}{2}( \lambda - \frac{2\alpha}{2-\alpha}) - 1}
		\left[ (t_0+t/2)^2 \int_{\Omega} a(x) |\partial_t u(x,t/2)|^2 \Psi(x,t/2;t_0)^{\lambda}\,dx \right]^{\frac{1}{2}}
\\
&\leq 
	C (t_0+t)^{-\frac{\lambda}{2} - \frac{2(1-\alpha)}{2-\alpha}} \sqrt{I_1}.
\end{align*}
By Proposition \ref{prop.4.2} with $\sigma=\frac{\lambda}{2(2-\alpha)}-\alpha$, the Schwarz inequality
and Theorem \ref{thm.e1}, we also estimate
\begin{align*}
&\|J_3(t) \|_{L^2_{d\mu}}
\\
&\leq
	\int_0^{t/2} \left\| LT\left(\frac{t-s}{2}\right)T\left(\frac{t-2s}{4}\right)T\left(\frac{t}{4}\right)
		[a^{-1}\pa_tu(s)]\right\|_{L^2_{d\mu}}\,ds
\\
&\leq 
\int_0^{t/2}(t-s)^{-1}\left\|T\left(\frac{t}{4}\right)[a^{-1}\pa_tu(s)]\right\|_{L^2_{d\mu}}\,ds 
\\
&\leq 
C\int_0^{t/2}(t-s)^{-1}\left(t_0+\frac{t}{4}\right)^{-(\frac{\lambda}{2} - \frac{\alpha}{2-\alpha})}
\left\|\lr{x}^{\frac{(2-\alpha)\lambda}{2}}[\sqrt{a}\pa_tu(s)]\right\|_{L^2}\,ds 
\\
&\leq 
C(t_0+t)^{-(\frac{\lambda}{2} - \frac{\alpha}{2-\alpha})-1}
	\left( \int_0^{t/2} (t_0+s)^{-1}\,ds \right)^{\frac{1}{2}}
	\left[ \int_0^{t/2} (t_0+s) \int_{\Omega} a(x) |\partial_t u(x,s)|^2 \Psi(x,s;t_0)^{\lambda} \,dx ds \right]^{\frac{1}{2}}
\\
&\leq 
C(t_0+t)^{-\frac{\lambda}{2} - \frac{2(1-\alpha)}{(2-\alpha)}} \sqrt{ \log(t_0+t) } \sqrt{I_1}.
\end{align*}

{\bf (The case $\lambda \in (0, \frac{2\alpha}{2-\alpha})$)}
In this case we shall use the interpolation estimates
\begin{align}
\label{inter1}
	(t_0+t)^{2(1-\theta)} \int_{\Omega} a(x)|\partial_t u(x,t)|^2 \Psi(x,t;t_0)^{\lambda + \frac{2\alpha}{2-\alpha} \theta } \,dx
	\le C  I_1,\\
\label{inter2}
	\int_0^t (t_0+s)^{3(1-\theta)} \int_{\Omega} a(x) |\partial_t^2 u(x,s)|^2
		\Psi(x,s;t_0)^{\lambda + \frac{3\alpha}{2-\alpha} \theta} \,dx ds \le C I_1
\end{align}
for
$\theta \in [0,1]$,
which follow from Theorem \ref{thm.ek} with $k=1$ and the H\"{o}lder inequality.

For $J_1(t)$, applying \eqref{inter2} with
$\theta = \frac{2}{3} - \frac{2-\alpha}{3\alpha} \lambda$, we compute
\begin{align*}
	\| J_1(t) \|_{L^2_{d\mu}}^2
		&\le \frac{t}{2} \int_{t/2}^t \| a^{-1} [ \sqrt{a} \partial_t^2 u ] \|_{L^2}^2 \,ds \\
		&\le C t \int_{t/2}^t \| \Psi^{\frac{\alpha}{2-\alpha}} [ \sqrt{a} \partial_t^2 u ] \|_{L^2}^2 \,ds \\
		&\le C t \int_{t/2}^t \int_{\Omega} a(x) |\partial_t^2 u(x,s)|^2
				\Psi(x,s;t_0)^{\lambda + (\frac{2\alpha}{2-\alpha} - \lambda)} \,dx ds \\
		&\le C t \int_{t/2}^t 
			(t_0+s)^{-3(1-\theta)}
			\cdot (t_0+s)^{3(1-\theta)} 
			\int_{\Omega} a(x) |\partial_t^2 u(x,s)|^2
			\Psi(x,s;t_0)^{\lambda + \frac{3\alpha}{2-\alpha}\theta } \,dx ds \\
		&\le C t (t_0+t/2)^{-3(1-\theta)} I_1 \\
		&\le C (t_0+t)^{- \frac{2-\alpha}{\alpha} \lambda} I_1,
\end{align*}
and hence,
\begin{align*}
	\| J_1(t) \|_{L^2_{d\mu}} \le C  (t_0+t)^{- \frac{2-\alpha}{2\alpha} \lambda} \sqrt{I_1}.
\end{align*}
For $J_2(t)$, we have
\begin{align*}
\|J_2(t)\|_{L^2_{d\mu}}
&\leq \| a^{-1} [ \sqrt{a} \pa_tu(t/2)]\|_{L^2}
\\
&\leq 
C \| \Psi^{\frac{\alpha}{2-\alpha}} [\sqrt{a} \pa_tu(t/2)] \|_{L^2}
\\
&\leq 
	C \left[ \int_{\Omega} a(x) |\partial_t u(x,t/2)|^2
			\Psi(x,t/2;t_0)^{\lambda + (\frac{2\alpha}{2-\alpha} - \lambda) }\,dx \right]^{\frac{1}{2}}
\\
&\leq 
	C (t_0+t/2)^{-(1-\theta)}
			\left[
				(t_0+t/2)^{2(1-\theta)}
				\int_{\Omega} a(x) |\partial_t u(x,t/2)|^2
			\Psi(x,t/2;t_0)^{\lambda + \frac{2\alpha}{2-\alpha}\theta }\,dx \right]^{\frac{1}{2}}
\\
&\leq
	C(t_0+t)^{-\frac{2-\alpha}{2\alpha}\lambda} \sqrt{I_1},
\end{align*}
where we used \eqref{inter1} with $\theta = 1 - \frac{2-\alpha}{2\alpha} \lambda$.
Finally, for $J_3(t)$, we estimate
\begin{align*}
\|J_3(t) \|_{L^2_{d\mu}}
&\leq
	\int_0^{t/2} \left\| LT\left(\frac{t-s}{2}\right)T\left(\frac{t-2s}{4}\right)T\left(\frac{t}{4}\right)
		[a^{-1}\pa_tu(s)]\right\|_{L^2_{d\mu}}\,ds
\\
&\leq 
\int_0^{t/2}(t-s)^{-1} \left\| a^{-1} \pa_tu(s)\right\|_{L^2_{d\mu}}\,ds
\\
&\leq 
C t^{-1} \int_0^{t/2} \left\| \Psi^{\frac{\alpha}{2-\alpha}} [\sqrt{a} \pa_tu(s) ]\right\|_{L^2}\,ds
\\
&\leq
C t^{-1} \int_0^{t/2} \left[ \int_{\Omega} a(x) |\partial_t u(x,s)|^2
	\Psi(x,s;t_0)^{\lambda + (\frac{2\alpha}{2-\alpha} - \lambda)} \,dx \right]^{\frac{1}{2}} ds
\\
&\leq
C t^{-1} \int_0^{t/2} (t_0+s)^{-(1-\theta)} 
	\left[ (t_0+s)^{2(1-\theta)} \int_{\Omega} a(x) |\partial_t u(x,s)|^2
	\Psi(x,s;t_0)^{\lambda + \frac{2\alpha}{2-\alpha}\theta } \,dx \right]^{\frac{1}{2}} ds
\\
&\leq
C (t_0+t)^{-1 - (1-\theta) + 1} \sqrt{I_1}
\\
&\leq
C (t_0+t)^{-\frac{2-\alpha}{2\alpha} \lambda} \sqrt{I_1},
\end{align*}
where
$\theta = 1 - \frac{2-\alpha}{2\alpha} \lambda$.
Combining all the estimates, we have the desired estimate.
\end{proof}

\begin{proof}[Proof of Theorem \ref{thm:dif-pheno}]
For the initial data
$(u_0, u_1) \in (H^2(\Omega) \cap H^1_0(\Omega)) \times H^1_0(\Omega)$
satisfying
$E_0, E_0' < \infty$
with $\sigma \in (0,\frac{N-\alpha}{2})$,
we take an appropriate approximation
$\{ (u_{0n}, u_{1n}) \}_{n=1}^{\infty}$
satisfying
$\| (u_{0n}, u_{1n}) \|_{H^2 \times H^1} \le C \| (u_0, u_1) \|_{H^2 \times H^1}$
and the assumptions of Theorem \ref{thm.5.20}
with $\lambda = \frac{2\sigma}{2-\alpha}$
(see \cite[p.611]{SoWa18_ADE} for the detail).
Then, applying Theorem \ref{thm.5.20} to $(u_{0n}, u_{1n})$
and taking the limit $n\to \infty$,
we have the desired conclusion.
\end{proof}

\subsection*{Acknowledgements}

This work was supported by 
JSPS KAKENHI Grant Numbers 
JP18K134450, JP16K17625, JP18H01132.

\newpage


\begin{thebibliography}{30}

\bibitem{BeWo10}
R. Beals, R. Wong,
``Special functions,''
A graduate text. Cambridge Studies in Advanced Mathematics 126,
Cambridge University Press, Cambridge, 2010.

\bibitem{Ca58}C. Cattaneo, 
{\em Sur une forme de l'\'{e}quation de la chaleur \'{e}liminant
le paradoxe d'une propagation instantan\'{e}e},
 C. R. Acad. Sci. {\bf 247} (1958), 431--433.

\bibitem{ChHa03} R. Chill, A. Haraux,
{\em An optimal estimate for the difference of solutions of two abstract evolution equations},
J. Differential Equations {\bf 193} (2003), 385--395.


\bibitem{EN00}
K.-J. Engel, R. Nagel, 
``One-parameter semigroups for linear evolution equations,'' Graduate Texts in Mathematics, {\bf 194}, Springer-Verlag, New York, 2000. 

\bibitem{GTbook}
D. Gilbarg, N.S. Trudinger, 
	``Elliptic partial differential equations of second order,''
	Grundlehren der Mathematischen Wissenschaften, Vol.\ {\bf 224}. 
	Springer-Verlag, Berlin-New York, 1977. 
	
\bibitem{HsLi92} L. Hsiao, T. -P. Liu,
{\em Convergence to nonlinear diffusion waves for solutions of
a system of hyperbolic conservation laws with damping},
Comm.\ Math.\ Phys.\ {\bf 43} (1992), 599--605.


\bibitem{Ikawa}
M. Ikawa,
	{\it Mixed problems for hyperbolic equations of second order},
	J. Math.\ Soc.\ Japan {\bf 20} (1968), 580--608.
	
\bibitem{Ik02} R. Ikehata,
{\em Diffusion phenomenon for linear dissipative wave equations in an exterior domain},
J. Differential Equations {\bf 186} (2002), 633--651.
	
\bibitem{IKS16}
K. Ishige, T. Kawakami, M. Sier\.{z}\c{e}ga, 
	{\it Supersolutions for a class of nonlinear parabolic systems}, 
	J.\ Differential Equations {\bf 260} (2016), 6084--6107. 

\bibitem{IMSS16}
N. Ioku, G. Metafune, M. Sobajima, C. Spina, 
	{\it $L^p$-$L^q$ estimates for homogeneous operators}, 
	Commun.\ Contemp.\ Math.\ {\bf 18} (2016), no. 3, 1550037, 14 pp. 
	
\bibitem{Kar00} G. Karch,
{\em Selfsimilar profiles in large time asymptotics of solutions to damped wave equations},
Studia Math.\ {\bf 143} (2000), 175--197.


\bibitem{Levine91}
H.A. Levine,
	{\it Fujita type theorems for weakly coupled parabolic systems}, 
	Progress in partial differential equations: 
	elliptic and parabolic problems (Pont-\`a-Mousson, 1991), 43--55, 
	Pitman Res. Notes Math.\ Ser.\ {\bf 266}, Longman Sci.\ Tech., Harlow, 1992.

\bibitem{LS94}
G. Lu, B.D. Sleeman, 
	{\it Subsolutions and supersolutions to systems of parabolic equations with applications to generalized Fujita-type systems}, 
	Math.\ Methods Appl.\ Sci.\ {\bf 17} (1994), 1005--1016.
	
\bibitem{Ni03MathZ} K. Nishihara,
{\em $L^p$-$L^q$ estimates of solutions to the damped wave equation
in 3-dimensional space and their application},
Math.\ Z. {\bf 244} (2003), 631--649.

\bibitem{Nis16} H. Nishiyama,
{\em Remarks on the asymptotic behavior of the solution to damped wave equations},
J. Differential Equations {\bf 261} (2016) 3893--3940.
	
\bibitem{RaToYo09}
P. Radu, G. Todorova, B. Yordanov,
{\em Higher order energy decay rates for damped wave equations
with variable coefficients},
Discrete Contin.\ Dyn.\ Syst.\ Ser. S.\ {\bf 2} (2009), 609--629.

\bibitem{RaToYo10}P. Radu, G. Todorova, B. Yordanov,
{\em Decay estimates for wave equations with variable coefficients},
Trans. Amer. Math. Soc. {\bf 362} (2010), 2279--2299.

\bibitem{RaToYo16}P. Radu, G. Todorova, B. Yordanov,
{\em The generalized diffusion phenomenon and applications},
SIAM J.\ Math.\ Anal.\ {\bf 48} (2016), 174--203.

\bibitem{SQbook}
P. Quittner, P. Souplet, ``Superlinear parabolic problems,
Blow-up, global existence and steady states,'' Birkh\"{a}user Advanced Texts: Basler Lehrb\"{u}cher. 
[Birkh\"{a}user Advanced Texts: Basel Textbooks] Birkh\"{a}user Verlag, Basel, 2007.

\bibitem{So18a}
	M. Sobajima,
	{\it Global existence of solutions to semilinear damped wave equation with
	slowly decaying initial data in exterior domain},
	arXiv:1812.10664v1.

\bibitem{SoWa16_JDE}
	M. Sobajima, Y. Wakasugi, 
	{\it Diffusion phenomena for the wave equation with space-dependent damping 
	in an exterior domain},  
	 J.\ Differential Equations {\bf 261} (2016), 5690--5718.

\bibitem{SoWa17_AIMS}
	M. Sobajima, Y. Wakasugi, 
	{\it Remarks on an elliptic problem arising in weighted energy estimates 
	for wave equations with space-dependent damping term in an exterior domain}, 
	AIMS Mathematics, 2 (2017), 1--15. 

\bibitem{SoWa18_ADE}
	M. Sobajima, Y. Wakasugi, 
	{\it Diffusion phenomena for the wave equation with space-dependent damping 
	term growing at infinity}, 
	Adv.\ Differential Equations {\bf 23} (2018), 581--614.

\bibitem{SoWa19_CCM}
M. Sobajima, Y. Wakasugi, 
{\it Weighted energy estimates for wave equation with space-dependent damping term for slowly decaying initial data}, 
Commun.\ Contemp.\ Math.\ {\bf 21} (2019), no. 5, 1850035, 30 pp.
	
\bibitem{ToYo09} G. Todorova, B. Yordanov,
{\em Weighted $L^2$-estimates for dissipative wave equations with
variable coefficients},
J. Differential Equations {\bf 246} (2009), 4497--4518.

\bibitem{Ve58} P. Vernotte,
{\em Les paradoxes de la th\'{e}orie continue de l'\'{e}quation de la chaleur},
Comptes Rendus {\bf 246} (1958), 3154--3155.

\bibitem{Wa14JHDE}
Y. Wakasugi,
{\em On diffusion phenomena for the linear wave equation with space-dependent damping},
J.\ Hyp.\ Diff.\ Eq.\ {\bf 11} (2014), 795--819.

\bibitem{Weissler84}
E.B. Weissler, 
	{\it Single point blow-up for a semilinear initial value problem}, 
	J.\ Differential Equations {\bf 55} (1984), 204--224. 
	
\bibitem{YaMi00}H. Yang, A. Milani,
{\em On the diffusion phenomenon of quasilinear hyperbolic waves},
Bull. Sci. Math. {\bf124} (2000), 415--433.

\end{thebibliography}
\end{document}